\newcommand{\stkout}[1]{\ifmmode\text{\sout{\ensuremath{#1}}}\else\sout{#1}\fi}
\newtheorem{lemma}{Lemma}[section]
\newtheorem{theorem}{Theorem}[section]
\newtheorem{proposition}{Proposition}[section]
\newtheorem{corollary}{Corollary}[section]
\theoremstyle{definition}
\newtheorem{assumption}{Assumption}[section]
\theoremstyle{remark}
\newtheorem{remark}{Remark}[section]
\numberwithin{theorem}{section}
\numberwithin{equation}{section}
\crefname{section}{Section}{Sections}
\crefname{subsection}{Section}{Sections}
\crefname{condition}{Condition}{Conditions}
\crefname{hypothesis}{Hypothesis}{Conditions}
\crefname{assumption}{Assumption}{Assumptions}
\crefname{lemma}{Lemma}{Lemmas}
\crefname{fact}{Fact}{Facts}
\Crefname{figure}{Figure}{Figures}
\newcommand \Dotfill {\leavevmode \cleaders \hb@xt@ .6em{\hss .\hss }\hfill \kern \z@}\makeatother
\newcommand{\vertiii}[1]{{\left\vert\kern-0.25ex\left\vert\kern-0.25ex\left\vert #1 
    \right\vert\kern-0.25ex\right\vert\kern-0.25ex\right\vert}}
\newcommand{\lamstr}{\lambda^{\mspace{-2mu}*}}
\newcommand{\eom}{\mathcal{M}} 
\newcommand{\Ag}{{\mathcal{A}}}  
\newcommand{\bAg}{\bm{\mathcal{A}}}  
\newcommand{\sB}{{\mathscr{B}}}  
\newcommand{\Cc}{\mathcal{C}}   
\newcommand{\sF}{{\mathfrak{F}}}   
\newcommand{\cK}{{\mathcal{K}}}  
\newcommand{\cS}{{\mathcal{S}}}  
\newcommand{\cP}{{\mathcal{P}}}  
\newcommand{\cX}{{\mathcal{X}}}
\newcommand{\RR}{\mathds{R}}
\newcommand{\NN}{\mathds{N}}
\newcommand{\RN}{{\mathds{R}^{N}}}
\newcommand{\Rd}{{\mathds{R}^{d}}}
\DeclareMathOperator{\Exp}{\mathbb{E}}
\DeclareMathOperator{\Prob}{\mathbb{P}}
\newcommand{\D}{\mathrm{d}}
\newcommand{\Ind}{\mathds{1}}   
\newcommand{\Adm}{\mathfrak{U}}
\newcommand{\Sob}{{\mathscr W}}    
\newcommand{\Sobl}{{\mathscr W}_{\mathrm{loc}}} 
\newcommand{\df}{\coloneqq}
\DeclareMathOperator*{\trace}{Tr}
\DeclareMathOperator*{\dist}{dist}
\DeclareMathOperator*{\supp}{support}
\newcommand{\sorder}{{\mathfrak{o}}}
\newcommand{\grad}{\nabla}
\newcommand{\abs}[1]{\lvert#1\rvert}
\newcommand{\norm}[1]{\lVert#1\rVert}
\newcommand{\babs}[1]{\bigl\lvert#1\bigr\rvert}
\definecolor{dmagenta}{rgb}{.4,.1,.5}
\definecolor{dblue}{rgb}{.0,.0,.5}
\definecolor{mblue}{rgb}{.0,.0,.7}
\definecolor{ddblue}{rgb}{.0,.0,.4}
\definecolor{dred}{rgb}{.7,.0,.0}
\definecolor{dgreen}{rgb}{.0,.5,.0}
\definecolor{Eeom}{rgb}{.0,.0,.5}
\newcommand{\ttl}{\Large On ergodic control problem 
for viscous Hamilton--Jacobi\\[5pt] equations for weakly coupled
elliptic systems}
\author[Ari Arapostathis]{Ari Arapostathis$^\dag$}
\address{$^{\dag}$Department of ECE,
The University of Texas at Austin,
EER~7.824, Austin, TX~~78712}
\email{ari@utexas.edu}
\author[Anup Biswas]{Anup Biswas$^\ddag$}
\address{$^\ddag$Department of Mathematics,
Indian Institute of Science Education and Research,
Dr.\ Homi Bhabha Road, Pune 411008, India}
\email{anup@iiserpune.ac.in, prasun.roychowdhury@students.iiserpune.ac.in}
\author[Prasun Roychowdhury]{Prasun Roychowdhury$^\ddag$}
\begin{document}
\title[Viscous Hamilton--Jacobi equations for elliptic systems]
{\ttl}

\begin{abstract}
In this article, we study ergodic problems in the whole space $\RN$
for weakly coupled systems of viscous Hamilton-Jacobi equations with
coercive right-hand sides. The Hamiltonians are assumed to have a 
fairly general structure, and the switching rates need not be constant. 
We prove the existence of a critical value $\lamstr$ such that the ergodic 
eigenvalue problem has a solution for every $\lambda\leq\lamstr$ and
no solution for $\lambda>\lamstr$.
Moreover,
the existence and uniqueness of non-negative solutions corresponding to
the value $\lamstr$ are also established. We also exhibit the implication
of these results to the ergodic optimal control problems of 
controlled switching diffusions.
\end{abstract}
\keywords{Elliptic systems,
viscous Hamilton--Jacobi equations, infinitesimally invariant measures,
ergodic control of switching diffusion, quasi-monotone system.}

\subjclass[2010]{35J60, 35P30, 49L25}

\maketitle

\printnomenclature

\section{Introduction}
In this article we study the existence and uniqueness of solution
$(\bm{u}, \lambda)=(u_1, u_2, \lambda)$ to the equation
\begin{equation}\label{EP}\tag{EP}
\begin{split}
-\Delta u_1(x) + H_1(x,\grad u_1(x)) + \alpha_1(x)(u_1(x)-u_2(x)) &= f_1(x)-\lambda \quad \text{in}\; \RN,
\\
-\Delta u_2(x) + H_2(x,\grad u_2(x)) + \alpha_2(x)(u_2(x)-u_1(x)) &= f_2(x)-\lambda \quad \text{in}\; \RN,
\end{split}
\end{equation}
where $H_i:\RN\times\RN\to \RR$
\nomenclature[Aa]{$H_i$}{Hamiltonians in \eqref{EP}, see also \eqref{H1}-\eqref{H2}}%
denote the Hamiltonians, and $\alpha_i:\RN\to\RR_+$ are the switching rate parameters for $i=1,2$. We make the following set of assumptions
\begin{assumption}\label{A1.1}
The functions $\alpha_i:\RN\to\RR_+$ %
\nomenclature[Ab]{$\alpha_i$}{switching rates, see \cref{A1.1}}%
are continuously differentiable 
and for some constant $\upalpha_0>0$ we have 
\begin{equation}\label{EA1.1A}
\upalpha^{-1}_0\leq \alpha_i(x)\leq \upalpha_0, \quad \sup_{x}|\grad\alpha_i(x)|\leq \upalpha_0\quad \text{for}\; i=1,2.
\end{equation}
Also, the following hold.
\begin{itemize}
\item[(A1)] There exist $\ell_i\in \Cc(\RN\times\RN)$, $\xi\mapsto\ell_i(x, \xi)$ %
\nomenclature[Ac]{$\ell_i$}{Lagrangians corresponding to $H_i$}%
strictly convex, and 
$$H_i(x, p)=\sup_{\xi\in\RN}\{\xi\cdot p-\ell_i(x, \xi)\},\quad i=1,2,$$
are the Legendre transformation of $\ell_i, i=1,2$.
Moreover, $H_i\in \Cc^1(\RN\times\RN)$ %
\nomenclature[Ad]{$\Cc^k(D)$}{space of all $k$-times continuously differentiable real-valued functions on $D$}
and 
the functions $\xi\mapsto H_i(x, \xi)$ are
strictly convex, $i=1,2$.
\item[(A2)] For some constants $\gamma_i>1, i=1,2,$ we have
	\begin{align}\label{H1}
		 C_1^{-1}|p|^{\gamma_i}-C_1\leq H_i(x,p)\leq C_1(|p|^{\gamma_i}+1),\quad (x, p)\in\RN\times\RN,
	\end{align}
\begin{align}\label{H2}
	|\grad_x H_i(x,p)|\leq C_1(1+|p|^{\gamma_i})
	\quad (x, p)\in\RN\times\RN,
\end{align}
for some positive constant $C_1$ and $i=1,2$.
\end{itemize}
\end{assumption}
Since $\xi\mapsto H_i(x, \xi)$ is convex, it follows from \eqref{H1} that
\begin{equation}\label{H2A}
|\grad_{p}H_i(x,p)|\leq \tilde{C}_1(1+|p|^{\gamma_i-1})
\quad (x, p)\in\RN\times\RN, \quad i=1,2,
\end{equation}
for some positive constant $\tilde{C}_1$. In fact, for $\abs{p}>0$ we see that
\begin{align*}
|\grad_p H_i(x, p)|=\max_{|e|=1} \grad_p H_i(x, p)\cdot e
&=\max_{|z|=|p|}\, \frac{1}{|p|}\grad_p H_i(x, p)\cdot z
\\
&\leq \frac{1}{|p|} \max_{|z|=|p|} (H_i(x, p+z)-H_i(p)),
\end{align*}
using convexity. Now \eqref{H2A} follows from \eqref{H1}.

Typical examples of $H_i$ satisfying the above assumptions would be
$$H_i(x, p)=\frac{1}{\gamma_i}\langle p, a_i(x)p\rangle^{\nicefrac{\gamma_i}{2}} + 
b_i(x)\cdot p,$$
where $a_i:\RN\to\RR^{N\times N}, b_i:\RN\to\RN$ are bounded functions with bounded derivatives, and $a_i$ are uniformly elliptic for $i=1,2$. In this case,
$$\ell_i(x, \xi)=\frac{1}{\gamma^\prime_i}\langle \xi-b_i(x) , a^{-1}_i(x)
(\xi-b_i(x))\rangle^{\nicefrac{\gamma^\prime_i}{2}}\quad
\text{where}\;\; \frac{1}{\gamma_i}+\frac{1}{\gamma'_i}=1,$$
for $i=1,2$. The source terms $f_i, i=1,2,$ are assumed to satisfy the following
\begin{assumption}\label{A1.2}
The functions $f_i:\RN\to \RR, i=1,2,$ are continuously differentiable and
for some positive constant $C_2$ we have
\begin{equation}\label{F1}
|\grad f_i(x)|\leq C_2 (1+ |f_i(x)|^{2-\frac{1}{\gamma_i}})
\quad x\in\RN,
\end{equation}
for $i=1,2$. We also assume that for some $r>0$ we have
\begin{equation}\label{F2}
[|f_i(x)|+1]^{-1} \sup_{B_r(x)}|f_i(x)|<\,C_3,\quad \text{for}\; x\in\RN,
\end{equation}
for some constant $C_3$ and $i=1,2$.
\end{assumption}
Without any loss of generality, we would assume that $r=1$.
Note that \eqref{F1}-\eqref{F2} hold if we have 
$\sup_{x\in\RN}|\grad \log f_i(x)|<\infty, i=1,2,$ and $f_1, f_2$ are positive 
outside a compact set. Some other type of examples include
$f_i(x)=|x|^{\beta_1} (2+\sin((1+|x|^2)^{\beta_2}))$ for $\beta_i>0$ and
$(\beta_1+2\beta_2-1)\frac{\gamma_i}{2\gamma_i-1}\leq\beta_1, i=1,2$. 
From \eqref{F2} we also see that
$$|f_i(x)|\leq C_3(|f_i(y)|+1) \quad \text{whenever}\; \abs{x-y}\leq 1,$$
which readily gives
\begin{equation}\label{E1.2}
|f_i(x)|\leq C_3\left(\inf_{B_1(x)}|f_i(y)| + 1\right) \quad \text{for all}\; x\in\RN.
\end{equation}
\eqref{F2} will be used to obtain certain estimate on the gradient
of $\bm{u}$ (see \cref{L2.1}).

Throughout the paper, if $\cX(\RN)$ is a  subspace of real-valued functions
on $\RN$ then we define the corresponding space
$\cX(\RN\times\{1,2\}) \df \bigl(\cX(\RN)\bigr)^2$, and endow it with the
product topology, if applicable.
Thus, a function $g\in\cX(\Rd\times\{1,2\})$ is identified with the
vector-valued function
\begin{equation}\label{E-vec}
\bm{g} \,\df\, (g_1,g_2)\in\bigl(\cX(\Rd)\bigr)^2\,,\quad\text{where\ }
f_k(\cdot)\df f(\cdot,k)\,,\quad k=1,2\,.
\end{equation}
With a slight abuse in notation we write $\bm{g}\in\cX(\RN\times\{1,2\})$.

\subsection{Background and Motivation}
The system of equations \eqref{EP} arise 
as the Hamilton-Jacobi equations (HJE) in certain ergodic control problems
of diffusions in a switching environment. To be more precise, 
consider the controlled
dynamics pair $(X, S)$ where $\{X_t\}$ denotes the continuous
part governed by a controlled diffusion
$$\D{X}_t = \bm{b}(X_t, S_t)\, \D{t} -U_t\, \D{t} + \D{W}_t,$$
where $W$ is a standard $N$-dimension Brownian motion, $U$ is 
an admissible control,
 and $\{S_t\}$ is a
two state Markov process, taking values in $\{1,2\}$,
responsible for random switching. The functions $\alpha_1, \alpha_2$ 
corresponds to the switching rates which is also allowed to be state
dependent, that is,
\[
\Prob(S_{t+\delta t}=j|S_{t}=i, X_s, S_s, s\leq t)
=\left\{
\begin{array}{lll}
\alpha_1(X_t)\delta t + \sorder(\delta t)& \text{if}\; j=2,\, i=1,
\\[2mm]
\alpha_2(X_t)\delta t + \sorder(\delta t)& \text{if}\; j=1,\, i=2.
\end{array}
\right.
\]
We consider the minimization problem
$$\lamstr=\,\inf_{U\in\Adm}\liminf_{T\to\infty}\,
\frac{1}{T}\Exp\left[\int_0^T (\bm{f}(X_t) + \bm{\ell}(X_t, S_t))\D{t}\right],$$
where $\Adm$ denotes the set of all admissible controls.
Then the HJE equation associated to this optimal control problem
is given by \eqref{EP} where
$$H_i(x, p)=-b_i(x)\cdot p + \sup_{\xi\in\RN}\{p\cdot\xi -\ell_i(x,\xi)\} \quad i=1,2.$$
For a more precise description see \cref{S-App}. 
Because of the presence of both continuous dynamics and 
discrete jumps, regime-switching systems are capable of describing complex systems and the randomness of the environment. We refer to the book of 
Yin and Zhu \cite{YZ10} for more detail on regime-switching dynamics
and its application to the theory of stochastic control. 
Note that our equations \eqref{EP} includes the stochastic LQ ergodic control problem (that is,
$\gamma_1=\gamma_2=2$) for regime-switching dynamics which 
are quite popular models
in portfolio selection problems (cf. \cite[Chapter~6]{YZ99}).
One of our main results establishes
the existence of a {\it unique} optimal stationary Markov control (see 
\cref{T2.4}) for the above optimization problem.

The ergodic control problems for scalar second order elliptic equations 
have been studied extensively by several mathematicians
 and therefore, it is almost impossible to
list all the important works in this direction. Nevertheless, we mention
some of them that, in our opinion, are milestones in this topic. Ergodic 
control problems with quadratic Hamiltonian are first studied by 
Bensoussan and Freshe \cite{BenFre-92,Ben-Fre} where the authors establish the existence
and uniqueness of unbounded solutions in $\RN$. For space-time periodic
Hamiltonians, the existence and uniqueness are considered by Barles and
Souganidis \cite{Barles-01}. Ichihara \cite{Ichihara-11,Ichihara-12,Ichihara-13b}
considers the problem for a general class of Hamiltonians and 
recurrence/transience properties of the optimal feedback controls are also
discussed. We also mention the work of Cirant \cite{Cirant-14} who
investigates the ergodic control problem in $\RN$ for a fairly
general family of Hamiltonians. 
It is shown in
\cite{Cirant-14} that the problem
in $\RN$ can be {\it approximated} by the ergodic control problems in bounded domains with Neumann boundary condition. Recently, the uniqueness of unbounded solutions for a general family of 
source terms are
established by Barles and Meireles \cite{Barles-16}, which is then further improved
by the first two authors and Caffarelli \cite{ABC-19-CPDE} in the subcritical
case.
There are also
several important works studying long-time behaviour of the solutions
to certain parabolic equations and its convergence to the solutions
to the ergodic control problems: see for instance, Barles-Souganidis \cite{Barles-01}, Fujita-Ishii-Loreti \cite{Fujita-06a}, 
Tchamba \cite{Tchamba-10}, Ichihara \cite{Ichihara-12},
Barles-Porretta-Tchamba \cite{Barles-10},
Barles-Quaas-Rodr\'{\i}guez \cite{BQR21}.

On the other hand, number of works on the ergodic control problem for second-order
weakly coupled elliptic systems are very few. All existing results in this direction
consider the domain to be a torus.
See, for instance, Cagnetti-Gomes-Mitake-Tran
\cite{CGMT15}, Ley-Nguyen \cite{LN16} and references therein. We point out that \cite{LN16,CGMT15} also study the large-time asymptotics for 
the solutions to certain systems of parabolic equations, which we do not consider in this article.
However, if one assumes the action set to be compact then similar problems have been addressed in detail, see Ghosh-Arapostathis-Marcus \cite{GAM93}, Arapostathis-Borkar-Ghosh \cite[Chapter~5]{book}. One of the main challenges in studying the
weakly coupled systems lies in establishing appropriate gradient estimates of $\bm{u}$ and
bounds on the term $|u_1-u_2|$ (see \cref{P2.1} below).

\subsection{Main results}
Our chief goal in this article is to find solutions corresponding to
the critical value $\lamstr$ defined by%
\nomenclature[Ba]{$\lamstr$}{critical value to solve \eqref{EP}, see \eqref{lam}}
\begin{equation}\label{lam}
\lamstr=\sup\{\lambda\in\RR\; :\; \exists\; \bm{u}\in \Cc^2(\RN\times\{1,2\})
\; \text{such that}\; (\bm{u}, \lambda)\; \text{is a subsolution to}\; \eqref{EP}\}.
\end{equation}
The above definition is quite standard and has been used before
by several authors \cite{Ichihara-11,Barles-10,Barles-16,Tchamba-10}.
Our first main result is the following.
\begin{theorem}\label{T1.1}
Let \cref{A1.1} hold. Assume also that $\inf_{x\in\RN} f_i(x)>-\infty$ for
$i=1,2$. Then for every $\lambda\leq \lamstr$ there exists 
$\bm{u}\in \Cc^2(\RN\times\{1,2\})$ such that $(\bm{u}, \lambda)$
solves \eqref{EP}.
\end{theorem}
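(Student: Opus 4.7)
The plan is to build the solution as a local-uniform limit of solutions to Dirichlet problems on balls $B_R\subset\RN$ as $R\to\infty$, leaning crucially on the interior a priori estimates supplied by \cref{L2.1} (gradient bounds) and \cref{P2.1} (oscillation bound on $u_1-u_2$). Fix $\lambda\leq\lamstr$. By the definition \eqref{lam} there is a subsolution $\bm{\psi}\in \Cc^2(\RN\times\{1,2\})$ of \eqref{EP} at parameter $\lambda$: if $\lambda<\lamstr$, pick any subsolution at some $\lambda'\in(\lambda,\lamstr]$, which is automatically a subsolution at $\lambda$ because the right-hand side only increases when $\lambda$ decreases.

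\textbf{Step 1.} For each $R>0$, one solves the Dirichlet problem
\begin{equation*}
-\Delta u_i^R + H_i(x,\grad u_i^R) + \alpha_i(x)(u_i^R - u_{3-i}^R) \;=\; f_i - \lambda \ \text{in}\ B_R,\qquad u_i^R=\psi_i\ \text{on}\ \partial B_R,\quad i=1,2.
\end{equation*}
Existence follows by first truncating the gradient dependence of $H_i$ at level $n$, solving the resulting uniformly elliptic quasi-monotone system by a Leray-Schauder fixed point argument, and then letting $n\to\infty$ using Ladyzhenskaya-Ural'tseva type $W^{2,q}$ estimates that depend only on $L^\infty$ bounds for $u_i^R$. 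The required $L^\infty$ bounds come from comparison: the quasi-monotone structure of the coupling $\alpha_i(x)(u_i-u_{3-i})$ together with the subsolution $\bm\psi$ supplies a lower bound $u_i^R\geq\psi_i$ in $B_R$, while an upper bound is produced on the fixed domain $B_R$ by combining the coercivity of $H_i$ in $p$ from \cref{A1.1}\,(A2) with the local growth bound \eqref{F2} on $f_i$ to manufacture an explicit polynomial supersolution dominating the right-hand side on $B_R$.

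\textbf{Step 2.} \cref{L2.1} then yields $|\grad u_i^R|\leq C_K$ on every compact $K\subset\RN$ uniformly in $R$, and \cref{P2.1} yields a uniform bound $|u_1^R-u_2^R|\leq C_K$ on $K$. Since \eqref{EP} is invariant under the joint shift $(u_1,u_2)\mapsto(u_1+c,u_2+c)$, set
\[
v_i^R(x)\df u_i^R(x) - u_1^R(0),\qquad i=1,2,
\]
so that $v_1^R(0)=0$ and $|v_2^R(0)|\leq C$ by \cref{P2.1}; combined with the gradient estimate, $\{\bm{v}^R\}$ is locally uniformly bounded. Uniform ellipticity together with Schauder / De~Giorgi-Nash-Moser estimates promotes this to a uniform $\Cc^{2,\beta}_{\mathrm{loc}}$ bound, and a diagonal extraction yields $\bm{v}^{R_n}\to\bm{u}$ in $\Cc^2_{\mathrm{loc}}(\RN\times\{1,2\})$. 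Passing to the pointwise limit in the equation (which is shift-invariant) shows that $(\bm{u},\lambda)$ solves \eqref{EP}, establishing the theorem.

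The main obstacle is Step 1: one must obtain classical solutions to the quasilinear \emph{system} on $B_R$ that are consistent enough across $R$ for the local estimates of \cref{L2.1} and \cref{P2.1} to be applied uniformly. Because $f_i$ is not assumed bounded above, constant supersolutions fail, and one is forced to exploit the full strength of the coercivity $H_i(x,p)\gtrsim|p|^{\gamma_i}$ together with \eqref{F2}. The hypothesis $\inf_{\RN}f_i>-\infty$ enters precisely to prevent the normalized sequence $\bm{v}^R$ from drifting to $-\infty$ on compact sets as $R\to\infty$: together with the subsolution $\bm\psi$, it yields an $R$-independent lower bound on the normalized family, so that the passage to the limit in Step 2 produces a genuine (finite) classical solution.
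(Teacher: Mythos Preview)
Your overall strategy coincides with the paper's (\cref{L2.4} and \cref{T2.2}): solve \eqref{EP} on an exhausting sequence of balls, normalize by subtracting the value of one component at the origin, use interior a priori bounds, and pass to the limit along a diagonal subsequence. However, several pieces of your argument are misplaced or incomplete.

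First, you have swapped the roles of \cref{L2.1} and \cref{P2.1}. In this paper it is \cref{P2.1} that supplies \emph{both} the interior gradient bound \eqref{EP2.1B} and the bound \eqref{EP2.1C} on $|u_1-u_2|$, and it requires only \cref{A1.1}. \cref{L2.1}, by contrast, is a coercivity lemma that needs \cref{A1.2} and coercive $f_i$, neither of which is assumed in \cref{T1.1}; it plays no part here. Relatedly, in Step~1 you invoke \eqref{F2}, which belongs to \cref{A1.2} and is not available; but it is also unnecessary, since on any fixed $B_R$ the continuous function $f_i$ is bounded. The paper builds the required supersolution on $B_R$ by solving the \emph{linear} system \eqref{ETB.3C} (the coupled system with the Hamiltonian deleted) and using $\psi_n(H_i)\ge -\eta_2$; see \cref{TB.3}.

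Second, your scheme does not cover the endpoint $\lambda=\lamstr$. The definition \eqref{lam} is a supremum and need not be attained, so you are only guaranteed a subsolution at levels $\lambda<\lamstr$. The paper (\cref{T2.2}) handles $\lambda=\lamstr$ by taking $\lambda_n\nearrow\lamstr$, producing a solution at each $\lambda_n$ via \cref{L2.4}, and then passing to the limit; the estimates of \cref{P2.1} are uniform in $n$ because $(f_i-\lambda_n)_+\le (f_i)_+$ for $\lambda_n\ge 0$.

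Finally, the hypothesis $\inf_{\RN} f_i>-\infty$ is not used where you claim. After normalizing so that $v_1^R(0)=0$, \cref{P2.1} alone (via \eqref{EP2.1B}--\eqref{EP2.1C}) pins down the family locally; no lower bound on $f_i$ enters there. Its actual role is to guarantee $\lamstr>-\infty$: a constant pair is a subsolution of \eqref{EP} for any $\lambda\le \min_i\inf_{\RN} f_i - C_1$ (using $H_i(x,0)\le C_1$ from \eqref{H1}), so the set in \eqref{lam} is nonempty.
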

For proof see \cref{T2.2} below. We should mention that the proof 
of \cref{T1.1} relies on an appropriate gradient estimate and bounds on
the quantity $|u_1-u_2|$ (see \cref{P2.1}). In fact, these estimates are crucial for most of our proofs. 

We say a function $g:\RN\to\RR$ is {\it coercive} if 
$$g(x)\to\infty,\quad \text{as}\;\; \abs{x}\to\infty.$$
Given a set $\mathcal{Y}$ and two functions $g_1, g_2:\mathcal{Y}\to\RR$,
we say $g_1\asymp g_2$ in $\mathcal{Y}$ if there exist positive constants 
$\kappa_1, \kappa_2$ satisfying
$$\kappa_1 g_1\leq g_2\leq \kappa_2 g_1 \quad \text{in}\; \mathcal{Y}.$$
Next we show that there exists a solution $\bm{u}$, bounded from below,
corresponding to the eigenvalue $\lamstr$.
\begin{theorem}\label{T1.2}
Suppose that \cref{A1.1} holds. Also, assume that $f_i, i=1,2,$ are coercive.
Then there exists a solution $(\bm{u}, \lamstr)$ to \eqref{EP} where
$\inf_{\RN} u_i>-\infty$ for $i=1,2$.
\end{theorem}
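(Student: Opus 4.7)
The plan is a vanishing-discount approximation tailored to the coupled system. For each $\varepsilon\in(0,1)$ I would first construct a classical solution $\bm v^\varepsilon\in\Cc^2(\RN\times\{1,2\})$ of the discounted system
\begin{equation*}
-\Delta v^\varepsilon_i + H_i(x,\grad v^\varepsilon_i) + \alpha_i(v^\varepsilon_i-v^\varepsilon_j) + \varepsilon v^\varepsilon_i \,=\, f_i(x), \quad i=1,2,
\end{equation*}
obtained by solving the Dirichlet problem on balls $B_R$ (with zero boundary data) and passing to $R\to\infty$ via the a~priori estimates of \cref{P2.1} together with Schauder theory. Since $f_i$ is bounded below by coercivity and the coupling is cooperative (the off-diagonal coefficients $-\alpha_i$ are nonpositive, so the system is quasi-monotone), comparison with constants yields $v^\varepsilon_i\ge(\inf f_i-C)/\varepsilon$. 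I then set $\lambda_\varepsilon\df\varepsilon v^\varepsilon_1(0)$ and $w^\varepsilon_i\df v^\varepsilon_i-v^\varepsilon_1(0)$, so that $w^\varepsilon_1(0)=0$ and
\begin{equation*}
-\Delta w^\varepsilon_i + H_i(x,\grad w^\varepsilon_i)+ \alpha_i(w^\varepsilon_i-w^\varepsilon_j)+\varepsilon w^\varepsilon_i \,=\, f_i - \lambda_\varepsilon.
\end{equation*}

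The next step is to show that $\lambda_\varepsilon\to\lamstr$ as $\varepsilon\to0$ and to extract a locally convergent subsequence of $\bm w^\varepsilon$. The bound $\limsup_{\varepsilon\to 0}\lambda_\varepsilon\le\lamstr$ is immediate from the definition \eqref{lam} once any limit has been produced. For the reverse inequality, given $\lambda<\lamstr$ I would take the solution $\bm u^\lambda$ furnished by \cref{T1.1}; shifting it by an additive constant of order $\lambda/\varepsilon$ converts it into a subsolution of the $\varepsilon$-discounted system with right-hand side $f_i$, and the quasi-monotone comparison principle delivers $\lambda_\varepsilon\ge\lambda-O(\varepsilon)$. \Cref{P2.1} then supplies uniform-in-$\varepsilon$ bounds on $|\grad w^\varepsilon|$ and on $|w^\varepsilon_1-w^\varepsilon_2|$ over compact sets, which combined with the normalization $w^\varepsilon_1(0)=0$ yield uniform local $C^{1,\alpha}$ estimates. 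Schauder theory upgrades these to local $\Cc^2$ bounds, and a standard diagonal extraction produces the desired pair $(\bm u,\lamstr)$ solving \eqref{EP}.

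The main obstacle, and the step where coercivity of $f_i$ is decisively used, is to show $\inf u_i>-\infty$. The idea is to construct a uniform coercive lower barrier for $\bm w^\varepsilon$ outside a large ball. Using \eqref{H1}, the coercivity of $f_i$, and the doubling bound \eqref{E1.2}, I would build a function $\Lyap\colon\RN\to\RR$ with $\Lyap(x)\to\infty$ as $|x|\to\infty$ satisfying
\begin{equation*}
-\Delta\Lyap + H_i(x,\grad\Lyap) \,\le\, f_i(x) - (\lamstr+1)\quad\text{for }|x|\ge R_0,\ i=1,2,
\end{equation*}
so that the pair $(\Lyap-K,\Lyap-K)$ becomes a subsolution of the $\varepsilon$-discounted system on $\{|x|\ge R_0\}$ for $K$ large (the coupling term $\alpha_i(u_i-u_j)$ vanishes on constant profiles). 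The cooperative comparison principle then gives $w^\varepsilon_i\ge\Lyap-K$ uniformly in small $\varepsilon$, and passing to the limit yields the sought-after lower bound on $u_i$. The delicate point will be engineering a single $\Lyap$ that dominates both equations simultaneously despite the off-diagonal coupling; this is exactly where the simultaneous coercivity of the pair $(f_1,f_2)$, rather than only of one component, is essential.
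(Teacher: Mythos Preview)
Your vanishing-discount scheme is a recognizable alternative, but several steps do not go through as written, and one of them uses a hypothesis the theorem does not grant.

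The paper takes a different route (\cref{T2.3}): it solves on each ball $B_n$ an ergodic problem whose source terms $f^n_{i,\alpha}$ are artificially modified to blow up at $\partial B_n$. This forces the solution $\bm u^n$ itself to explode at the boundary (\cref{L2.5}), so the infimum of $\bm u^n$ is attained at an interior point $(x_n,i_n)$. Evaluating the equation there gives $\lambda_n\ge f_{i_n}(x_n)$, and coercivity of $f_i$ confines $x_n$ to a fixed compact set independent of $n$. The local estimates of \cref{P2.1} then produce the limit $(\bm u,\lamstr)$ with $\bm u$ bounded below. No global barrier and no comparison on an unbounded domain is ever needed.

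In your plan, by contrast:
\begin{itemize}
\item You invoke the doubling bound \eqref{E1.2}, which comes from \cref{A1.2}; but \cref{T1.2} assumes only \cref{A1.1} and coercivity of $f_i$, so that bound is not available here.
\item Passing the Dirichlet solutions $v^{\varepsilon,R}$ to $R\to\infty$ needs an $R$-independent bound on $v^{\varepsilon,R}(0)$. Constant comparison gives the lower side, but since $f_i$ is unbounded there is no constant supersolution, and the interior-maximum argument only yields $\varepsilon v^{\varepsilon,R}\le \sup_{B_R}f_i+C$, which diverges with $R$. The paper avoids this by staying on a fixed ball with explicit upper and lower barriers (\cref{L2.5}).
\item For $\liminf\lambda_\varepsilon\ge\lambda$: making $\bm u^\lambda+c$ a subsolution of the discounted system requires $\varepsilon(u^\lambda_i+c)\le\lambda$ everywhere, i.e.\ $u^\lambda_i$ bounded above. \Cref{T1.1} provides no such bound. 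Even granting it, you would still need a comparison principle on all of $\RN$, whereas the paper only supplies one on bounded domains (\cref{TB.1}).
\item The final barrier step compares $(\Lyap-K,\Lyap-K)$ with $\bm w^\varepsilon$ on the exterior $\{|x|>R_0\}$; this is again an unbounded-domain comparison with no control on $\bm w^\varepsilon$ at infinity, and does not follow from the available tools.
\end{itemize}
The paper's device of forcing blow-up at $\partial B_n$ is exactly what sidesteps all of these: it makes the minimum interior, so a one-line computation at the minimum replaces your global barrier, and it keeps every comparison on a bounded domain.
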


For proof see \cref{T2.3}. Our next result concerns the uniqueness of solutions.
\begin{theorem}\label{T1.3}
Let \cref{A1.1,A1.2} hold. In addition, we also assume that 
$f_1\asymp f_2$ outside a compact set, and $f_i, i=1,2,$ are coercive. Let
 $(\bm{u}, \lambda)$
and $(\bm{\tilde u}, \tilde\lambda)$ be two solutions to \eqref{EP}
with $\inf_{\RN}u_i>-\infty,\, \inf_{\RN}\tilde{u}_i>-\infty$ for $i=1,2$.
Then we must have $\lambda=\tilde{\lambda}=\lamstr$ and $u_i=\tilde{u}_i+c$
for some constant $c$ and $i=1,2$.
\end{theorem}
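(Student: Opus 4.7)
The argument splits into two claims: (i) $\lambda = \tilde\lambda = \lamstr$, and (ii) once this is established, $\bm{u}$ and $\bm{\tilde u}$ differ by a common additive constant.

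For (i), the inequalities $\lambda,\tilde\lambda \leq \lamstr$ follow directly from \eqref{lam}. To obtain the reverse I would argue by contradiction: assume $\lambda < \lamstr$. Invoking \cref{T1.2}, pick a solution $(\bm{v}, \lamstr)$ with $\inf v_i > -\infty$, set $w_i = v_i - u_i$, and express each $H_i$-difference via the mean value theorem as $b_i(x)\cdot \grad w_i$, where $b_i(x) \df \int_0^1 \grad_p H_i(x, t\,\grad v_i + (1-t)\grad u_i)\,\D t$ is locally bounded thanks to the gradient estimates of \cref{L2.1}. Then $\bm{w}$ satisfies the linear weakly coupled cooperative system
\[
-\Delta w_i + b_i(x)\cdot \grad w_i + \alpha_i(x)(w_i - w_j) \,=\, \lambda - \lamstr \,<\, 0, \qquad i \neq j \in \{1,2\}.
\]
If $(x_0,i_0)$ were a global maximum of $\bm{w}$, then $-\Delta w_{i_0}(x_0) \geq 0$, $\grad w_{i_0}(x_0) = 0$, and $w_{i_0}(x_0) \geq w_{j_0}(x_0)$, so the left-hand side at $x_0$ is nonnegative, contradicting the strictly negative right-hand side. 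The content of this step is therefore to show that $\sup \bm{w}$ is attained; here I would combine the coercivity of $f_i$ and the subcritical gap $\lamstr - \lambda > 0$ with a Lyapunov/exit-time argument, together with \cref{L2.1}, to prove that $\bm{u}$ grows strictly faster than $\bm{v}$ at infinity, so $\bm{w} \to -\infty$.

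With $\lambda = \tilde\lambda = \lamstr$ in hand, set $\bm{w} = \bm{u} - \bm{\tilde u}$ and linearize as above to obtain the homogeneous system
\[
-\Delta w_i + b_i(x)\cdot \grad w_i + \alpha_i(x)(w_i - w_j) \,=\, 0, \qquad i \neq j.
\]
I would next argue that $\bm{w}$ is bounded on $\RN$: the coercivity of the $f_i$'s, the matching behavior enforced by $f_1 \asymp f_2$ outside a compact set, and \cref{L2.1}'s gradient and oscillation estimates pin down the asymptotic profile of any critical solution bounded below up to a bounded remainder. With $|w_i| \leq C$, the cooperative strong maximum principle (in the spirit of Protter--Weinberger for weakly coupled systems) applied at a supremum extracted by a sliding/compactness argument yields $w_1 \equiv w_2 \equiv c$, with the common value forced by positivity of the $\alpha_i$'s in the coupling term.

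The principal obstacle in both parts is the asymptotic comparison: in (i) the separation $u_i - v_i \to +\infty$ quantifying the subcritical gap, and in (ii) the uniform bound $|u_i - \tilde u_i| \leq C$. Both rely decisively on the coercivity of $f_i$ and the estimates of \cref{L2.1}; for (ii), the hypothesis $f_1 \asymp f_2$ is essential to synchronize the growth of the two components and prevent the coupling-driven imbalance that would otherwise allow $\bm{w}$ to drift to infinity.
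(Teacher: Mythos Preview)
Your outline takes a fundamentally different route from the paper. The paper does \emph{not} use a maximum-principle comparison of two solutions; instead it uses a convex-analytic (linear programming) argument: it introduces the set $\eom$ of infinitesimally invariant measures on $\RN\times\RN\times\{1,2\}$ and the value $\bar\lambda=\inf_{\bm\mu\in\eom}\bm\mu(\bm F)$, proves $\lamstr\le\bar\lambda$ by testing against the cut-off $\chi_r(u_k)$ and passing $r\to\infty$ (\cref{L2.2}), and proves $\bar\lambda\le\lambda$ by building from any nonnegative solution $\bm u$ an ergodic measure $\bm\mu_{\bm u}$ via the feedback $\xi_k=\grad_p H_k(x,\grad u_k)$ (\cref{L2.3}). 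Uniqueness is then obtained by convex-combining the two occupation measures $\bm\mu_{\bm u}$ and $\bm\mu_{\tilde{\bm u}}$ and invoking the \emph{strict} convexity of $\ell_k$ to force $\grad u_k=\grad\tilde u_k$.

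Your approach, by contrast, has genuine gaps precisely at the steps you flag as ``the principal obstacle.'' First, nothing in the paper yields the asymptotic separation $u_i-v_i\to+\infty$ in part~(i), nor the boundedness of $u_i-\tilde u_i$ in part~(ii). \Cref{L2.1} gives only the \emph{one-sided} lower bound $u_i\gtrsim f_i^{1/\gamma_i}$ and the ratio estimate $\abs{\grad u_i}^2/u_i\lesssim f_i^{1/\gamma_i}$; it does not pin down the growth of a critical solution up to $O(1)$, and in particular does not rule out two bounded-below solutions with $u_i-\tilde u_i$ unbounded. The paper in fact comments (just after the statement of \cref{T1.3}) that the scalar-type arguments based on coercivity and exponential transforms ``do not seem to work in the present setting because of the presence of the coupling terms,'' which is exactly why it resorts to the measure-theoretic method. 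Second, even granting boundedness of $\bm w$, your sliding/Liouville step faces drift coefficients $b_i(x)=\int_0^1\grad_p H_i(\cdot)\,\D t$ that grow like $\abs{\grad u_i}^{\gamma_i-1}\sim f_i^{(\gamma_i-1)/\gamma_i}\to\infty$, so translating to infinity does not produce a limiting equation with bounded coefficients, and the cooperative strong maximum principle on $\RN$ is not directly available. These are not technicalities you can fill in with \cref{L2.1}; they are the reason the paper chooses a different mechanism altogether.
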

Proof of \cref{T1.3} follows from \cref{T2.1}.
 As can be seen from above that
 \cref{A1.2} is a bit
  stronger than the usual hypotheses used to establish
 uniqueness in the super-critical regime (that is, $\gamma_i\geq 2$)
 for scalar model (cf. \cite{Barles-16}). In the scalar case, one
 generally uses an exponential transformation together with the coercive
 property of the solutions to establish uniqueness \cite{BenFre-92,Barles-16}. Similar transformation
  does not seem to work in the present setting
 because of the presence of the coupling terms. So for the uniqueness we 
 rely on the convex analytic approach of \cite{ABC-19-CPDE} and
 the estimates in \cref{P2.1}.
Also, the condition $f_1\asymp f_2$ can be relaxed provided 
$f_i, i=1,2,$ satisfy certain polynomial growth hypothesis. See \cref{T-Ext}
for further detail.

\begin{remark}
The above results correspond to a switching Markov process
having two states, that is, 
the solution $\bm{u}$ is given by a tuple $(u_1, u_2)$ of length $2$. 
All the results of this article continue to hold for weakly coupled systems with any finite number of states, provided \cref{A1.1,A1.2} are modified accordingly. 
\end{remark}

{\it The rest of the article is organized as follows. \cref{Proofs}
contains the proofs of our main results and their implication to
the optimal control problems. The proof of \cref{P2.1}
is presented in \cref{App-P2.1}, whereas \cref{App-Exis} contains few results
about the existence of solutions in bounded domains which are used in the 
proofs of \cref{T1.1,T1.2}.}

\section{Proofs of main results}\label{Proofs}
In this section we prove \cref{T1.1,T1.2,T1.3}. We start by proving a gradient
estimate which is a key ingredient for most of the proofs below. 
\begin{proposition}\label{P2.1}
Let \cref{A1.1} hold. Let $\varepsilon\in [0, 1]$. Suppose 
$B_1\Subset B_2\Subset D$
 be two given concentric balls, centered at $z$, in $\RN$.
Consider a solution $\bm{u}\in \Cc^2(D\times\{1,2\})$ to the 
system of equations 
\begin{equation}\label{EP2.1A}
\begin{split}
-\Delta u_1(x) + H_1(x,\grad u_1) + \alpha_1(x)(u_1(x)-u_2(x))+\varepsilon u_1(x) &=\,f_{1}(x) \quad \text{in}\; D,
	\\
-\Delta u_2(x) + H_2(x,\grad u_2) + \alpha_2(x)(u_2(x)-u_1(x))+\varepsilon u_2(x) &=\,f_{2}(x) \quad \text{in}\; D.
\end{split}
\end{equation}
Then there exists a constant $C>0$, dependent only on 
$\dist(B_1,\partial B_2),\gamma_i,C_1, N$ and $\sup_{B_2}(|\alpha_i|+|\grad \alpha_i|)$ for $i=1,2$, satisfying
\begin{equation}\label{EP2.1B}
	\sup_{B_1}\{|\grad u_1|^{2\gamma_1}, |\grad u_2|^{2\gamma_2}\}\leq C\Bigl(1 + \sup_{B_2}\sum_{i=1}^2(f_i)_+^{2}
	+ \sup_{B_2}\sum_{i=1}^2|\grad f_i|^{2\gamma_i/(2\gamma_i-1)}+\sup_{B_2}\sum_{i=1}^2(\varepsilon u_i)_{-}^{2}\Bigr).
\end{equation}
Furthermore, for some positive constant $\tilde{C}$, dependent only on 
$\dist(B_1,\partial B_2),\gamma_i,C_1, N, \upalpha_0$, we have
\begin{equation}\label{EP2.1C}
|u_1(z)-u_2(z)|^2\leq 
\tilde{C}\Bigl(1+ \sup_{B_2}\sum_{i=1}^2(f_i)_+^2
+ \sup_{B_2}\sum_{i=1}^2|\grad f_i|^{2\gamma_i/(2\gamma_i-1)}
+ \sup_{B_2}\sum_{i=1}^2(\varepsilon u_i)_{-}^{2}\Bigr).
\end{equation}
\end{proposition}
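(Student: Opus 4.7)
The approach I would take is the classical Bernstein gradient-estimate technique adapted to the weakly coupled setting. The two new complications compared with the scalar case are: (i) when one differentiates the $i$-th equation of \eqref{EP2.1A} and takes an inner product with $\nabla u_i$, the coupling $\alpha_i(u_i-u_j)$ produces a cross term $-\alpha_i\,\nabla u_i\cdot\nabla u_j$ together with a zeroth-order term $(u_i-u_j)\,\nabla\alpha_i\cdot\nabla u_i$; (ii) closing the argument therefore requires the bound \eqref{EP2.1C} on $|u_1-u_2|$ to be proved jointly with \eqref{EP2.1B}. I would obtain \eqref{EP2.1C} first in a preliminary form expressed in terms of $\sup_{B_2}|\nabla u_i|^{\gamma_i}$, and then feed it into the Bernstein step.

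For the bound on $v\df u_1-u_2$, subtracting the two equations of \eqref{EP2.1A} gives
\begin{equation*}
-\Delta v + (\alpha_1+\alpha_2+\varepsilon)\,v \;=\; (f_1-f_2) - \bigl(H_1(x,\nabla u_1)-H_2(x,\nabla u_2)\bigr).
\end{equation*}
Since $\alpha_1+\alpha_2\ge 2\upalpha_0^{-1}>0$, the standard maximum-principle argument applied to $v\phi^2$ (for a cut-off $\phi\in\Cc_c^\infty(B_2)$ equal to $1$ on $B_1$), combined with \eqref{H1} to control $|H_i(x,\nabla u_i)|\lesssim 1+|\nabla u_i|^{\gamma_i}$, yields a preliminary estimate of the form
\begin{equation*}
v(z)^2 \;\le\; C\Bigl(1 + \sup_{B_2}\sum_{i}\bigl(f_i^2+(\varepsilon u_i)_-^2\bigr) + \sup_{B_2}\sum_{i}|\nabla u_i|^{2\gamma_i}\Bigr).
\end{equation*}

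The core of the proof is the Bernstein step. Setting $W_i\df |\nabla u_i|^2$, differentiating the $i$-th equation of \eqref{EP2.1A} in $x_k$, multiplying by $\partial_k u_i$ and summing over $k$ produces
\begin{equation*}
-\tfrac12\Delta W_i + |D^2 u_i|^2 + \tfrac12\nabla_p H_i\cdot\nabla W_i + \nabla u_i\cdot\nabla_x H_i + \alpha_i(W_i-\nabla u_i\cdot\nabla u_j) + (u_i-u_j)\nabla u_i\cdot\nabla\alpha_i + \varepsilon W_i \;=\; \nabla u_i\cdot\nabla f_i.
\end{equation*}
Using the equation itself together with \eqref{H1}, one extracts the coercive lower bound $|D^2 u_i|^2\ge N^{-1}(\Delta u_i)^2 \gtrsim |\nabla u_i|^{2\gamma_i} - C\bigl(1+f_i^2+v^2+(\varepsilon u_i)_-^2\bigr)$, which supplies the $|\nabla u_i|^{2\gamma_i}$ appearing in \eqref{EP2.1B}; the $(\varepsilon u_i)_-^2$ enters at this point because $\varepsilon u_i$ sits inside $\Delta u_i$ with a sign that can only be absorbed into $H_i$ when it is non-negative. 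Summing over $i\in\{1,2\}$ eliminates the cross coupling via $|\nabla u_1\cdot\nabla u_2|\le\tfrac12(W_1+W_2)$, while $\nabla u_i\cdot\nabla f_i$ is handled by Young's inequality with conjugate exponent $\tfrac{2\gamma_i}{2\gamma_i-1}$ (which is precisely the origin of that exponent in \eqref{EP2.1B}), and $(u_i-u_j)\nabla\alpha_i\cdot\nabla u_i$ is reduced to $|v|\,|\nabla u_i|$ and absorbed using the preliminary estimate on $v^2$.

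The final localization step is to take $\phi\in\Cc_c^\infty(B_2)$ with $\phi\equiv 1$ on $B_1$ and apply the maximum principle to
\begin{equation*}
\Psi(x) \;\df\; \phi(x)^{2q}\,\bigl(W_1(x)^{\gamma_1}+W_2(x)^{\gamma_2}\bigr),
\end{equation*}
with $q$ chosen large enough (depending on $\gamma_i$) that derivatives of $\phi$ produce error terms of strictly lower order in $|\nabla u_i|$. Evaluating the differential inequality above at a maximum of $\Psi$, and inserting the preliminary estimate for $v^2$, yields \eqref{EP2.1B}; feeding \eqref{EP2.1B} back into the preliminary bound on $v$ then gives \eqref{EP2.1C}. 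The step I expect to be the most delicate is the bookkeeping at this final maximum-principle computation: one has to simultaneously absorb the cross coupling, the $v$-dependent term, the $\nabla_p H_i\cdot\nabla W_i$ contribution produced by the cut-off (which by $\nabla\Psi=0$ is of order $|\nabla u_i|^{\gamma_i-1}W_i^{\gamma_i}|\nabla\phi|/\phi$), and the $\varepsilon$-dependent piece, while keeping the leading power $|\nabla u_i|^{2\gamma_i}$ strictly on the coercive side and producing the exact conjugate exponent $\tfrac{2\gamma_i}{2\gamma_i-1}$ on $|\nabla f_i|$.
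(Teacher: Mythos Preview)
Your overall strategy---Bernstein's method with a localizing cut-off---is indeed what the paper does, but the two devices you propose for handling the coupling differ from the paper's, and one of them leaves a genuine gap.

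\textbf{Cross gradient term.} The paper does \emph{not} sum over $i$. It works with the single quantity $\theta=\eta\max_i|\nabla u_i|^2$, where $\eta=\rho^{4\gamma/(\gamma-1)}$ is a \emph{radial, radially decreasing} cut-off. Say the maximum of $\theta$ over $B_2$ is attained at $x_0$ in the component $i=1$. Then automatically $|\nabla u_1(x_0)|\ge|\nabla u_2(x_0)|$, so the cross term $\eta\alpha_1(\nabla u_1-\nabla u_2)\cdot\nabla u_1$ is $\ge 0$ at $x_0$ and can simply be dropped. Your summing approach does not actually kill the cross term when $\alpha_1\ne\alpha_2$ (the residual $\frac12(\alpha_1-\alpha_2)(W_1-W_2)$ has no sign), though it is lower order and can be absorbed; the paper's device is both simpler and sharper.

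\textbf{Zeroth-order term and the circularity.} This is where your proposal has a gap. Your preliminary maximum-principle bound on $v=u_1-u_2$ produces an estimate of the form $v(z)^2\lesssim \text{(data)}+\sup_{B_2}\sum_i|\nabla u_i|^{2\gamma_i}$, but the Bernstein step needs $v(x_0)^2$ at the (unknown) maximizer $x_0\in\text{supp}\,\phi$, and the gradient term on your right-hand side is a $\sup_{B_2}$, which the Bernstein estimate does \emph{not} control. The paper sidesteps this circularity with a neat trick: by the mean-value theorem, $|v(x_0)|\le |x_0|\,|\nabla v(\zeta)|+|v(0)|$ for some $\zeta$ on the segment $[0,x_0]$; since $\eta$ is radially decreasing and $|\zeta|<|x_0|$, one has $\eta(x_0)|\nabla u_i(\zeta)|^2\le \eta(\zeta)|\nabla u_i(\zeta)|^2\le\theta(x_0)$. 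Thus $\eta(x_0)v(x_0)^2\lesssim \theta(x_0)+|v(0)|^2$, and the $\theta(x_0)$ is harmlessly absorbed. This yields the intermediate gradient estimate \eqref{EP2.1D} with $|u_1(0)-u_2(0)|^2$ still on the right.

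\textbf{Eliminating $|u_1(0)-u_2(0)|^2$.} The paper then proves \eqref{EP2.1C} not by subtracting the equations and applying a maximum principle, but by a \emph{compactness/contradiction} argument: assuming a sequence with $|u_1^n(0)-u_2^n(0)|^2>n\cdot(\text{RHS})$, one normalizes $v_i^n=u_i^n/u_2^n(0)$, uses \eqref{EP2.1D} to get uniform $\Cc^{1,\alpha}$ bounds, passes to a limit $(v_1,v_2)\equiv(0,1)$, and obtains a contradiction from the second equation together with $\alpha_2\ge\upalpha_0^{-1}>0$. Combining \eqref{EP2.1C} with \eqref{EP2.1D} then gives \eqref{EP2.1B}.

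In short: your final ``feed back $v^2$ into Bernstein'' step, which you correctly flagged as the most delicate, is precisely where a different idea is needed. The paper's radial cut-off plus MVT trick, followed by a blow-up argument for $v(0)$, is what closes the loop.
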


The proof of this Proposition is quite long and therefore, is
deferred to \cref{App-P2.1}.

Next, we show that any solution of \eqref{EP} which is bounded from below,
is actually coercive. This lemma should be compared with 
\cite[Proposition~3.4]{Barles-16} and \cite[Lemma~2.1]{ABC-19-CPDE}.
Our proof does not  use Harnack's inequality like these previous 
works. Our proof is based on the comparison principle.

\begin{lemma}\label{L2.1}
Grant \cref{A1.1,A1.2}.
Let $\bm{u}=(u_1, u_2)$ be a non-negative solution to
\begin{equation}\label{EL2.1A}
\begin{split}
-\Delta u_1 + H_1(x, \grad u_1) + \alpha_1(x)(u_1-u_2) &=f_1\quad
\text{in}\; \RN,
\\
-\Delta u_2 +  H_2(x, \grad u_2) + \alpha_2(x)(u_2-u_1) &=f_2\quad \text{in}\; \RN.
\end{split}
\end{equation}
Also, assume that $f_i, i=1,2,$ are coercive.
Then for some positive constants $M_1, M_2$ we have
\begin{equation}\label{EL2.1B}
u_i(x)\geq M_1 [f_i(x)]^{\nicefrac{1}{\gamma_i}}-M_2\quad x\in\RN, \, i=1,2.
\end{equation}
Moreover, if $f_1\asymp f_2$ outside a compact set, then 
$\frac{1}{u_i(x)}|\grad u_i|^2\leq M_3 [f_i(x)]^{\nicefrac{1}{\gamma_i}}$
outside a compact set, for some positive constant $M_3$.
\end{lemma}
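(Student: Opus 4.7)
The plan is to establish both estimates via the comparison principle for the weakly coupled quasi-monotone system \eqref{EL2.1A}, without recourse to Harnack's inequality. The first and main ingredient is to construct a pair $\bm v = (v_1, v_2)$ that is a subsolution of \eqref{EL2.1A} outside a large ball and realizes the profile $M_1 f_i^{1/\gamma_i} - M_2$.

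For the pointwise lower bound, I would set $v_i(x) = M_1\bigl(f_i(x)+K\bigr)^{1/\gamma_i} - M_2$ with $K$ large enough that $f_i + K \geq 1$ (using coercivity and continuity of $f_i$). A direct computation gives
\begin{equation*}
|\nabla v_i|^{\gamma_i}\leq \bigl(\tfrac{M_1}{\gamma_i}\bigr)^{\gamma_i}(f_i+K)^{1-\gamma_i}|\nabla f_i|^{\gamma_i},
\end{equation*}
so by \eqref{H1} and \eqref{F1}, $H_i(x,\nabla v_i)$ is controlled in terms of powers of $f_i$. The coupling term $\alpha_i(v_i - v_j)$ is of order $M_1\,\max_k f_k^{1/\gamma_k}$, which is $o(f_i)$ at infinity since $\gamma_i > 1$. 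The main obstacle is the quantitative estimate on $|\nabla v_i|^{\gamma_i}$: taken at face value, \eqref{F1} yields $|\nabla v_i|^{\gamma_i} = O(f_i^{\gamma_i})$, which would prevent $\bm v$ from being a subsolution. To close this gap I would exploit the combined information from \eqref{F1} and \eqref{F2} together with the fact that $f_i$ is $\Cc^1$ on all of $\RN$; saturating \eqref{F1} would force finite-distance blow-up of $f_i$ (incompatible with being defined globally), so the effective ratio $|\nabla f_i|/f_i$ remains bounded at infinity, yielding $|\nabla v_i|^{\gamma_i}/f_i = O(M_1^{\gamma_i})$, which is absorbed by $f_i$ for $M_1$ sufficiently small. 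Once $\bm v$ is confirmed as a strict subsolution outside a suitable ball $B_R$, $M_2$ is enlarged so that $v_i\leq 0\leq u_i$ on $\partial B_R$ and inside $B_R$, and the comparison principle for quasi-monotone systems on $\RN\setminus B_R$ (with growth at infinity supplied by the coercivity of $f_i$) gives $\bm u\geq \bm v$.

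For the second (conditional) estimate, combine \cref{P2.1} with the first part. With $\varepsilon = 0$, \eqref{EP2.1B} yields $|\nabla u_i|^{2\gamma_i}(z)\leq C\bigl(1 + \sup_{B_2(z)}f_i^2 + \sup_{B_2(z)}|\nabla f_i|^{2\gamma_i/(2\gamma_i-1)}\bigr)$. Using \eqref{F1}, $|\nabla f_i|^{2\gamma_i/(2\gamma_i-1)}\leq C(1+f_i^2)$, and by \eqref{E1.2}, $\sup_{B_2(z)}f_i\leq C'(1+f_i(z))$, so
\begin{equation*}
|\nabla u_i(z)|^2\leq C''(1+f_i(z))^{2/\gamma_i}.
\end{equation*}
Combining with $u_i(z)\geq M_1 f_i(z)^{1/\gamma_i} - M_2$ from the first part, and using $f_1\asymp f_2$ together with coercivity to guarantee $u_i(z)\geq \tfrac{M_1}{2}f_i(z)^{1/\gamma_i}$ for $|z|$ sufficiently large, division yields $|\nabla u_i|^2/u_i \leq M_3 f_i^{1/\gamma_i}$ outside a compact set, as required.
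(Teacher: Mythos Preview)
Your approach to the second (conditional) estimate is correct and matches the paper's argument: combine the gradient bound from \cref{P2.1} with the lower bound from the first part, using \eqref{F1}--\eqref{F2} to reduce the right-hand side to a power of $f_i$.

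The first part, however, has a genuine gap. Your global barrier $v_i(x)=M_1(f_i(x)+K)^{1/\gamma_i}-M_2$ runs into three obstructions. First, the hypotheses only give $f_i\in\Cc^1$, so $\Delta v_i$ need not make sense classically and the subsolution inequality cannot be verified pointwise. Second, even granting more regularity, the claim that ``the effective ratio $|\nabla f_i|/f_i$ remains bounded at infinity'' is \emph{not} a consequence of \eqref{F1}--\eqref{F2}: functions such as $f_i(x)=\exp(|x|^\beta)$ with $\beta>1$ satisfy \eqref{F1} (since $2-\tfrac{1}{\gamma_i}>1$) yet have $|\nabla f_i|/f_i\to\infty$. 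Consequently the estimate $|\nabla v_i|^{\gamma_i}=O(M_1^{\gamma_i}f_i)$ fails in general, and the $H_i$-term cannot be absorbed. Third, the comparison principle invoked at the end is stated and proved in the paper only on bounded domains (\cref{TB.1}); applying it on $\RN\setminus B_R$ would require an additional argument controlling behaviour at infinity.

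The paper sidesteps all three issues by working \emph{locally}: for each $x_0$ with $|x_0|$ large it takes the explicit quadratic barrier
\[
\psi_i(y)=\theta\,f_i(x_0)^{1/\gamma_i}\bigl(1-|y-x_0|^2\bigr)
\]
on $B_1(x_0)$. This function depends on $f_i$ only through the single value $f_i(x_0)$, so no differentiability of $f_i$ is needed, and its gradient and Laplacian are elementary to compute. Using \eqref{H1} and \eqref{E1.2} one checks directly that $(\psi_1,\psi_2)$ is a subsolution in $B_1(x_0)$ for $\theta$ small and $R$ large; since $\psi_i\le 0\le u_i$ on $\partial B_1(x_0)$, the bounded-domain comparison principle yields $u_i(x_0)\ge\psi_i(x_0)=\theta f_i(x_0)^{1/\gamma_i}$. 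This local barrier argument is the missing idea.
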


\begin{proof}
Choose $R>0$ so that $f_i(x)>1$ for $|x|\geq R$.
Fix a point $x_0\in B_{R+1}^c(0)$ and define
$$\psi_i(y)=\theta|f_i(x_0)|^{\nicefrac{1}{\gamma_i}}(1-|y-x_0|^2),$$
where $\theta>0$ is to be chosen later and $i=1,2$.
Then, using \eqref{EA1.1A}-\eqref{H1}, we have in $B_1(x_0)$
\begin{align}\label{EL2.1C}
&\Delta \psi_1(y) - H_1(y, \grad\psi_1(y)) + \alpha_1(y)(\psi_2-\psi_1) + f_1(y)\nonumber
\\
&\quad \geq \Delta \psi_1(y) - C_1|\grad \psi_1|^{\gamma_1}-C_1
 + \alpha_1(y)(\psi_2-\psi_1) + f_1(y)\nonumber
\\
&\quad \geq -2N\theta|f_1(x_0)|^{\nicefrac{1}{\gamma_1}} - {2^{\gamma_1}\theta^{\gamma_1}}C_1|f_1(x_0)||y-x_0|^{\gamma_1}-C_1
-\alpha_1(y) \theta|f_1(x_0)|^{\nicefrac{1}{\gamma_1}}+ f_1(y)\nonumber
\\
&\quad \geq  f_1(x_0)\left[-2N\theta|f_1(x_0)|^{\nicefrac{1}{\gamma_1}-1} - {2^{\gamma_1}\theta^{\gamma_1}}C_1 - C_1 (f_1(x_0))^{-1}
-\upalpha_0 \theta|f_1(x_0)|^{\nicefrac{1}{\gamma_1}-1}+ \kappa\right],
\end{align}
where 
$$\Bigl[\inf_{|x|\geq R+1}\inf_{y\in B_1(x)} f(y)\Bigr](|f(x)|+1)^{-1}
\geq \kappa>0
\quad
\text{for $R$ large enough}, \text{by}\; \eqref{E1.2}.$$
Since $f_1$ is coercive, we can choose $\theta$ small and $R$ large
so that the rhs of \eqref{EL2.1C} is positive. Similarly, we can
also show that for some small $\theta$ and large $R$
$$\Delta \psi_2(y) - H_2(y, \grad \psi_2) + \alpha_2(x)(\psi_1-\psi_2)
+ f_2(y)\geq 0 \quad \text{in}\; B_1(x_0),$$
whenever $|x_0|>R$. We can now apply comparison principle, \cref{TB.1}, in $B_1(x_0)$
 to conclude that $(u_1, u_2)\geq (\psi_1, \psi_2)$
in $B_1(x_0)$ implying
$u_i(x_0)\geq \theta [f_i(x_0)]^{\nicefrac{1}{\gamma_i}}$
for $i=1,2$ and for all $|x_0|>R$.
This gives \eqref{EL2.1B}. Again, from \eqref{F1}-\eqref{F2}
and \eqref{EP2.1B} we have
$$\max\{|Du_1(x)|^{2\gamma_1}, |Du_2(x)|^{2\gamma_2}\}\leq C (1+ |f_1(x)|^{2}+|f_2(x)|^{2}),$$
for some constant $C$ and for all $x$ outside a compact set. Since $f_1\asymp f_2$ outside a compact set, the second conclusion follows from the above display and \eqref{EL2.1B}. Hence this completes the proof.
\end{proof}

We now first establish the uniqueness and then discuss the existence
results, that is, we assume \cref{T1.1,T1.2} and prove \cref{T1.3} first,
and then we prove \cref{T1.1,T1.2}.
\subsection{Uniqueness}
We begin by introducing a few notations. By $\bm{g}=(g_1, g_2)\in 
\Cc^2(\RN\times\{1,2\})$ we mean $g_i\in \Cc^2(\RN)$ for $i=1,2$.
Define the operator $\bAg=(\Ag_1,\Ag_2):\Cc^2(\RN\times\{1,2\})\to
\Cc^2(\RN\times\RN\times\{1,2\})$ by %
\nomenclature[Bc]{$\bAg, \Ag_k$}{extended generator, see \eqref{EAg}}%
\begin{equation}\label{EAg}
\Ag_k \bm{g}(x,\xi) \,\df\, \Delta g_k(x) - \xi\cdot \grad g_k(x)
+ \alpha_k (x) \sum_{j=1}^2(g_j(x)-g_k(x)),\quad
(x,\xi)\in\RN\times\RN,\, k=1,2,
\end{equation}
with $\bm{g}=(g_1,g_2)\in \Cc^2(\RN\times\{1,2\})$.
Also, $\Cc^2_c(\RN\times\{1,2\})$ denotes the class of functions
in $\Cc^2(\RN\times\{1,2\})$ with compact support.
Let $\cP(\RN\times\RN\times\{1,2\})$ denotes the set of Borel probability measures
$\bm\mu=(\mu_1,\mu_2)$, with $\mu_i=\bm\mu(\cdot\times\{i\})$ being a 
sub-probability measure.
For a function $\bm{h}\colon\RN\times\RN\to\RR^2$
we use the notation %
\nomenclature[Bd]{$\bm\mu(\bm{h})$}{action of $\bm{\mu}$ on $\bm{h}$}%
\begin{equation*}
\bm\mu(\bm{h})
\,\df\,\int_{\RN\times\RN} \bigl\langle \bm{h}(x,\xi)\,,\bm\mu(\D{x},\D\xi)\bigr\rangle
\,=\,
\sum_{k=1}^2 \int_{\RN\times\RN} h_k(x,\xi)\mu_k(\D{x},\D\xi)\,.
\end{equation*}
We define
\begin{equation*}
\eom \,\df\, \Bigl\{\bm\mu\in\cP(\RN\times\RN\times\{1,2\})\,\colon
\bm\mu\bigl(\bAg\bm{g}\bigr)
=0\quad \forall\; \bm{g}\in\Cc^2_c(\RN\times\{1,2\})\Bigr\}\,.
\end{equation*}
Let
\begin{equation}\label{Cost}
F_k(x,\xi) \,\df\, f_k(x) + \ell_k(x, \xi)\quad k=1,2,
\end{equation}
where $\ell_k$ is given by \cref{A1.1}.
Now define
\begin{equation}\label{EMF}
\eom_{\bm{F}}\,\df\, \bigl\{\bm\mu\in\eom\,\colon \bm\mu(\bm{F})<\infty\bigr\}\,,
\end{equation}
and %
\nomenclature[Be]{$\bar\lambda$}{optimal value, see \eqref{LP}}%
\begin{equation}\label{LP}
\overline\lambda \,\df\, \inf_{\mu\in\eom}\;\bm\mu(\bm{F})
= \inf_{\mu\in\eom_{\bm{F}}}\;\bm\mu(\bm{F})\,.\tag{\sf{LP}}
\end{equation}
In \cref{L2.3} below we show that $\eom_{\bm{F}}$ is non-empty.
Our next result shows that $\lamstr$ in \eqref{lam} is smaller than $\bar\lambda$.

\begin{lemma}\label{L2.2}
Consider the setting of \cref{T1.3}. Then we must have 
$\lamstr\leq\bar\lambda$.
\end{lemma}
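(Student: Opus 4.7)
The plan is a classical weak-duality argument: test the defining identity $\bm\mu(\bAg\bm g)=0$ (valid for $\bm g\in\Cc^2_c(\RN\times\{1,2\})$) against a truncation of an arbitrary subsolution, combine it with a pointwise inequality coming from convex duality, and let the truncation exhaust $\RN$. I would fix a subsolution $(\bm u,\lambda)$ of \eqref{EP} and a measure $\bm\mu\in\eom_{\bm F}$; the conclusion then reduces to showing $\lambda\le\bm\mu(\bm F)$.

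The pointwise ingredient comes from Fenchel--Young: for every $\xi\in\RN$ the Legendre duality in \cref{A1.1} gives $\xi\cdot\grad u_k(x)-\ell_k(x,\xi)\le H_k(x,\grad u_k(x))$, so the subsolution inequality rearranges (using the notation in \eqref{EAg} and \eqref{Cost}) to
\begin{equation*}
\lambda\,\le\,\Ag_k\bm u(x,\xi)+F_k(x,\xi),\qquad (x,\xi,k)\in\RN\times\RN\times\{1,2\}.\quad (\star)
\end{equation*}
Next I would pick a radial cutoff $\chi_R\in\Cc^\infty_c(\RN;[0,1])$ with $\chi_R\equiv 1$ on $B_R$, $\mathrm{supp}\,\chi_R\subset B_{2R}$, $|\grad\chi_R|\le C/R$, $|\Delta\chi_R|\le C/R^2$; then $\bm g^R\df\chi_R\bm u\in\Cc^2_c$ and Leibniz gives $\Ag_k\bm g^R=\chi_R\Ag_k\bm u+u_k(\Delta\chi_R-\xi\cdot\grad\chi_R)+2\grad u_k\cdot\grad\chi_R$ (the coupling terms collapse because $\chi_R$ is scalar). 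Integrating $(\star)$ against $\chi_R\,d\bm\mu\ge0$ and using $\bm\mu(\bAg\bm g^R)=0$ yields
\begin{equation*}
\lambda\sum_{k=1}^{2}\int\chi_R\,d\mu_k\,\le\,\sum_{k=1}^{2}\int\chi_R F_k\,d\mu_k\;-\;E_R,
\end{equation*}
where $E_R$ gathers the three ``boundary-layer'' integrals of $u_k\Delta\chi_R$, $-u_k\xi\cdot\grad\chi_R$, and $2\grad u_k\cdot\grad\chi_R$ against $\mu_k$, each supported on $B_{2R}\setminus B_R$.

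Taking $R\to\infty$, the LHS will converge to $\lambda$ since $\bm\mu$ is a probability measure, and dominated convergence---using $\bm\mu(\bm F)<\infty$ together with the uniform lower bound on $f_k$---will send the first term on the RHS to $\bm\mu(\bm F)$. The main obstacle will be showing $E_R\to 0$. I would first perform the harmless scalar shift $\bm u\mapsto\bm u-c$ (which preserves the subsolution and the value of $\lambda$) to normalize $\inf\bm u$; then the gradient estimate \eqref{EP2.1B} of \cref{P2.1} together with \eqref{F1}--\eqref{F2} gives the pointwise bound $|\grad u_k|^{\gamma_k}\le C(1+|f_k|)$, which lies in $L^1(\mu_k)$ because $\bm\mu(\bm F)<\infty$, so the $\grad u_k\cdot\grad\chi_R$ integral becomes $O(R^{-1})$ times a vanishing tail of a finite measure. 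The $u_k$-terms will require more care: I would combine \eqref{EP2.1C} for $|u_1-u_2|$ with a barrier argument in the spirit of \cref{L2.1} (leveraging $f_1\asymp f_2$ and coercivity, as in \cref{T1.3}) to obtain a one-sided polynomial bound $u_k\le C(1+f_k^{1/\gamma_k})$, while the superlinearity $\ell_k(x,\xi)\gtrsim|\xi|^{\gamma_k'}$ would provide $|\xi|\in L^1(\mu_k)$ via Young's inequality, so both $u_k\Delta\chi_R$ and $u_k\xi\cdot\grad\chi_R$ contributions vanish in the limit. Passing to the limit would then give $\lambda\le\bm\mu(\bm F)$, and taking the supremum over subsolutions would yield $\lamstr\le\bar\lambda$.
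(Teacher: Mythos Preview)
Your weak-duality strategy is sound, but the execution has a real gap, and it diverges from the paper in a way that matters.

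First, a minor but relevant point: you work with an \emph{arbitrary} subsolution and invoke \eqref{EP2.1B} from \cref{P2.1}, but that gradient estimate is proved only for \emph{solutions} (its proof differentiates the equation). The paper sidesteps this by working from the outset with the specific non-negative solution $\bm u$ at $\lamstr$ supplied by \cref{T1.2}; that already yields $\lamstr\le\bm\mu(\bm F)$, so the ``supremum over subsolutions'' step is unnecessary.

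The substantive issue is your handling of the error $E_R$, specifically the term $\int u_k\,\xi\cdot\grad\chi_R\,d\mu_k$. To make it vanish you assert an upper bound $u_k\le C(1+f_k^{1/\gamma_k})$ ``in the spirit of \cref{L2.1}''. But \cref{L2.1} produces a \emph{lower} bound by placing a subsolution barrier underneath $\bm u$ on a unit ball; the mechanism does not reverse, since an upper barrier would have to dominate $\bm u$ on $\partial B_1(x_0)$, and you have no a priori control there. In fact, under the general hypotheses of \cref{T1.3} the bound $u_k\lesssim f_k^{1/\gamma_k}$ need not hold: integrating $|\grad u_k|\lesssim f_k^{1/\gamma_k}$ along a ray typically overshoots $f_k^{1/\gamma_k}$ at the endpoint (e.g.\ for polynomial $f_k$). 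Without it, you cannot conclude $u_k|\xi|\in L^1(\mu_k)$, and the $1/R$ from $|\grad\chi_R|$ does not compensate for the growth of $u_k$ on $B_{2R}\setminus B_R$.

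The paper avoids needing any upper bound on $\bm u$ by using a different truncation: instead of $\chi_R(x)\,u_k(x)$ it tests with $\chi_r\!\circ u_k$, where $\chi_r$ is a concave $\Cc^2$ function with $\chi_r(t)=t$ for $t\le r$, $\chi_r'\equiv 0$ for $t\ge 3r$, and $|\chi_r''(t)|\le 2/t$. Because $u_k$ is \emph{coercive} (the lower bound from \cref{L2.1}), $\chi_r(u_k)$ has compact support and lies in $\Cc^2_c$. The boundary-layer errors become $\chi_r''(u_k)|\grad u_k|^2$ and a coupling term $G_{r,k}$; the first is controlled by the second conclusion of \cref{L2.1}, namely $|\grad u_k|^2/u_k\le M_3 f_k^{1/\gamma_k}$ (this is where $f_1\asymp f_2$ enters), and the second by \eqref{EP2.1C}. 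Both are then dominated by integrable functions and vanish by dominated convergence. This is the missing idea: compose with a concave cutoff in the \emph{value} of $u_k$ rather than multiply by a cutoff in \emph{space}, so that only coercivity and the ratio $|\grad u_k|^2/u_k$ are needed, not a pointwise upper bound on $u_k$.
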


\begin{proof}
We only consider the case when $\bar\lambda<\infty$, otherwise there is
nothing to prove. Let $\bm\mu\in\eom$ be such that $\bm\mu(\bm{F})<\infty$.
Since $\bm\mu\in\eom$ we have
\begin{equation}\label{EL2.2A}
\bm\mu(\bAg\bm{g})=\sum_{k=1}^2\int_{\RN\times\RN}\Ag_k\bm{g}(x, \xi)
\mu_k(\D{x},\D\xi)=0\quad \text{for all}\; \bm{g}\in \Cc^2_c(\RN\times\{1,2\}).
\end{equation}
Let $\bm{u}=(u_1, u_2)$ be a non-negative solution to \eqref{EP}
corresponding to $\lamstr$,
that is,
\begin{equation}\label{EL2.2B}
\begin{split}
-\Delta u_1(x) + H_1(x,\grad u_1(x)) + \alpha_1(x)(u_1(x)-u_2(x)) &= f_1(x)-\lamstr \quad \text{in}\; \RN,
\\
-\Delta u_2(x) + H_2(x,\grad u_2(x)) + \alpha_2(x)(u_2(x)-u_1(x)) &= f_2(x)-\lamstr \quad \text{in}\; \RN.
\end{split}
\end{equation}
Existence of $u$ follows from \cref{T1.2}.
From \cref{L2.1} we also know that $u_i, i=1,2,$ are coercive. We would
modify $\bm{u}$ suitably so that it can be used in \eqref{EL2.2A} as a test 
function. To do so, we consider a family of concave functions.

For $r>0$, we let $\chi^{}_r$ be a concave function
in $\Cc^2(\RR)$ such that
$\chi^{}_r(t)= t$ for $t\le r$, and $\chi'_r(t) = 0$ for $t\ge 3r$.
Then $\chi'_r$ and $-\chi''_r$ are nonnegative, and the latter is supported
 on $[r,3r]$.
In addition, we select $\chi^{}_r$ so that
\begin{equation}\label{EL2.2C}
\abs{\chi''_r(t)} \,\le\, \frac{2}{t}\qquad\forall\; t>0\,.
\end{equation}
In particular, we may define $\chi_r$ by specifying
\[
\chi^{''}_r(t)=\left\{
\begin{array}{lll}
\frac{4}{3}\frac{r-t}{r^2} & \text{if}\; r\leq t\leq \frac{3r}{2},
\\[2mm]
-\frac{2}{3r} & \text{if}\; \frac{3r}{2}\leq t\leq \frac{5r}{2},
\\[2mm]
\frac{4}{3}(\frac{t}{r^2}-\frac{3}{r}) & \text{if}\; \frac{5r}{2}\leq t\leq 3r.
\end{array}
\right.
\]
Using \eqref{EL2.2B} we now compute
\begin{equation}\label{EL2.2D}
\begin{aligned}
&\Delta \chi^{}_r(u_k) - \xi\cdot \grad\chi^{}_r(u_k)
+\alpha_k\sum_{j=1}^2 (\chi^{}_r(u_j)-\chi_r(u_k))\\
&\,=\,
\chi''_r(u_k) \abs{ \grad{u_k}}^2
+ \chi'_r(u_k)\bigl(\Delta u_k -\xi\cdot \grad{u_k}\bigr)
+\alpha_k\sum_{j=1}^2 (\chi^{}_r(u_j)-\chi_r(u_k))\\
&\,=\, \chi''_r(u_k) \abs{ \grad{u_k}}^2
+\chi'_r(u_k)\Bigl(\lamstr 
+ H_k(x, \grad u_k) - f_k
-\xi\cdot\grad{u_k}\Bigr)\\
&\mspace{100mu} + 
\alpha_k\sum_{j=1}^2 \bigl(\chi^{}_r(u_j)-\chi_r(u_k)-
\chi'_r(u_k) (u_j-u_k)\bigr)
\\
&\,=\, \chi''_r(u_k) \abs{ D{u_k}}^2
+\chi'_r(u_k)\Bigl(\lamstr - f_k- \ell_k(x, \xi)\Bigr)\\
&\quad + \chi'_r(u_k)\Bigl(\ell_k(x, \xi) -\xi\cdot \grad{u_k}+
H_k(x, \grad u_k)\Bigr)
 +\alpha_k\sum_{j=1}^2 \bigl(\chi^{}_r(u_j)-\chi_r(u_k)-
\chi'_r(u_k) (u_j-u_k)\bigr)
\,.
\end{aligned}
\end{equation}
Thus, defining
\begin{equation*}
G_{r,k}[\bm{u}](x)\,\df\,
\alpha_k\sum_{j=1}^2 \bigl(\chi^{}_r(u_j)-\chi_r(u_k)-
\chi'_r(u_k) (u_j-u_k)\bigr)\,,
\end{equation*}
and integrating \cref{EL2.2D} with respect to a $\bm\mu$, we obtain
\begin{equation}\label{EL2.2E}
\begin{aligned}
\sum_{k=1}^n\int_{\RN\times\RN}
&\chi'_r\bigl(u_k(x)\bigr)
\Bigl(f_k(x)+ \ell_k(x,\xi)- \lamstr\Bigr)
\,\mu_k(\D{x},\D{\xi})\\
 &\,=\,
\sum_{k=1}^2\int_{\RN\times\RN}
\chi'_r\bigl(u_k(x)\bigr)\Bigl(\ell_k(x, \xi) -\xi\cdot \grad{u_k}+
H_k(x, \grad u_k)\Bigr)
\,\mu_k(\D{x},\D{\xi})\\
&\mspace{50mu}+\sum_{k=1}^2\int_{\RN\times\RN}
\Bigl(\chi''_r\bigl(u_k(x)\bigr) \babs{ D{u_k}(x)}^2+G_{r,k}[\bm{u}](x)\Bigr)
\,\mu_k(\D{x},\D{\xi})\,.
\end{aligned}
\end{equation}
Next we show that the last term on the rhs of \eqref{EL2.2E} goes to
$0$ as $r\to\infty$. Since $f_1\asymp f_2$ outside a compact set and
$\bm\mu(\bm{f})=\sum_{k=1}^2\int_{\RN\times\RN} f_k(x) \mu_k(\D{x},
\D\xi)<\infty$, we obtain
\begin{equation}\label{EL2.2F}
\int_{\RN\times\RN} (|f_1(x)|+|f_2(x)|)\mu_1(\D{x}, \D\xi)<\infty,
\quad \text{and}\quad 
\int_{\RN\times\RN} (|f_1(x)|+|f_2(x)|)\mu_2(\D{x}, \D\xi)<\infty.
\end{equation}
Therefore, using \cref{L2.1} and \eqref{EL2.2C}, we get 
\begin{align*}
\sum_{k=1}^2\int_{\RN\times\RN}
|\chi''_r\bigl(u_k(x)\bigr)| \babs{ D{u_k}(x)}^2 \mu_k(\D{x},\D\xi)
&\leq \sum_{k=1}^2\int_{\RN\times\RN} \Ind_{\{r<u_k(x)<3r\}} \frac{2}{u_k(x)} \babs{ D{u_k}(x)}^2\mu_k(\D{x},\D\xi)
\\
&\leq \kappa \sum_{k=1}^2\int_{\RN\times\RN} \Ind_{\{r<u_k(x)<3r\}} 
|f_k(x)|^{\nicefrac{1}{\gamma_i}}\mu_k(\D{x},\D\xi),
\end{align*}
for some constant $\kappa$. Since $u_k, k=1,2,$ are coercive, using
dominated convergence theorem it follows that the rhs of the above display tends to $0$ as $r\to\infty$. Again, since $\chi'\leq 1$, it follows that
$$ |G_{r,k}[\bm{u}](x)|\leq 2\upalpha_0\Ind_{A^c_r}(x) |u_1(x)-u_2(x)|
\quad \text{for all}\; x\in\RN, \; k=1,2,$$
where $A_r=\{x\; :\; u_2(x)\vee u_1(x)\leq r\}$. Using 
\eqref{F1}-\eqref{F2} and \eqref{EP2.1C} we then have
$$ |G_{r,k}[\bm{u}](x)|\leq \kappa_1\Ind_{A^c_r}(x) (|f_1(x)|+|f_2(x)|)
\quad \text{for all}\; x\in\RN, \; k=1,2,$$
for some constant $\kappa_1$. Again using \eqref{EL2.2F} and dominated
convergence theorem we thus get
$$\lim_{r\to\infty}\sum_{k=1}^2\int_{\RN\times\RN}
G_{r,k}[\bm{u}](x)\,\mu_k(\D{x},\D{\xi})=0\,.$$
From our construction, it also follows that $\chi'_{3^n}$ is an
increasing sequence. Therefore, letting $r=3^n\to\infty$ in 
\eqref{EL2.2E} and applying monotone convergence theorem we obtain
\begin{equation}\label{EL2.2G}
\bm\mu(\bm{F})-\lamstr=
\sum_{k=1}^2\int_{\RN\times\RN}
\Bigl(\ell_k(x, \xi) -\xi\cdot \grad{u_k}+H_k(x, \grad u_k)\Bigr)
\,\mu_k(\D{x},\D{\xi})\geq 0\,.
\end{equation}
Since $\bm\mu$ is arbitrary, this proves the lemma.
\end{proof}
Next we show that $\eom_{\bm{F}}$ is non-empty.
\begin{lemma}\label{L2.3}
Suppose that $\bm{u}$ is a coercive, nonnegative solution to
\eqref{EP} with eigenvalue $\lambda$. Define
$$\xi_k(x) = \grad_p H_k(x, \grad u_k(x))\quad k=1,2.$$
Then there exists a Borel probability measure $\bm\nu=(\nu_1,\nu_2)$ 
on $\RN\times\{1,2\}$
so that
$$\bm\mu_{\bm{u}}=(\mu_{1, \bm{u}}, \mu_{2, \bm{u}})\in \eom_{\bm{F}}
\quad \text{where}\quad 
\mu_{k, \bm{u}}\df \nu_k (\D{x})\delta_{\xi_k(x)}(\D\xi).$$ 
Furthermore, $\bar\lambda\leq\lambda$.
\end{lemma}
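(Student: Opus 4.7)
The key pointwise identity comes from Legendre duality. Since $\xi_k(x) = \grad_p H_k(x,\grad u_k(x))$ is the maximizer in $\xi\mapsto \xi\cdot\grad u_k(x) - \ell_k(x,\xi)$, we have $H_k(x,\grad u_k(x)) = \xi_k(x)\cdot\grad u_k(x) - \ell_k(x,\xi_k(x))$. Substituting into \eqref{EP} and comparing with \eqref{EAg} yields
$$\Ag_k \bm u(x,\xi_k(x)) \;=\; \lambda - F_k\bigl(x,\xi_k(x)\bigr)\quad\text{on }\RN,\ k=1,2,$$
with $F_k$ as in \eqref{Cost}. This is essentially the converse of the computation in \cref{L2.2}, now with $\bm u$ used to generate a test measure rather than tested against one.

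The plan is to construct $\bm\nu$ as a Cesàro limit of occupation measures of the switching diffusion $(X_t,S_t)$ on $\RN\times\{1,2\}$ governed by drift $-\xi_{S_t}(X_t)$, identity diffusion coefficient, and jump intensities $\alpha_k$. Since $\xi_k$ is continuous (as $H_k\in\Cc^1$ and $\bm u\in\Cc^2$) and the diffusion is nondegenerate, the martingale problem for $\bAg$ under the feedback $\xi=\xi_{k}(\cdot)$ admits a weak solution up to a possible explosion time. Applying Itô's formula to $u_{S_t}(X_t)$ up to $T\wedge\tau_n$, where $\tau_n$ is the first exit time of $B_n$, and using the identity above gives
$$\Exp\bigl[u_{S_{T\wedge\tau_n}}(X_{T\wedge\tau_n})\bigr] + \Exp\!\int_0^{T\wedge\tau_n}\! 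F_{S_t}(X_t,\xi_{S_t}(X_t))\,\D t \;=\; u_{S_0}(X_0) + \lambda\,\Exp[T\wedge\tau_n].$$
Nonnegativity of $\bm u$, combined with the lower bound $F_k\geq -C$ (which holds because $\ell_k(x,\xi)\geq -H_k(x,0)\geq -C_1$ and $\inf f_k>-\infty$ in the settings where this lemma is applied), lets me pass $n\to\infty$ to deduce nonexplosion together with
$$\frac{1}{T}\Exp\!\int_0^T\! F_{S_t}(X_t,\xi_{S_t}(X_t))\,\D t \;\leq\; \lambda + \frac{u_{S_0}(X_0)}{T}.$$

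Define the lifted occupation measures $\bm\mu^T(A\times B\times\{k\}) = T^{-1}\int_0^T \Prob(X_t\in A,\,\xi_k(X_t)\in B,\,S_t=k)\,\D t$, which are automatically of the form $\nu_k^T(\D x)\,\delta_{\xi_k(x)}(\D\xi)$. The bound on $\bm\mu^T(\bm F)$ together with coercivity of $F_k$ in $x$ (from coercivity of $f_k$ and $\ell_k\geq -C$) gives tightness of $\{\bm\mu^T\}_{T\geq 1}$, and continuity of $\xi_k$ ensures that every weak limit $\bm\mu_{\bm u}$ retains the structural form $\nu_k(\D x)\delta_{\xi_k(x)}(\D\xi)$. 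For any $\bm g\in\Cc^2_c(\RN\times\{1,2\})$, Itô's formula applied to $g_{S_t}(X_t)$ gives $\bm\mu^T(\bAg\bm g) = T^{-1}\Exp[g_{S_T}(X_T)-g_{S_0}(X_0)]\to 0$ because $\bm g$ is bounded, so $\bm\mu_{\bm u}\in\eom$; lower semicontinuity and Fatou applied to $\bm F$ (bounded below) then yield $\bm\mu_{\bm u}(\bm F)\leq\lambda<\infty$. Hence $\bm\mu_{\bm u}\in\eom_{\bm F}$, and by the definition \eqref{LP}, $\bar\lambda\leq\bm\mu_{\bm u}(\bm F)\leq\lambda$.

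The principal obstacle is the tightness/nonexplosion step: coercivity of $f_k$ is what upgrades the integral bound on $\bm\mu^T(\bm F)$ into tightness of the spatial marginals, while the lower bound on $F_k$ justifies passing from $T\wedge\tau_n$ to $T$ via monotone convergence. A secondary technical point is the well-posedness of the martingale problem with a merely continuous drift $\xi_k$: it is standard for uniformly nondegenerate diffusions but requires careful handling of the switching mechanism, and one invokes \cref{L2.1}-type bounds to ensure finite moments of the Lagrangian along trajectories.
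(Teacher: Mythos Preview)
Your argument is correct and follows the same overall strategy as the paper---construct $\bm\nu$ as a weak limit of mean empirical measures of the switching diffusion driven by the feedback $\xi_k(\cdot)$, verify $\bm\mu_{\bm u}\in\eom$ via Dynkin/It\^o on compactly supported test functions, and then show $\bm\mu_{\bm u}(\bm F)\le\lambda$. The one genuine methodological difference lies in this last estimate. You obtain it probabilistically: apply It\^o to $u_{S_t}(X_t)$ up to $T\wedge\tau_n$, drop the nonnegative boundary term, use $F_k\ge -C$ and coercivity of $\bm u$ to get nonexplosion and the uniform bound $\bm\mu^T(\bm F)\le\lambda+T^{-1}u_{S_0}(X_0)$, and finish by lower semicontinuity along the weak limit. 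The paper instead works analytically \emph{after} the limit measure $\bm\nu$ has been constructed: it integrates the identity \eqref{EL2.3A} against $\bm\nu$ with the truncated test function $\chi_r(\bm u)$ from \cref{L2.2}, uses concavity of $\chi_r$ to discard the second-derivative and coupling error terms with a sign, and then applies Fatou as $r\to\infty$. Your route is arguably more self-contained (no need for the $\chi_r$ machinery) and yields tightness and the energy bound in one stroke; the paper's route has the virtue of reusing the exact computation \eqref{EL2.2D} already set up in \cref{L2.2}, and avoids any discussion of explosion since $\chi_r(\bm u)$ is bounded. Both are standard for Lyapunov-type arguments and lead to the same conclusion.
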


\begin{proof}
Since $H_k$ is the Fenchel–Legendre transformation of $\ell_k$, it is
well known that
\begin{equation}\label{duality}
H_k(x, p)= p\cdot\xi -\ell_k(x, \xi) \quad \text{for}\; \xi=\grad_p H_k(x, p),\end{equation}
for $k=1,2$. Therefore, we can rewrite \eqref{EP} as
\begin{equation}\label{EL2.3A}
\begin{cases}
\Delta u_1(x) - \xi_1(x)\cdot \grad u_1(x) - \alpha_1(x)(u_1(x)-u_2(x)) &= \lambda - F_1(x, \xi_1(x)) \quad \text{in}\; \RN,
\\
\Delta u_2(x) - \xi_2(x)\cdot \grad u_2(x) - \alpha_2(x)(u_2(x)-u_1(x)) &= \lambda-F_2(x, \xi_2(x)) \quad \text{in}\; \RN,
\end{cases}
\end{equation}
where $\bm{F}$ is given by \eqref{Cost}. We define the extended
generator
${\bm\Ag}_{\bm{u}}=(\Ag_{1,\bm{u}},\Ag_{2,\bm{u}}): \Cc^2(\RN\times\{1,2\})\to
\Cc^2(\RN\times\{1,2\})$ by %
\nomenclature[Ca]{${\bm\Ag}_{\bm{u}}, \Ag_{k,\bm{u}}$}{extended generator
corresponding to the solution $\bm{u}$, see \eqref{EL2.3B}}%
\begin{equation}\label{EL2.3B}
\Ag_{k,\bm{u}} \bm{g}(x) \,\df\, \Delta g_k(x) - \xi_k(x)\cdot \grad g_k(x)
+ \alpha_k (x) \sum_{j=1}^2(g_j(x)-g_k(x))\,,\quad
(x,\xi)\in\RN\times\RN,\, k=1,2.
\end{equation}
Since $\bm{u}, \bm{F}$ are coercive, there exists a switching diffusion
$(X_t, S_t)$ associated to the generator ${\bm\Ag}_{\bm{u}}$ (cf.
\cite[Chapter~5]{book}). Furthermore, the mean empirical measures
of $(X_t, S_t)$ will be tight and therefore, should have a limit point
(cf. \cite[Lemma~2.5.3]{book}).
Let $\bm\nu=(\nu_1, \nu_2)$ be one such limit points. It is also
standard to show that 
\begin{equation}\label{EL2.3C}
\sum_{k=1}^2\int_{\RN} \Ag_{k, \bm{u}}\bm{g}(x) \nu_k(\D{x})=0
\end{equation}
for all $\bm{g}\in \Cc^2_c(\RN\times\{1,2\})$. Hence it follows that
$\bm{\mu}_{\bm u}\in\eom$.

To prove the second part, we consider the concave function $\chi_r$
from \cref{L2.2}. Since $\chi_r$ is concave we have $\chi''_r\leq 0$ 
and 
$$\chi_r(u_j)-\chi_r(u_k)-\chi'_r(u_k)(u_j-u_k)\leq 0.$$
Thus, the calculation of \eqref{EL2.2D} and \eqref{duality}-\eqref{EL2.3A} gives
\begin{align*}
&\Delta \chi^{}_r(u_k) - \xi_k\cdot \grad\chi^{}_r(u_k)
+\alpha_k\sum_{j=1}^2 (\chi^{}_r(u_j)-\chi_r(u_k))
\\
&\leq \chi'_r(u_k) (\lambda-F_k(x, \xi_k(x)).
\end{align*}
Integrating both sides with $\nu_k$ and summing over $k$, we obtain
from \eqref{EL2.3C} that
$$\sum_{k=1}^2 \int_{\RN}\chi'_r(u_k) F_k(x, \xi_k(x))\nu_k(\D{x}) \leq \lambda.$$
Now letting $r\to\infty$ and using Fatou's lemma we obtain
$$\bm\mu_{\bm u}(\bm{F})\leq\lambda.$$
Thus, $\bm\mu_{\bm u}\in\eom_{\bm{F}}$ and $\bar\lambda\leq\lambda$.
\end{proof}

We note that the proof of \cref{L2.3} also works for
non-negative $\Cc^2$ super-solutions.
Combining the above result with
\cref{L2.2} we get the following corollary.

\begin{corollary}
Under the setting of \cref{T1.3} we have
\begin{equation*}
\lamstr=\inf\{\lambda\in\RR\; :\; \exists\; \text{nonnegative}\;
\bm{u}\in \Cc^2(\RN\times\{1,2\})
\; \text{such that}\; (\bm{u}, \lambda)\; \text{is a super-solution to}\; \eqref{EP}\}.
\end{equation*}
Note that the existence of a non-negative solution $\bm{u}$ for the value
$\lamstr$ follows from \cref{T1.2}.
\end{corollary}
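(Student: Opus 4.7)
The plan is to establish the claimed characterization by sandwiching $\lamstr$ between the two natural bounds produced by the preceding lemmas. Write $\lambda^{\sharp}$ for the infimum on the right-hand side of the claimed identity.

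\medskip

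\textbf{Upper bound $\lambda^{\sharp}\le\lamstr$.} This is essentially free. \cref{T1.2} furnishes a non-negative $\Cc^2$ solution $\bm{u}$ of \eqref{EP} with eigenvalue $\lamstr$, and any solution is in particular a super-solution. Hence $\lamstr$ lies in the set whose infimum defines $\lambda^{\sharp}$, giving $\lambda^{\sharp}\le\lamstr$.

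\medskip

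\textbf{Lower bound $\lamstr\le\lambda^{\sharp}$.} Fix any $\lambda$ admitting a non-negative super-solution $\bm{u}\in\Cc^2(\RN\times\{1,2\})$ of \eqref{EP}. I would invoke the remark just before the corollary: the construction of \cref{L2.3} goes through for $\bm{u}$, producing a measure $\bm\mu_{\bm u}\in\eom_{\bm F}$ with $\bm\mu_{\bm u}(\bm F)\le\lambda$, so $\bar\lambda\le\lambda$. Chaining this with \cref{L2.2} (which gives $\lamstr\le\bar\lambda$ under the hypotheses of \cref{T1.3}) yields $\lamstr\le\lambda$; taking the infimum over admissible $\lambda$ gives $\lamstr\le\lambda^{\sharp}$.

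\medskip

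\textbf{The main obstacle} is justifying that \cref{L2.3} really does extend to non-negative super-solutions, as claimed in the remark. Two points require verification. First, one needs a coercivity statement analogous to \cref{L2.1} for super-solutions. Rewriting the super-solution inequality as $-\Delta u_k+H_k(x,\grad u_k)+\alpha_k(u_k-u_j)\ge\tilde f_k$ with $\tilde f_k\df f_k-\lambda$ (still coercive, since $\lambda$ is a constant), the comparison argument in the proof of \cref{L2.1}—which compares $\bm u$ against explicit local barriers $\psi_i$ built from $\tilde f_k$ and uses only the fact that $\bm u$ dominates $\bm\psi$ on the boundary of $B_1(x_0)$—carries over verbatim, so $\bm u$ is coercive and moreover satisfies the gradient bound used in the concave-truncation step. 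Second, the construction of $\bm\nu$ via the switching diffusion with generator $\bm\Ag_{\bm u}$ depends only on the measurable vector field $\xi_k=\grad_p H_k(x,\grad u_k)$, not on equality in \eqref{EP}; the invariance relation \eqref{EL2.3C} remains valid as stated. The passage through $\chi_r$ then preserves the inequality direction, since in the display obtained from the super-solution one picks up $\chi''_r(u_k)|\grad u_k|^2\le 0$ and, by concavity, $\chi_r(u_j)-\chi_r(u_k)-\chi'_r(u_k)(u_j-u_k)\le 0$, while multiplication by $\chi'_r\ge 0$ preserves the $\le$. Integrating against $\nu_k$, summing, and letting $r\to\infty$ via Fatou gives $\bm\mu_{\bm u}(\bm F)\le\lambda$ as needed.
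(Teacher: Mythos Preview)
Your approach is correct and matches the paper's exactly: combine \cref{L2.2} ($\lamstr\le\bar\lambda$) with the extension of \cref{L2.3} to non-negative super-solutions ($\bar\lambda\le\lambda$), while \cref{T1.2} supplies the reverse inequality. Your justification of the super-solution extension is also sound, with one small caveat: the gradient bound in the second part of \cref{L2.1} relies on \cref{P2.1}, whose Bernstein-type proof uses the \emph{equation} and does not obviously transfer to super-solutions via the comparison argument you cite---but this is harmless, since the $\chi_r$-truncation in \cref{L2.3} only drops the $\chi''_r$ and $G_{r,k}$ terms by sign and never invokes that gradient bound (unlike the corresponding step in \cref{L2.2}); only the coercivity of $\bm u$ is needed, and that you have argued correctly.
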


Now we are ready to establish our uniqueness result.

\begin{theorem}\label{T2.1}
Assume the setting of \cref{T1.3}. Let $(\bm{u}, \lambda)$ be a solution
to \eqref{EP} and $\bm{u}$ is non-negative. Then
\begin{itemize}
\item[(a)] $\lambda=\lamstr=\bar\lambda=\bm\mu_{\bm{u}}(\bm{F}),$
where $\bm\mu_{\bm{u}}$ is given by \cref{L2.3}
\item[(b)] Suppose that $(\tilde{\bm u}, \tilde\lambda)$
 is another solution to \eqref{EP} and $\tilde{\bm u}$ is non-negative,
 then $\tilde\lambda=\lamstr$ and $\tilde{\bm u}=\bm{u} + c$ for some constant $c$.
\end{itemize}
\end{theorem}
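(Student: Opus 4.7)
My plan is to combine \cref{L2.2,L2.3} to pin down the three quantities in part~(a), and then exploit saturation of the Fenchel--Young inequality inside the computation of \cref{L2.2} to prove uniqueness in part~(b).

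\emph{Part (a).} The pair $(\bm u,\lambda)$ is in particular a subsolution of~\eqref{EP}, so by~\eqref{lam} we have $\lambda\leq\lamstr$. Since $\bm u\geq0$ and the $f_i$ are coercive, \cref{L2.1} shows that $\bm u$ is coercive, so \cref{L2.3} produces $\bm\mu_{\bm u}\in\eom_{\bm F}$ with $\bar\lambda\leq\bm\mu_{\bm u}(\bm F)\leq\lambda$. On the other hand, \cref{L2.2} gives $\lamstr\leq\bar\lambda$ (applied to the non-negative coercive solution at $\lamstr$ supplied by \cref{T1.2}). Concatenating
\[
\lambda \,\leq\, \lamstr \,\leq\, \bar\lambda \,\leq\, \bm\mu_{\bm u}(\bm F) \,\leq\, \lambda
\]
collapses every inequality to an equality, proving~(a).

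\emph{Part (b).} By~(a) applied to $(\tilde{\bm u},\tilde\lambda)$ we already have $\tilde\lambda=\lamstr$, and \cref{L2.1} makes both $\bm u$ and $\tilde{\bm u}$ coercive. To compare them, I rerun the cut-off computation of \cref{L2.2} with $\tilde{\bm u}$ in place of the reference solution and test against the specific measure $\bm\mu_{\bm u}$ from \cref{L2.3}, whose fibres are Dirac masses $\delta_{\xi_k(x)}$ with $\xi_k(x)=\grad_p H_k(x,\grad u_k(x))$. The analogue of~\eqref{EL2.2G} reads
\[
0 \,=\, \bm\mu_{\bm u}(\bm F)-\lamstr \,=\, \sum_{k=1}^{2}\int_{\RN}\Bigl(\ell_k(x,\xi_k(x)) \,-\, \xi_k(x)\cdot\grad\tilde u_k(x) \,+\, H_k(x,\grad\tilde u_k(x))\Bigr)\,\nu_k(\D x).
\]
The integrand is pointwise non-negative by Fenchel--Young, so it vanishes $\nu_k$-almost everywhere, and the equality case forces $\xi_k(x)=\grad_p H_k(x,\grad\tilde u_k(x))$ on $\supp\nu_k$. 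Since $\xi_k(x)=\grad_p H_k(x,\grad u_k(x))$ and $\grad_p H_k(x,\cdot)$ is injective by strict convexity of $H_k$ in $p$, we conclude $\grad u_k=\grad\tilde u_k$ on $\supp\nu_k$.

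To finish I propagate this identity to all of $\RN$. Each $\nu_k$ satisfies the formal adjoint of the uniformly elliptic weakly coupled system generated by $\bm\Ag_{\bm u}$; the bounds $\alpha_k\geq\upalpha_0^{-1}$ together with the non-degenerate Brownian component make the switching diffusion irreducible, so the strong maximum principle for quasi-monotone systems yields strictly positive continuous densities for $\nu_1,\nu_2$ on $\RN$. By continuity of $\grad(u_k-\tilde u_k)$, each $c_k\df u_k-\tilde u_k$ is a constant; subtracting the $k$-th equation of~\eqref{EP} for $\tilde{\bm u}$ from that for $\bm u$, the Laplacian and Hamiltonian terms cancel, leaving $\alpha_k(x)(c_k-c_{3-k})=0$, which forces $c_1=c_2$ and hence $\tilde{\bm u}=\bm u+c$ for the common value~$c$. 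I expect the main obstacle to be precisely this support/positivity statement: if the coupled strong maximum principle argument proves delicate, a safe fallback is to establish irreducibility of $(X_t,S_t)$ directly---the Brownian noise lets $X_t$ reach any open set and the positive switching rates let $S_t$ visit both states---so $\nu_k$ charges every open set, which combined with continuity of $\grad u_k-\grad\tilde u_k$ still yields the identity everywhere.
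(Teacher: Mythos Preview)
Your proof of part~(a) is correct and coincides with the paper's argument: it simply makes the chain of inequalities from \cref{L2.2,L2.3} explicit.

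For part~(b) your argument is also correct, but it follows a genuinely different route from the paper. You test the cut-off identity \eqref{EL2.2G}---rerun with the second solution $\tilde{\bm u}$ in the role of the reference solution---against the specific measure $\bm\mu_{\bm u}$, and read off saturation in Fenchel--Young to get $\xi_k=\grad_p H_k(x,\grad\tilde u_k)$; strict convexity of $H_k(x,\cdot)$ then forces $\grad u_k=\grad\tilde u_k$ on $\supp\nu_k$. The paper instead builds an auxiliary measure $\widehat{\bm\mu}\in\eom$ by averaging the densities $\rho_k,\tilde\rho_k$ of $\nu_k,\tilde\nu_k$ and taking the convex combination $v_k=\zeta_k\xi_k+\tilde\zeta_k\tilde\xi_k$ of the feedbacks; since both $\bm\mu_{\bm u}$ and $\bm\mu_{\tilde{\bm u}}$ are optimal in \eqref{LP}, the inequality $\widehat{\bm\mu}(\bm F)\ge\tfrac12\bm\mu_{\bm u}(\bm F)+\tfrac12\bm\mu_{\tilde{\bm u}}(\bm F)$ combined with convexity of $\ell_k$ in $\xi$ forces equality, and strict convexity of $\ell_k$ then yields $\xi_k=\tilde\xi_k$. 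Your path is shorter---it avoids the construction of $\widehat{\bm\mu}$ entirely---while the paper's convex-analytic detour has the mild advantage of using only strict convexity of $\ell_k$ (already assumed in (A1)) rather than injectivity of $\grad_p H_k$; in the present setting both are available, so this is not a real distinction.

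Both approaches hinge on the same final ingredient, namely that $\nu_k$ has a strictly positive density on $\RN$. The paper dispatches this by citing \cite[Theorem~5.3.4]{book}, which is exactly the statement you need; your informal justification via the strong maximum principle for the adjoint system or irreducibility of the switching diffusion points at the right mechanism, and invoking that reference would close the gap cleanly.
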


\begin{proof}
(a) follows from \cref{L2.2,L2.3} and \eqref{EL2.2G}. So we consider
(b). Using \cref{L2.3}, we find a Borel probability measure 
$\tilde{\bm\nu}=(\tilde\nu_1, \tilde\nu_2)$
such that for
$$\tilde{\bm\mu}_{\tilde{\bm{u}}}=(\tilde\mu_{1, \tilde{\bm u}}, 
\tilde\mu_{2, \tilde{\bm u}}) \quad \text{with}\quad 
\tilde{\mu}_{k, \tilde{u}}\df \tilde{\nu}_k (\D{x})\delta_{\tilde\xi_k(x)}(\D\xi),\quad \tilde\xi_k(x)=\grad_p H_k(x, \grad \tilde{u}_k),$$ 
we have $\tilde\lambda=\tilde{\bm\mu}_{\tilde{\bm u}}(\bm{F})=\lamstr$.
Again, by \cite[Theorem~5.3.4]{book}, there exist strictly positive
Borel measurable functions $\bm\rho=(\rho_1, \rho_2)$ and 
$\tilde{\bm \rho}=(\tilde\rho_1, \tilde\rho_2)$ satisfying
\begin{equation}\label{ET2.1A}
\nu_k(\D{x})=\rho_k(x)\D{x}, \quad \tilde{\nu}_k(\D{x})=\tilde\rho_k(x)\D{x}
\quad \text{for}\; k=1,2.
\end{equation}
Let us now define
\begin{equation*}
\begin{gathered}
\zeta_k=\frac{\rho_k}{\rho_k + \tilde\rho_k}, \quad
\tilde\zeta_k= \frac{\tilde\rho_k}{\rho_k+\tilde\rho_k},\quad
v_k(x)=\xi_k(x)\zeta_k(x) + \tilde{\xi}_k(x)\tilde\zeta_k(x),\\
\widehat{\mu}_k(\D{x},\D{\xi})= \frac{1}{2}(\nu_k(\D{x})+\tilde\nu_k(\D{x}))
\delta_{v_k(x)}(\D{\xi})\quad \text{for}\; k=1,2.
\end{gathered}
\end{equation*}
We claim that $\widehat{\bm{\mu}}=(\widehat{\mu}_1, \widehat{\mu}_2)\in\eom$. Consider $\bm{g}=(g_1, g_2)\in C^2_c(\RN\times\{1,2\})$.
We note that 
$$\frac{1}{2}(\nu_k(\D{x})+\tilde\nu_k(\D{x}))
= \frac{1}{2}(\rho_k(x) + \tilde\rho_k(x)) \D{x}\quad \text{for}\; k=1,2.$$
A simple computation then yields
\begin{align*}
&\int_{\RN\times\RN}\Ag_k(x, \xi)\, \widehat{\mu}_k(\D{x},\D\xi)
\\
&\quad = \int_{\RN}\Bigl(
\Delta g_k(x) - v_k(x)\cdot \grad g_k(x)
+ \alpha_k (x) \sum_{j=1}^2(g_j(x)-g_k(x))\Bigr)\frac{1}{2}(\nu_1(\D{x})+\tilde\nu_1(\D{x}))
\\
&\quad = \frac{1}{2}\int_{\RN}\Bigl(
(\rho_k(x)+\tilde\rho_k(x))\Delta g_k(x) - (\xi_k(x)\rho_k(x) + \tilde{\xi}_k(x)\tilde\rho_k(x))\cdot \grad g_k(x)
\\
&\mspace{100mu}+ (\rho_k(x)+\tilde\rho_k(x))\alpha_k (x) \sum_{j=1}^2(g_j(x)-g_k(x))\Bigr)\D{x}
\\
&=\frac{1}{2}\int_{\RN}\Ag_{k, \bm{u}}\bm{g}(x)\,\nu_k(\D{x}) 
+ \frac{1}{2}\int_{\RN}\Ag_{k,\tilde{\bm u}}\bm{g}(x)\,\tilde\nu_k(\D{x}).
\end{align*}
Therefore
$$\sum_{k=1}^2\int_{\RN\times\RN}\Ag_k(x, \xi)\, \widehat{\mu}_k(\D{x},\D\xi)=\frac{1}{2}\left[\bm{\mu}_{\bm u}(\bAg_{\bm u}\bm{g})
+ \bm{\mu}_{\tilde{\bm u}}(\bAg_{\tilde{\bm u}}\bm{g})\right]=0.$$
This proves the claim. Using the convexity of $\ell_k$ in $\xi$ it is
also easily seen that $\widehat{\bm{\mu}}(\bm{F})<\infty$.  Now
from \cref{L2.2,L2.3} we see that $\bm{\mu}_{\bm u}$ and 
$\bm{\mu}_{\tilde{\bm u}}$ are optimal for \eqref{LP}. Thus we have
\begin{align*}
0&\leq \widehat{\bm{\mu}}(\bm{F})-\frac{1}{2}\bm{\mu}_{\bm u}(\bm{F})
-\frac{1}{2} \bm{\mu}_{\tilde{\bm u}}(\bm{F})
\\
&=\frac{1}{2}\sum_{k=1}^2\left[ \int_{\RN} \ell_k(x, v_k(x)) (\rho_k(x)
+\tilde{\rho}_k(x))\D{x} - \int_{\RN} \ell_k(x, \xi_k(x)) \rho_k(x)\D{x}-\int_{\RN} \ell_k(x, \tilde\xi_k(x)) \tilde\rho_k(x)\D{x}\right]
\\
&= \frac{1}{2}\sum_{k=1}^2 \left[ \int_{\RN} \bigl(\ell_k(x, v_k(x))
-\ell_k(x, \xi_k(x))\zeta_k(x) - \ell_k(x, \tilde\xi_k(x))\tilde{\zeta}_k\bigr) (\rho_k(x)+\tilde\rho_k(x))\D{x}\right]\leq 0,
\end{align*}
where the last line follows from the convexity of $\ell_k$ in $\xi$.
Therefore,
$$\sum_{k=1}^2 \left[ \int_{\RN} \Bigl(\ell_k(x, v_k(x))
-\ell_k(x, \xi_k(x))\zeta_k(x) - \ell_k(x, \tilde\xi_k(x))\tilde{\zeta}_k\Bigr) (\rho_k(x)+\tilde\rho_k(x))\D{x}\right]=0.$$
Since $\rho_k, \tilde{\rho}_k$ are strictly positive, and $\ell_k$ is strictly convex, it the follows that $\xi_k=\tilde\xi_k$ for $k=1,2$.
Since $H_k(x, \cdot)$ is strictly convex, by (A1), given $\xi$ there 
exists a unique $p$ satisfying
$$H_k(x, p) = p\cdot \xi -\ell_k(x, \xi).$$
Thus, from \eqref{duality}, we obtain $\grad u_k=\grad \tilde{u}_k$ in $\RN$,
for $k=1,2$. This, of course, implies $u_i= \tilde{u}_i + c_i$ for 
some constant $c_i$, $i=1,2$. Again, subtracting the equations of 
$\bm{u}$ from the equations of $\tilde{\bm u}$ we see that
$\alpha_1(c_1-c_2)=0$ implying $c_1=c_2$. This completes the proof.
\end{proof}

The proof of uniqueness in \cref{T2.1} requires $f_1$ to be comparable
to $f_2$ outside a compact set. This property is crucially used in \cref{L2.1,L2.2}. However, if we impose more structural assumption on
$\bm{f}$ then we could relax the requirement of $f_1\asymp f_2$.
\begin{itemize}
\item[\hypertarget{F}{{(F)}}] Suppose that there exist $\beta_1,\beta_2>1$ satisfying
$$C^{-1}_4 |x|^{\beta_i}- C_4\leq f_i(x)\leq C_4(|x|^{\beta_i}+1),\quad x\in\RN,$$
for some $C_4>0$, where 
$$\beta_2\leq \beta_1\frac{\gamma_1+1}{2},
\quad \beta_1\leq \beta_2\frac{\gamma_2+1}{2},
\quad 
\max\left\{\frac{\beta_1(\gamma_1+1)}{2\gamma_1}, \frac{\beta_2(\gamma_2+1)}{2\gamma_2}\right\}\leq \beta_1\wedge\beta_2 - 1\,.$$
\end{itemize}
As a consequence of (F) it follows that 
\begin{equation}\label{E2.22}
|f_2(x)|^{\nicefrac{2}{\gamma_1}}\leq \kappa(1+|f_1(x)|^{1+\gamma^{-1}_1})
\quad \text{and} \quad
|f_1(x)|^{\nicefrac{2}{\gamma_2}}\leq \kappa(1+|f_2(x)|^{1+\gamma^{-1}_2})
\end{equation}
for some $\kappa>0$. \cref{T2.1} can be improved as follows.
\begin{theorem}\label{T-Ext}
Suppose that \cref{A1.1}, \cref{A1.2} and \hyperlink{F}{(F)} hold.
Then the conclusions of \cref{T2.1} hold true.
\end{theorem}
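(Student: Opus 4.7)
The plan is to adapt the proof of \cref{T2.1}, replacing the two places where the comparability $f_1 \asymp f_2$ was invoked by arguments based on condition \hyperlink{F}{(F)} and its consequence \eqref{E2.22}. Since \cref{L2.3} and the strict-convexity step in part~(b) of \cref{T2.1} never use $f_1 \asymp f_2$, they carry over verbatim. Only the second conclusion of \cref{L2.1} and the two dominated-convergence steps in the proof of \cref{L2.2} need revision.

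For the bound on $|Du_k|^2/u_k$, start from \eqref{EP2.1B} together with \eqref{F1}, which give $|Du_k|^{2\gamma_k} \leq C(1 + |f_1|^2 + |f_2|^2)$. Squaring \eqref{E2.22} yields $|f_{3-k}|^2 \leq c(1 + |f_k|^{\gamma_k+1})$, and hence $|Du_k|^{2\gamma_k} \leq C(1 + |f_k|^{\gamma_k+1})$. Combined with the first conclusion of \cref{L2.1} (whose proof uses only \cref{A1.1,A1.2} and is unchanged), this produces the pointwise bound $|Du_k|^2/u_k \leq C(1 + |f_k|)$ outside a compact set. This dominates the integrand of the $\chi''$-term in \eqref{EL2.2E} by a $\mu_k$-integrable function, since $\int f_k\,\D\mu_k < \infty$ whenever $\bm\mu \in \eom_{\bm F}$, and the dominated convergence theorem gives the desired vanishing as $r \to \infty$.

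For the $G_{r,k}[\bm u]$ term, applying the same procedure to \eqref{EP2.1C} produces $|u_1 - u_2| \leq C(1 + |f_k|^{(\gamma_k+1)/2})$ outside a compact set. Since $(\gamma_k+1)/2 > 1$, a pointwise dominating function is not immediately available, so the idea is to exploit the factor $\mathbf{1}_{A_r^c}$: by coercivity of $u_k$, $\mu_k(A_r^c) \to 0$ as $r \to \infty$, so Cauchy--Schwarz together with finiteness of $\int |u_1-u_2|^2\,\D\mu_k$ yields the vanishing of $\int G_{r,k}\,\D\mu_k$. The required $L^2$-moment, equivalent to $\int |f_k|^{\gamma_k+1}\,\D\mu_k < \infty$, would be established by testing the invariance identity $\bm\mu(\bAg\bm g) = 0$ against a suitable polynomial Lyapunov function of $|x|$ and using the third inequality in \hyperlink{F}{(F)} together with the lower bound $\ell_k(x,\xi) \geq c|\xi|^{\gamma_k'} - C$ to close the resulting bootstrap estimate.

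With these two modifications the argument of \cref{L2.2} goes through and yields $\lamstr \leq \bar\lambda$; combined with the unchanged \cref{L2.3} giving $\bar\lambda \leq \lambda$ for any nonnegative solution, one obtains $\lambda = \lamstr = \bar\lambda = \bm\mu_{\bm u}(\bm F)$. The uniqueness statement in part~(b) then follows from the averaged-measure construction $\widehat{\bm\mu}$ and the strict convexity of $\ell_k$, exactly as in the proof of \cref{T2.1}. The main obstacle is the second modification: bounding $\int |u_1-u_2|^2\,\D\mu_k$ (equivalently, the higher-order moment $\int |f_k|^{\gamma_k+1}\,\D\mu_k$) using only $\int f_k\,\D\mu_k < \infty$ and the invariance of $\bm\mu$. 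Condition \hyperlink{F}{(F)} is tailored precisely so that the exponent $\beta_k(\gamma_k+1)/2$ stays below the polynomial threshold at which such bootstrapping via the extended generator becomes available; making this rigorous without direct access to Harnack or exponential-transformation tricks is the technically delicate point.
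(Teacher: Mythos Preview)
Your handling of the $\chi''_r$ term is correct and matches the paper: using \eqref{E2.22} to convert the cross-term $|f_{3-k}|^2$ into a power of $|f_k|$ yields $|Du_k|^2/u_k \leq C(1+|f_k|)$ outside a compact set, and this suffices for dominated convergence against $\mu_k$. Likewise, you are right that \cref{L2.3} and the strict-convexity argument of part~(b) require no change.

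The gap is in your treatment of the $G_{r,k}[\bm u]$ term. You propose to bound $|u_1-u_2|$ via \eqref{EP2.1C}, obtaining growth of order $|f_k|^{(\gamma_k+1)/2}$, and then to recover the missing higher moment $\int |f_k|^{\gamma_k+1}\,\D\mu_k$ by a Lyapunov bootstrapping against the invariance identity. You acknowledge this step is not carried out, and in fact it is not how the third inequality in \hyperlink{F}{(F)} is meant to be used. The invariance identity in $\eom$ is only tested against compactly supported functions, so extracting polynomial moments would itself require a truncation-and-limit procedure of exactly the same type you are trying to justify; the argument, as sketched, is close to circular.

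The paper's route is much simpler and avoids any higher-moment bootstrap. Instead of estimating $|u_1-u_2|$ through \eqref{EP2.1C}, use the trivial bound $|u_1-u_2|\le u_1+u_2$ (both are nonnegative) and control each $u_i$ directly. From $|\grad u_i|^2\le \kappa_1(1+|f_i|^{1+1/\gamma_i})$ and the polynomial form of $f_i$ in \hyperlink{F}{(F)} one gets
\[
|\grad u_i(x)|\le \kappa_4\bigl(1+|x|^{\beta_i(\gamma_i+1)/(2\gamma_i)}\bigr),
\]
and integrating along a ray yields $u_i(x)\le C\bigl(1+|x|^{1+\beta_i(\gamma_i+1)/(2\gamma_i)}\bigr)$. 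The third inequality in \hyperlink{F}{(F)} says precisely that this exponent is at most $\beta_1\wedge\beta_2$, hence
\[
\max\{u_1(x),u_2(x)\}\le \kappa_5\min\{1+|f_1(x)|,\,1+|f_2(x)|\}.
\]
Now $|G_{r,k}[\bm u]|\le C\,\Ind_{A_r^c}(1+|f_k|)$, which is $\mu_k$-integrable since $\bm\mu(\bm F)<\infty$, and dominated convergence applies directly. So the purpose of the third inequality in \hyperlink{F}{(F)} is not to set a threshold for moment bootstrapping, but simply to ensure that the antiderivative of the gradient bound stays below $\min(f_1,f_2)$.
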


\begin{proof}
We only need to modify \cref{L2.1,L2.2}. Note that \eqref{EL2.1B} holds.
Using \eqref{F1},\eqref{F2},\eqref{EP2.1B} and \eqref{E2.22} it follows that
\begin{equation}\label{T2.2A}
|\grad u_i(x)|^2\leq \kappa_1(1+|f_i(x)|^{1+\gamma^{-1}_i})
\end{equation}
for some constant $\kappa_1$. Therefore, for some compact set $\cK$ and
a constant $\kappa_3$, we obtain from \eqref{EL2.1B} that
\begin{equation}\label{T2.2B}
\frac{|\grad u_i|^2}{u_i(x)}\leq \kappa_3 |f_i(x)|\quad x\in \cK^c.
\end{equation}
Again, using \hyperlink{F}{(F)} and \eqref{T2.2A} we see that
$$|\grad u_i(x)|\leq \kappa_4 \left(1+|x|^{\frac{\beta_i(1+\gamma_i)}{2\gamma_i}}\right)\quad \text{for some}\; \kappa_4, \quad i=1,2.
$$
Using \hyperlink{F}{(F)} this also implies
\begin{equation}\label{T2.2C}
\max\{u_1(x), u_2(x)\}\leq \kappa_5 \min\{1+|f_1(x)|, 1+|f_2(x)|\}
\end{equation}
for some $\kappa_5$. Using \eqref{T2.2B} and \eqref{T2.2C} we can
complete the proof of \cref{L2.2}. Rest of the argument of \cref{T2.1}
follows without any change.
\end{proof}

\subsection{Existence} First we establish \cref{T1.1}. We see that
if $\inf_{\RN} f_i>-\infty$, then set of subsolution in \eqref{lam}
is nonempty. In particular, if we set 
$\lambda=\min_{i}\,\inf_{\RN} f_i$, then $\bm{u}=(1,1)$ is a subsolution
to \eqref{EP} with eigenvalue $\lambda$.

\begin{lemma}\label{L2.4}
Let \cref{A1.1} hold and also assume that $\bm{f}\in \Cc^1(\RN\times\{1,2\})$.
Suppose that $\bm{u}$ is a $\Cc^2$ subsolution to \eqref{EP} with some
eigenvalue $\lambda_1$. Then \eqref{EP} has a $\Cc^2$ solution for 
every $\lambda\leq\lambda_1$. 
\end{lemma}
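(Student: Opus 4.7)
The plan is to solve a sequence of Dirichlet problems on balls $B_R$, then pass to the limit using the gradient estimate of \cref{P2.1}.

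First, observe that if $\bm{u}$ is a $\Cc^2$ subsolution of \eqref{EP} with eigenvalue $\lambda_1$, then it is also a subsolution with eigenvalue $\lambda$ for every $\lambda\leq \lambda_1$, because the right-hand side only increases when $\lambda$ decreases. So I may fix $\lambda\leq\lambda_1$ and work with the same subsolution $\bm{u}$.

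Next, for each $R>0$ I would invoke the existence result from \cref{App-Exis} to produce $\bm{u}^R=(u_1^R,u_2^R)\in \Cc^2(B_R\times\{1,2\})\cap \Cc(\overline{B_R}\times\{1,2\})$ solving \eqref{EP} in $B_R$ with the Dirichlet data $u_i^R = u_i$ on $\partial B_R$. By the comparison principle \cref{TB.1}, $\bm{u}^R \geq \bm{u}$ throughout $B_R$, which in particular gives a uniform lower bound on $\bm{u}^R$ on any fixed compact set. Applying \cref{P2.1} with $\varepsilon=0$ on any pair of concentric balls $B_1\Subset B_2$ (for $R$ large enough that $B_2\Subset B_R$), I obtain a constant, independent of $R$, such that
\begin{equation*}
\sup_{B_1}\bigl(|\grad u_1^R|+|\grad u_2^R|\bigr) + |u_1^R(z)-u_2^R(z)|
\,\le\, C\bigl(B_1,B_2,\bm{f},\alpha\bigr),
\end{equation*}
where the right-hand side depends only on the indicated data (finite because $\bm{f}\in \Cc^1$ and hence bounded on $B_2$ together with $\grad\bm{f}$).

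Having these uniform local bounds on gradients and on $|u_1^R-u_2^R|$, I would normalize by setting $\bm{w}^R\df \bm{u}^R - u_1^R(0)$; the equation \eqref{EP} is invariant under addition of a common constant to both components, so $\bm{w}^R$ solves the same system in $B_R$. Since $w_1^R(0)=0$, $|w_2^R(0)|=|u_1^R(0)-u_2^R(0)|$ is bounded uniformly in $R$ by \eqref{EP2.1C}, and $|\grad\bm{w}^R|=|\grad\bm{u}^R|$ is locally bounded by \eqref{EP2.1B}, the family $\{\bm{w}^R\}$ is locally uniformly bounded in $\RN$ and equi-Lipschitz on compact sets.

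Finally, viewing each equation for $w_i^R$ as a linear elliptic equation with $\Lp^\infty_{\mathrm{loc}}$ data (since $H_i(\cdot,\grad w_i^R)$, $\alpha_i(w_1^R-w_2^R)$, and $f_i$ are all uniformly locally bounded), standard interior $\Sob^{2,p}$ and Schauder estimates provide a uniform $\Cc^{2,\alpha}_{\mathrm{loc}}$ bound for $\bm{w}^R$. A diagonal extraction then yields $\bm{w}^R\to\bm{w}$ in $\Cc^2_{\mathrm{loc}}(\RN\times\{1,2\})$ along a subsequence, and the limit $\bm{w}$ solves \eqref{EP} in all of $\RN$ with eigenvalue $\lambda$. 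The main obstacle is already handled by \cref{P2.1}: producing $R$-independent interior gradient and coupling-gap estimates for the Dirichlet solutions so that the normalized sequence is pre-compact; once these are in hand, existence in bounded domains (from the appendix) and routine elliptic bootstrap suffice.
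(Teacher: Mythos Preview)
Your proposal is correct and follows the same approach as the paper: solve \eqref{EP} on an exhausting sequence of balls via \cref{TB.3}, normalize so that one component vanishes at the origin, apply \cref{P2.1} for uniform local gradient and coupling-gap bounds, and extract a $\Sobl^{2,p}$-convergent subsequence. One minor point: \cref{TB.3} does not guarantee a solution with prescribed Dirichlet data $\bm{u}^R=\bm{u}$ on $\partial B_R$, but its conclusion already includes $\bm{u}^R\ge\bm{u}$ in $B_R$ (and in any case neither that lower bound nor the boundary values are needed once you normalize and invoke \cref{P2.1}).
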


\begin{proof}
Since $\bm{u}$ is also a subsolution for any $\lambda\leq \lambda_1$,
it is enough to show that there exists a solution $\bm{w}$ to \eqref{EP}
with eigenvalue $\lambda_1$. For a $n\in\NN$, fix $D=B_n(0)$.
Applying \cref{TB.3}, we can find a function $\bm{w}^n=(w^n_1, w^n_2)
\in \Cc^2(D\times\{1,2\})$ that satisfies
\begin{equation}\label{EL2.4A}
\begin{split}
-\Delta w^n_1(x) + H_1(x,\grad w^n_1(x)) + \alpha_1(x)(w^n_1(x)-w^n_2(x)) &= f_1(x)-\lambda_1 \quad \text{in}\; B_n(0),
\\
-\Delta w^n_2(x) + H_2(x,\grad w^n_2(x)) + \alpha_2(x)(w^n_2(x)-w^n_1(x)) &= f_2(x)-\lambda_1 \quad \text{in}\; B_n(0).
\end{split}
\end{equation}
We translate $\bm{w}^n$ to satisfy $w^n_1(0)=0$. Let $\cK$ be a compact 
subset of $\RN$. Then, by \cref{P2.1}, we get
$\sup_{n}\{|w^n_1(0)|, |w^n_2(0)|\}$ bounded and 
$$\sup_{\cK} \{|\grad w^n_1|, |\grad w^n_2|\}<C_\cK,$$
for all $n$ satisfying $B_n(0)\Supset\cK$. Thus, $\{\bm{w}^n\}$ is locally
bounded in $\Sobl^{2, p}$, uniformly in $n$. Applying a diagonalization
argument, we can find a subsequence of $\{\bm{w}^n\}$, converging to
some $\bm{w}\in \Sobl^{2, p}(\RN\times\{1,2\})$ for $p>N$. Passing limit
in \eqref{EL2.4A} gives
\begin{equation*}
\begin{split}
-\Delta w_1(x) + H_1(x,\grad w_1(x)) + \alpha_1(x)(w_1(x)-w_2(x)) &= f_1(x)-\lambda_1 \quad \text{in}\; \RN,
\\
-\Delta w_2(x) + H_2(x,\grad w_2(x)) + \alpha_2(x)(w_2(x)-w_1(x)) &= f_2(x)-\lambda_1 \quad \text{in}\; \RN.
\end{split}
\end{equation*}
We can now bootstrap the regularity of $\bm{w}$ to $\Cc^2$ using 
standard elliptic regularity theory (cf. \cite{GilTru}).
\end{proof}

Now we can complete the proof of \cref{T1.1}.
\begin{theorem}\label{T2.2}
Let \cref{A1.1} hold.
Suppose that $f_1, f_2\in \Cc^1(\RN)$ are bounded below.
Then $\lamstr$ is finite and \eqref{EP} has solution for the eigenvalue 
$\lamstr$. In particular, by \cref{L2.4}, \eqref{EP} has a solution for every $\lambda\leq\lamstr$.
\end{theorem}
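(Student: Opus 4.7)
The remark preceding \cref{L2.4} already shows $\lamstr>-\infty$, because $(1,1)$ is a subsolution of \eqref{EP} for $\lambda$ sufficiently small (using \eqref{H1} to bound $H_i(\cdot,0)$). It remains to prove (i)~$\lamstr<\infty$ and (ii)~\eqref{EP} admits a $\Cc^2$ solution at the eigenvalue $\lamstr$. Both assertions rest on invoking \cref{P2.1} with $\varepsilon=0$ on the $\Cc^2$ solutions furnished by \cref{L2.4}, after reading the right-hand side of \eqref{EP2.1A} as $f_i-\lambda$. The decisive observation is that in \eqref{EP2.1B}--\eqref{EP2.1C} the $\lambda$-dependence enters only through $(f_i-\lambda)_+^{2}$ and $|\grad(f_i-\lambda)|^{2\gamma_i/(2\gamma_i-1)}=|\grad f_i|^{2\gamma_i/(2\gamma_i-1)}$. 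On any fixed compact set the first term vanishes once $\lambda\ge\sup f_i$ there, and so these estimates are uniform in $\lambda$ either for all sufficiently large $\lambda$ or for $\lambda$ ranging over any bounded set.

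\textbf{Finiteness of $\lamstr$.} Argue by contradiction: if $\lamstr=+\infty$, then by the definition of $\lamstr$ and \cref{L2.4} there exist $\bm{u}^n\in\Cc^2(\RN\times\{1,2\})$ solving \eqref{EP} with eigenvalues $\lambda_n\to\infty$. Normalize $u_1^n(0)=0$. Applying \eqref{EP2.1B}--\eqref{EP2.1C} (with $\varepsilon=0$, the centers varying over $\overline{B_1(0)}$, and using the uniformity observed above) yields $K>0$, independent of $n$, with
\begin{equation*}
\sup_{B_1(0)}\bigl(|\grad u_1^n|+|\grad u_2^n|\bigr)\,\le\,K
\quad\text{and}\quad
\sup_{B_1(0)}|u_1^n-u_2^n|\,\le\,K,
\end{equation*}
so $\bm{u}^n$ is uniformly bounded in $\Cc^1(\overline{B_1(0)})$. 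Rewriting the first equation of \eqref{EP} as $\Delta u_1^n=H_1(x,\grad u_1^n)+\alpha_1(u_1^n-u_2^n)+\lambda_n-f_1$ and using \eqref{H1}, \cref{A1.1}, and local boundedness of $f_1$, we obtain $\Delta u_1^n\ge\lambda_n-C_*$ on $B_1(0)$ for some $C_*$ independent of $n$. The divergence theorem then gives
\begin{equation*}
|B_1(0)|(\lambda_n-C_*)\,\le\,\int_{B_1(0)}\Delta u_1^n\,\D{x}\,=\,\int_{\partial B_1(0)}\grad u_1^n\cdot\nu\,\D\sigma\,\le\,K\,|\partial B_1(0)|,
\end{equation*}
forcing $\{\lambda_n\}$ to remain bounded, a contradiction. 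Hence $\lamstr<\infty$.

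\textbf{Existence at $\lamstr$.} Pick $\lambda_n\uparrow\lamstr$. By \cref{L2.4} each $\lambda_n$ admits a $\Cc^2$ solution $\bm{u}^n$ of \eqref{EP}; normalize $u_1^n(0)=0$. Since $\{\lambda_n\}$ is now bounded, \cref{P2.1} provides bounds on $|\grad u_i^n|$ and on $|u_1^n(z)-u_2^n(z)|$ that are uniform in $n$ and locally uniform in $z\in\RN$. Together with the normalization these yield a uniform local $\Cc^1$ bound on $\bm{u}^n$, which through the equation upgrades to a uniform local $\Sobl^{2,p}$ bound for every $p\in(1,\infty)$. A Rellich--Kondrachov compactness argument together with a diagonal extraction then produces $\bm{u}^*\in\Sobl^{2,p}(\RN\times\{1,2\})$ and a subsequence with $\bm{u}^n\to\bm{u}^*$ in $\Cc^1_{\mathrm{loc}}$. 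Passing to the limit in \eqref{EP} shows $(\bm{u}^*,\lamstr)$ is a strong solution; standard interior elliptic regularity then places $\bm{u}^*$ in $\Cc^2(\RN\times\{1,2\})$, and the last assertion follows immediately by applying \cref{L2.4} to the $\Cc^2$ subsolution $\bm{u}^*$.

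\textbf{Main obstacle.} The pivotal technical point is the uniform-in-$\lambda$ applicability of \cref{P2.1} when its right-hand side is viewed as $f_i-\lambda$, which depends on the elementary but essential identities $(f_i-\lambda)_+\equiv 0$ for $\lambda$ large on compacts and $\grad(f_i-\lambda)=\grad f_i$. Without this uniformity, neither the divergence-theorem contradiction in the finiteness step nor the locally uniform compactness used in the limit $\lambda_n\uparrow\lamstr$ would survive.
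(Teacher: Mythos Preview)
Your proof is correct. The existence step at $\lamstr$ is essentially identical to the paper's, but your finiteness argument takes a genuinely different route. The paper rescales: with $\psi^k_i\df\lambda_k^{-1}\phi^k_i$, the uniform local $\Cc^1$ bounds from \cref{P2.1} force $\grad\psi_i=0$ in the limit, so $\bm\psi$ is constant; passing to the limit in the rescaled system then yields the absurdity $\alpha_1(\psi_1-\psi_2)=-1$ and $\alpha_2(\psi_2-\psi_1)=-1$ simultaneously. You instead stay at the original scale and integrate $\Delta u_1^n\ge\lambda_n-C_*$ over $B_1(0)$, bounding the flux by the uniform gradient estimate on $\partial B_1(0)$. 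Your argument is more elementary---it avoids any limiting procedure in the finiteness step and extracts the contradiction from a single inequality---while the paper's rescaling is the more standard device in the viscous HJ literature and perhaps adapts more readily when no clean integral identity is available. Both rest on exactly the same uniform-in-$\lambda$ input from \cref{P2.1} that you correctly identify as the crux.
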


\begin{proof}
From the discussion preceding \cref{L2.4} we see that 
$$\lamstr\geq\min_{i=1,2}\,\inf_{\RN}f_i\,.$$
We first show that $\lamstr<\infty$. Suppose, on the contrary, that
$\lamstr=\infty$.
Then, in view of \cref{L2.4}, there exists a sequence of solutions 
$\{(\bm\phi^k, \lambda_k)\}=\{(\phi_1^k,\phi_2^k, \lambda_k)\}$ of \eqref{EP} satisfying $\lambda_k\to\infty$, as $k\to \infty$. 
We can translate $\bm{\phi}^k$ to satisfy $\phi^k_1(0)=0$.
Since
\begin{equation}\label{ET2.2A}
\begin{split}
-\Delta \phi^k_1(x) + H_1(x,\grad \phi^k_1(x)) + 
\alpha_1(x)(\phi^k_1(x)-\phi^k_2(x)) &= f_1(x)-\lambda_k \quad \text{in}\; \RN,
\\
-\Delta \phi^k_2(x) + H_2(x,\grad \phi^k_2(x)) + 
\alpha_2(x)(\phi^k_2(x)-\phi^k_1(x)) &= f_2(x)-\lambda_k \quad \text{in}\; \RN,
\end{split}
\end{equation}
and $(f_i-\lambda_k)_+\leq (f_i)_+$ for large $k$, it follows from 
\cref{P2.1} that 
\begin{equation}\label{ET2.2B}
\sup_k\, \sup_{\cK}\{|H_1(x, \grad\phi^k_1)|, |H_2(x, \grad\phi^k_2)|\}
<\infty, \quad \sup_k\,\sup_{\cK}\{|\phi^k_1|, |\phi^k_2|\}<\infty,
\end{equation} 
for every compact set $\cK$ in $\RN$.
Setting
$$\psi_i^k\df\lambda_k^{-1}\phi^k_i\quad \text{for}\;i=1,2,$$
we see from \eqref{ET2.2A} that
\begin{equation*}
\begin{split}
-\Delta \psi^k_1(x) + \lambda_k^{-1}H_1(x,\grad {\phi}^k_1(x)) + \alpha_1(x)(\psi^k_1(x)-\psi^k_2(x)) &=\lambda_k^{-1}f_1(x)-1 \quad \text{in}\; \RN,
\\
-\Delta \psi^k_2(x) + \lambda_k^{-1}H_2(x,\grad {\phi}^k_2(x)) + \alpha_2(x)(\psi_1^k(x)-\psi^k_2(x)) &=\lambda_k^{-1}f_2(x)-1 \quad \text{in}\; \RN.
\end{split}
\end{equation*}
Using \eqref{ET2.2B} we see that $\{\bm\psi^k\}$ is locally bounded in
$\Sobl^{2, p}(\RN)$ for $p>N$. Therefore, we can find a convergence
subsequence, converging to some $\bm\psi$. \eqref{ET2.2B} also shows that
$|\grad\psi_i|=0$ implying $\bm\psi$ to be a constant. Then passing limit in
the above display we get a contradiction. Hence $\lamstr$ must be finite.

Now choose $\lambda_n<\lamstr$ such that $\lambda_n\to \lamstr$ as $n\to\infty$. Then, using \cref{L2.4}, we get a solution 
$(u_1^n,u_2^n, \lambda_n)$ to \eqref{EP}. 
Applying an argument, similar to above, we can extract a convergent
subsequence,
 converging locally to $\bm{u}=(u_1,u_2)$ and $\bm{u}$ solves
  \eqref{EP} with the eigenvalue $\lamstr$. This completes the proof.
\end{proof}

The rest of this section is devoted to the proof of \cref{T1.2}, that is,
we construct a nonnegative solution to \eqref{EP} corresponding to the
eigenvalue $\lamstr$. The broad idea of the proof is the following: We solve
the ergodic control problem \eqref{EP} on an increasing sequence of balls
$B_n$ and find solution pairs $(\bm{u}^n, \lambda_n)$ in the balls. We then 
show that $\lambda_n$ decreases to $\lambda^*$ and $\bm{u}^n\to \bm{u}$.
Using the coercivity of $\bm{f}$, we can confine the minimizer of
$\bm{u}^n$ inside a fixed compact set, independent of $n$. This also
makes $\bm{u}$ bounded from below. For this idea to work it is important that $\bm{u}^n$ attends its minimum inside $B_n$. This can be achieved if we set $\bm{u}^n=+\infty$ on $\partial B_n$. For $\gamma_i\leq 2$, this can be done using the arguments of Lasry-Lions in \cite{LL89}.
But for $\gamma_i>2$, we need to modify $\bm{f}$ to {\it attend} the boundary data.

Let $\bm{f}$ be a $\Cc^1$ function. Let $B=B_r(0)$ be the ball of radius
$r\geq 1$ around $0$.
Let $\varrho:(0, \infty)\to(0, \infty)$ be a smooth, nonnegative function satisfying
\[
\varrho(x)=\left\{
\begin{array}{lll}
	x^{-1} & \text{for}\; x\in (0, \frac{1}{2}),
	\\[2mm]
	0 & \text{for}\; x\geq 1.
\end{array}
\right.
\]
Define 
$$f_{i, \alpha}(x)= f_i(x) + [\varrho(r^2-|x|^2)]^{\alpha}
\quad x\in B,\, i=1,2,$$
for some $\alpha$ to be fixed later. Let $\beta>\max\{2, \gamma_1, \gamma_2\}$ be such that
$(\beta+1)(\gamma_i\wedge 2)>\beta+2$. Choose $\alpha>0$ to satisfy 
$\beta<\alpha<(\beta+1)(\gamma_i\wedge 2)$ for $i=1,2$. With no loss of generality,
we also assume that $1<\gamma_2\leq\gamma_1$. Our next result concerns
discounted problem in $B$.

\begin{lemma}\label{L2.5}
Let \cref{A1.1} hold. Then, for any $\varepsilon\in (0, 1)$, the system  
\begin{equation}\label{EL2.5A}
\begin{split}
-\Delta w^\varepsilon_1 + H_1(x,\grad w^\varepsilon_1) + \alpha_1(x)(w^\varepsilon_1-w^\varepsilon_2) + \varepsilon w^\varepsilon_1 &=f_{1,\alpha}\quad \text{in}\; B,
\\
-\Delta w^\varepsilon_2 + H_2(x,\grad w^\varepsilon_2) + \alpha_2(x)(w^\varepsilon_2-w^\varepsilon_1) + \varepsilon w^\varepsilon_2 &=f_{2,\alpha}\quad \text{in}\; B,
\end{split}
\end{equation}
admits a solution $(w^\varepsilon_1,w^\varepsilon_2)$ in $\Cc^2(B\times\{1,2\})$ with $w^\varepsilon_i\to \infty$ as $x\to \partial B$. Moreover, the set
$\{\varepsilon w_i^\varepsilon(0)\; :\; \varepsilon\in(0,1)\}$ is bounded
for $i=1,2$.
\end{lemma}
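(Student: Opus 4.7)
The plan is to construct $\bm{w}^\varepsilon$ as a monotone limit of solutions $\bm{w}^{\varepsilon,N}$ to the same system on $B$ with constant Dirichlet data $w_i = N$ on $\partial B$, using a tailored blow-up supersolution to bound the family uniformly from above. First I would invoke \cref{TB.3} (applied, if necessary, to the reduced problem for $w_i^{\varepsilon,N}-N$) to obtain a classical solution $\bm{w}^{\varepsilon,N}\in \Cc^2(B\times\{1,2\})\cap \Cc(\bar B\times\{1,2\})$. Comparison with the constant $-K/\varepsilon$, where $K\df\sup_{B,\, i}(H_i(\cdot,0) - f_i)_+<\infty$, yields $w_i^{\varepsilon,N} \geq -K/\varepsilon$, so $(\varepsilon w_i^{\varepsilon,N})_- \leq K$ uniformly in $N,\varepsilon$; the comparison principle \cref{TB.1} also gives that $w_i^{\varepsilon,N}$ is nondecreasing in $N$.

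The central step is the construction of a supersolution of the form $\bar W_1=\bar W_2=\bar W$, where $\bar W(x) = A[\varrho(r^2-|x|^2)]^\beta + B_0$ for constants $A, B_0>0$. Writing $d(x)\df r^2-|x|^2$, near $\partial B$ one has $|\grad \bar W|\asymp Ad(x)^{-\beta-1}$ and $-\Delta\bar W \asymp -A d(x)^{-\beta-2}$, while \eqref{H1} gives $H_i(x,\grad\bar W) \gtrsim A^{\gamma_i} d(x)^{-(\beta+1)\gamma_i}$. The hypothesis $(\beta+1)(\gamma_i\wedge 2) > \beta+2$ guarantees this $H_i$ term beats $-\Delta\bar W$, while $\alpha < (\beta+1)(\gamma_i\wedge 2)\leq (\beta+1)\gamma_i$ guarantees it beats $f_{i,\alpha}\sim d(x)^{-\alpha}$. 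Choosing $A$ suitably small and $B_0$ large enough to absorb the interior makes $\bar W$ a classical supersolution of \eqref{EL2.5A} on $B$. Comparing on the exhaustion $B_{r-\delta}$ as $\delta\downarrow 0$, together with $\bar W\to\infty$ at $\partial B$, yields $w_i^{\varepsilon,N}\leq \bar W$ in $B$ uniformly in $N$ and $\varepsilon$.

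Combining this upper bound with the uniform lower bound and the gradient estimate \cref{P2.1} produces uniform-in-$N$ local Lipschitz bounds on $\bm{w}^{\varepsilon,N}$; standard elliptic regularity then upgrades the pointwise monotone limit $w_i^\varepsilon\df\lim_N w_i^{\varepsilon,N}$ to a $\Cc^2(B\times\{1,2\})$ solution of \eqref{EL2.5A}, with $w_i^\varepsilon\leq\bar W$. The boundary blow-up follows quickly: for any $M>0$, take $N=2M$; since $\bm{w}^{\varepsilon,N}\in\Cc(\bar B\times\{1,2\})$ with boundary value $N$, there is a neighborhood $U_N$ of $\partial B$ in which $w_i^{\varepsilon,N}\geq M$, and hence $w_i^\varepsilon\geq w_i^{\varepsilon,N}\geq M$ on $U_N$, so $w_i^\varepsilon\to\infty$ as $x\to\partial B$.

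Finally, for the uniform boundedness of $\{\varepsilon w_i^\varepsilon(0):\varepsilon\in(0,1)\}$, the lower bound $\varepsilon w_i^\varepsilon(0)\geq -K$ is already in hand. For the upper bound, since $(\varepsilon w_i^\varepsilon)_-$ is uniformly bounded, applying \cref{P2.1} on a pair $B_{\rho_1}\Subset B_{\rho_2}\Subset B$ centered at $0$ produces uniform-in-$\varepsilon$ control on $|\grad w_i^\varepsilon|$ on $B_{\rho_1}$ and on $|w_1^\varepsilon(0) - w_2^\varepsilon(0)|$. Integrating \eqref{EL2.5A} over $B_{\rho_1}$, controlling $\int\Delta w_i^\varepsilon$ by the boundary flux via the divergence theorem, and using the Lipschitz bound to relate $w_i^\varepsilon(0)$ to its mean on $B_{\rho_1}$ yields $\varepsilon w_i^\varepsilon(0)\leq K'$ independent of $\varepsilon$. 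The main obstacle throughout is the supersolution construction: the technical hypotheses $(\beta+1)(\gamma_i\wedge 2)>\beta+2$ and $\alpha<(\beta+1)(\gamma_i\wedge 2)$ are precisely what make the exponent balance work, and verifying the relevant inequalities is the critical calculation.
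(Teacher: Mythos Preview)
Your scheme cannot be initialized: the Dirichlet problem on $B$ with finite boundary value $N$ has no solution in $\Cc^2(B)\cap\Cc(\bar B)$. Indeed, since $\alpha>\beta>\max\{\gamma_1,\gamma_2\}$, the source $f_{i,\alpha}\sim (r^2-|x|^2)^{-\alpha}$ is singular at $\partial B$; for a bounded $w$ the balance $H_i(x,\grad w)\sim d^{-\alpha}$ would force $|\grad w|\sim d^{-\alpha/\gamma_i}$ with $\alpha/\gamma_i>1$, which is not integrable along rays and so contradicts $w\in\Cc(\bar B)$. Moreover, \cref{TB.3} as stated requires $\bm f\in\Cc^1(\bar D)$ and does not return a solution with prescribed boundary data, so your invocation of it (even after the shift by $N$) is not justified. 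Once the family $\{\bm w^{\varepsilon,N}\}_N$ is unavailable, the monotone-limit construction, the comparison with $\bar W$, and the boundary blow-up argument all collapse.

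The paper sidesteps exactly this obstruction by a $\delta$-regularization: it takes $\xi^\delta(x)=-\log(r^2-\delta|x|^2)$ and $\zeta^\delta(x)=(r^2-\delta|x|^2)^{-\beta}$, which for $\delta<1$ are smooth on all of $\bar B$, and checks that $\kappa_1\xi^\delta-M/\varepsilon$ and $\kappa_2\zeta^\delta+M/\varepsilon$ are sub- and supersolutions of the $\psi_n$-truncated system (cf.\ \cref{LB.1}). The machinery of \cref{TB.3} then produces $\bm w^\delta$ trapped between these barriers; passing $\delta\to1$ via \cref{P2.1} yields $\bm w^\varepsilon$ together with the two-sided pointwise bound \eqref{EL2.5B}, from which the boundedness of $\{\varepsilon w_i^\varepsilon(0)\}$ is immediate without your integration trick. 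Note also that because $\psi_n(H_i)\le H_i$, a supersolution for the full equation need not survive truncation; this is precisely why the hypothesis reads $\alpha<(\beta+1)(\gamma_i\wedge2)$ rather than the weaker $\alpha<(\beta+1)\gamma_i$ your exponent count would give.
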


\begin{proof}
To find a solution to 
\eqref{EL2.5A}, first we find appropriate sub and super-solutions to \eqref{EL2.5A}.
Define $\xi^\delta(x)= -\log (r^2-\delta|x|^2)$ and let 
$(\xi^\delta_1, \xi^\delta_2)=(\kappa_1 \xi^\delta, \kappa_1 \xi^\delta)$.
It can be easily checked that, for some $\delta_0>0$ and $\delta\in (\delta_0, 1)$,
\begin{align*}
-\Delta \xi^\delta_1 + C_1(|\grad \xi^\delta_1|^{\gamma_1}+1) + \alpha_1(x)(\xi^\delta_1-\xi^\delta_2) + \varepsilon \xi^\delta_1 &\leq f_{1,\alpha}\quad \text{for}\; r-\delta_1\leq |x|<r,
\\
-\Delta \xi^\delta_2 + C_1(|\grad \xi^\delta_2|^{\gamma_2}+1) + \alpha_2(x)(\xi^\delta_2-\xi^\delta_1)
+ \varepsilon \xi^\delta_2 &\leq f_{2,\alpha}\quad \text{for}\; r-\delta_1\leq |x|<r,
\end{align*}
for some appropriate constant $\kappa_1$, dependent on $\gamma_1, \gamma_2$.
$\kappa_1, \delta_1$, and $\delta$ can be chosen independent of $\varepsilon$.
Now choose $M$ suitably large, independent of $\varepsilon, \delta$,
so that $(\kappa_1\xi^\delta_1-\frac{M}{\varepsilon}, \kappa_1\xi^\delta_2-\frac{M}{\varepsilon})$ forms a subsolution to \eqref{EL2.5A}.

Next we construct a super-solution. To this end, we consider the approximating function $\psi_n$ from \cref{LB.1}. More precisely,
we consider a sequence of functions $\bm{\psi}_n=(\psi^1_n, \psi^2_n)$
where $\psi^i_n(x)=x$ if $\gamma_i\leq 2$, otherwise $\psi^i_n=\psi_n$ 
from \cref{LB.1}.

We define $(\zeta^\delta_1, \zeta^\delta_2)=(\kappa_2\zeta, \kappa_2\zeta)$
where
$$\zeta=(r^2-\delta|x|^2)^{-\beta}\quad \text{for}\; i=1,2.$$
Using the condition $\beta<\alpha<(\beta+1)(\gamma_i\wedge 2)$,
and choosing $M$ large, independent of $n, \varepsilon, \delta$,
we see that $(\kappa_2\zeta^\delta_1+\frac{M}{\varepsilon}, \kappa_2\zeta^\delta_2+\frac{M}{\varepsilon})$ forms a supersolution
to the equation
\begin{equation*}
\begin{split}
-\Delta w^\varepsilon_1 + \psi^1_n(H_1(x,\grad w^\varepsilon_1)) + \alpha_1(x)(w^\varepsilon_1-w^\varepsilon_2) + \varepsilon w^\varepsilon_1 &=f_{1,\alpha}\quad \text{in}\; B,
\\
-\Delta w^\varepsilon_2 + \psi^2_n(H_2(x,\grad w^\varepsilon_2)) + \alpha_2(x)(w^\varepsilon_2-w^\varepsilon_1) + \varepsilon w^\varepsilon_2 &=f_{2,\alpha}\quad \text{in}\; B,
\end{split}
\end{equation*}
for all $n$. From the argument of \cref{TB.3}, we find a solution
$\bm{w}^\delta=(w^\delta_1, w^\delta_2)$ of 
\begin{equation*}
\begin{split}
-\Delta w^\delta_1 + H_1(x, \grad w^\delta_1) + \alpha_1(x)(w^\delta_1-w^\delta_2) + \varepsilon w^\delta_1 &=f_{1,\alpha}\quad \text{in}\; B,
\\
-\Delta w^\delta_2 + H_2(x, \grad w^\delta_2) + \alpha_2(x)(w^\delta_2-w^\delta_1)
+ \varepsilon w^\delta_2 &=f_{2,\alpha}\quad \text{in}\; B,
\end{split}
\end{equation*}
and  
$$\kappa_1\xi^\delta_i-\frac{M}{\varepsilon}\leq w^\delta_{i,n}\leq 
\kappa_2\zeta^\delta_i+\frac{M}{\varepsilon}\quad \text{in}\; B, \, i=1,2.$$
Using the estimates in \cref{P2.1},
we can now let $\delta\to 1$ and find a solution 
to 
\begin{equation*}
\begin{split}
-\Delta w^\varepsilon_1 + H_1(x, \grad w^\varepsilon_1) + \alpha_1(x)(w^\varepsilon_1-w^\varepsilon_2) + \varepsilon w^\varepsilon_1 &=f_{1,\alpha}\quad \text{in}\; B,
\\
-\Delta w^\varepsilon_2 + H_1(x, \grad w^\varepsilon_1) + \alpha_2(x)(w^\varepsilon_2-w^\varepsilon_1)
+ \varepsilon w^\varepsilon_2 &=f_{2,\alpha}\quad \text{in}\; B,
\end{split}
\end{equation*}
satisfying
\begin{equation}\label{EL2.5B}
-\kappa_1\log(r^2-|x|^2)-\frac{M}{\varepsilon}\leq w^\varepsilon_{i}\leq 
\kappa_2(r^2-|x|^2)^{-\beta}+\frac{M}{\varepsilon} \quad \text{in}\; B,
\, i=1,2.
\end{equation}
From \eqref{EL2.5B} we also obtain
$$\sup_{\varepsilon\in (0, 1)}\sup_{B_{1/2}(0)}|\varepsilon w^\varepsilon_i|<\infty.$$
This completes the proof.
\end{proof}

Now we can provide proof of \cref{T1.2}.
\begin{theorem}\label{T2.3}
Suppose that \cref{A1.1} holds and $f_i, i=1,2,$ are coercive. Then there exists 
a nonnegative solution to \eqref{EP} corresponding to the eigenvalue $\lamstr$.
\end{theorem}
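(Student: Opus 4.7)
The broad plan, following the sketch outlined after the statement, is to construct $(\bm u^n,\lambda_n)$ solving \eqref{EP} on an increasing family of balls $B_n=B_n(0)$ with the Lasry--Lions penalized source $f_{i,\alpha}$, exploit the coercivity of $\bm f$ to pin the sequence of minima of $\bm u^n$ to a fixed compact set, and extract a locally convergent subsequence with limit $(\bm u,\lamstr)$.

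\textbf{Construction on $B_n$.} Fix $n$. By \cref{L2.5}, for every $\varepsilon\in(0,1)$ there is a solution $\bm w^{n,\varepsilon}$ of the discounted system \eqref{EL2.5A} on $B_n$ with $w^{n,\varepsilon}_i\to+\infty$ at $\partial B_n$ and with $\{\varepsilon w^{n,\varepsilon}_1(0)\}_\varepsilon$ bounded. Set $\bar w^{n,\varepsilon}_i\df w^{n,\varepsilon}_i-w^{n,\varepsilon}_1(0)$ and $\lambda^{n,\varepsilon}\df\varepsilon w^{n,\varepsilon}_1(0)$; then $\bm{\bar w}^{n,\varepsilon}$ satisfies
\begin{equation*}
-\Delta\bar w_i+H_i(x,\grad\bar w_i)+\alpha_i(\bar w_i-\bar w_j)+\varepsilon\bar w_i \,=\, f_{i,\alpha}-\lambda^{n,\varepsilon}\,,\quad i=1,2,
\end{equation*}
with $\bar w^{n,\varepsilon}_1(0)=0$. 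The subsolution estimate in \cref{L2.5} makes $\varepsilon\bar w^{n,\varepsilon}_i$ bounded below on each compact $K\Subset B_n$ uniformly in $\varepsilon$, so $(\varepsilon\bar w^{n,\varepsilon}_i)_-$ is controlled; \cref{P2.1} then yields uniform-in-$\varepsilon$ local $W^{1,\infty}$ estimates for $\bm{\bar w}^{n,\varepsilon}$ together with a uniform bound on $|\bar w^{n,\varepsilon}_1-\bar w^{n,\varepsilon}_2|$. Integrating the gradient from $0$, $\bm{\bar w}^{n,\varepsilon}$ is locally bounded, and passing $\varepsilon\to 0$ along a subsequence via elliptic bootstrap produces $(\bm u^n,\lambda_n)\in\Cc^2(B_n\times\{1,2\})\times\RR$ solving \eqref{EP} with source $f_{i,\alpha}$ and $u^n_1(0)=0$. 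A standard Lasry--Lions sub-barrier of the form $-A\log(r^2-|x|^2)$ with $A$ large, which is admissible thanks to the choice $\beta<\alpha<(\beta+1)(\gamma_i\wedge 2)$, forces $u^n_i\to+\infty$ at $\partial B_n$.

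\textbf{Localizing the minimum and passing to the limit.} Since $u^n_i$ diverges at the boundary, each component attains its minimum at an interior point. Let $x^*_n\in B_n$ be a point where $\min_i u^n_i$ is attained and let $i_*$ be a minimizing index. Evaluating the equation at $x^*_n$ (with $\grad u^n_{i_*}=0$, $\Delta u^n_{i_*}\ge 0$, and $u^n_{i_*}\le u^n_j$) gives
\begin{equation*}
f_{i_*,\alpha}(x^*_n)-\lambda_n \,\le\, H_{i_*}(x^*_n,0) \,\le\, C_1\,.
\end{equation*}
Monotonicity $\lambda_{n+1}\le\lambda_n$ is verified as follows: if $\lambda_{n+1}>\lambda_n$, the difference $\bm u^n-\bm u^{n+1}$ on $B_n$ blows up at $\partial B_n$ (since $u^n\to\infty$ there while $u^{n+1}$ is bounded on $\partial B_n\subset B_{n+1}$), hence attains an interior minimum, where the gradients of $u^n_{i_*}$ and $u^{n+1}_{i_*}$ agree and the Hamiltonian terms cancel on subtracting the two equations, leaving $0\ge(f_{n,\alpha}-f_{n+1,\alpha})(x^*)+(\lambda_{n+1}-\lambda_n)>0$, a contradiction. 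Hence $\lambda_n\downarrow\lambda_\infty\le\lambda_1<\infty$, and the displayed inequality together with coercivity of $f_i$ confines $x^*_n$ to a fixed compact set $K_0$. Shift $\bm u^n$ by a constant so that $\min_i\min_{B_n}u^n_i=0$; then \cref{P2.1} again furnishes $n$-independent local estimates on every compact $K\Subset\RN$ (once $n$ is large enough that $f_{i,\alpha}=f_i$ near $K$), and a diagonal extraction yields a nonnegative $\bm u\in\Cc^2(\RN\times\{1,2\})$ solving \eqref{EP} with eigenvalue $\lambda_\infty$.

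\textbf{Identifying $\lambda_\infty=\lamstr$ and the main obstacle.} Since $\bm u$ is in particular a subsolution of \eqref{EP}, $\lambda_\infty\le\lamstr$. For the reverse, I show $\lambda_n\ge\lamstr$ for every $n$: were it false, choose $\lambda'\in(\lambda_n,\lamstr)$ and a $\Cc^2$ subsolution $\bm\phi$ of \eqref{EP} with eigenvalue $\lambda'$, which exists by the very definition of $\lamstr$. The difference $W_i\df u^n_i-\phi_i$ blows up at $\partial B_n$, so $\min_i W_i$ is attained at an interior $x^*\in B_n$ with some minimizing index $i_*$. At $x^*$ the gradients of $u^n_{i_*}$ and $\phi_{i_*}$ coincide, so the $H_{i_*}$ terms cancel on subtracting the two inequalities, and one is left with
\begin{equation*}
0 \,\ge\, -\Delta W_{i_*}(x^*)+\alpha_{i_*}(W_{i_*}-W_j)(x^*) \,\ge\, (f_{i_*,\alpha}-f_{i_*})(x^*)+(\lambda'-\lambda_n) \,>\, 0\,,
\end{equation*}
a contradiction. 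Hence $\lambda_\infty\ge\lamstr$, completing the identification. The main technical obstacle is the vanishing-discount step: one must uniformly control $(\varepsilon\bar w^{n,\varepsilon}_i)_-$ on each compact $K\Subset B_n$ so that \cref{P2.1} can be invoked, which in turn requires that the unbounded shifts $M/\varepsilon$ appearing in the sub- and super-solutions of \cref{L2.5} cancel after subtracting $w^{n,\varepsilon}_1(0)$; this cancellation is made possible by the boundedness of $\{\varepsilon w^{n,\varepsilon}_1(0)\}$.
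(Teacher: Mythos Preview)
Your proposal follows the paper's approach closely and is largely correct: the vanishing-discount construction on each $B_n$, the pointwise minimum-principle arguments for $\lambda_{n+1}\le\lambda_n$ and $\lambda_n\ge\lamstr$ (which are valid alternatives to the paper's use of the strong maximum principle after linearization), the localization of the minimizers via coercivity of $\bm f$, and the diagonal extraction all go through as you describe.

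There is, however, one genuine gap. You assert that ``a standard Lasry--Lions sub-barrier of the form $-A\log(r^2-|x|^2)$ \ldots\ forces $u^n_i\to+\infty$ at $\partial B_n$,'' applying the barrier directly to the limit $u^n$. But the comparison principle with such a barrier requires knowing that $u^n$ dominates the barrier at $\partial B_n$---precisely the boundary behavior you are trying to establish. The bounds \eqref{EL2.5B} give $\bar w^{n,\varepsilon}_i\ge -\kappa_1\log(r^2-|x|^2)-2M/\varepsilon-\kappa_2 r^{-2\beta}$, which blows up at $\partial B_n$ for each fixed $\varepsilon$ but does not survive the limit $\varepsilon\to 0$. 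The paper handles this by comparing $\bar w^{n,\varepsilon}$ (which \emph{already} explodes at $\partial B_n$, from \cref{L2.5}) with a new barrier $\kappa_3\xi^\delta-M$ whose constants are independent of $\varepsilon$: the key observation is that $f^n_{i,\alpha}-\varepsilon w^{n,\varepsilon}_i\ge \tfrac12 f^n_{i,\alpha}$ near $\partial B_n$ (since $\alpha>\beta$ and $\varepsilon w^{n,\varepsilon}_i\lesssim (r^2-|x|^2)^{-\beta}$), so the effective source for $\bar w^{n,\varepsilon}$ is still singular enough to support the log-barrier. This $\varepsilon$-uniform lower bound then passes to the limit, yielding the blow-up of $u^n$. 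Your proof needs this extra step inserted \emph{before} taking $\varepsilon\to 0$.
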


\begin{proof}
First we find a pair $(\bm{u}^n, \lambda_n)$ solving
\begin{equation}\label{ET2.3A}
\begin{split}
-\Delta u^n_1 + H_1(x,\grad u^n_1) + \alpha_1(x)(u^n_1-u^n_2) 
 &=f^n_{1,\alpha}-\lambda_n\quad \text{in}\; B_n(0),
\\
-\Delta u^n_2 + H_2(x,\grad u^n_2) + \alpha_2(x)(u^n_2-u^n_1) &=f^n_{2,\alpha}-\lambda_n\quad \text{in}\; B_n(0),
\end{split}
\end{equation}
with $\bm{u}^n\to \infty$, as $x\to\partial B_n(0)$, where
$$f^n_{i, \alpha}= f_i + [\varrho(n^2-|x|^2)]^{\alpha},$$
and $\alpha$ is same as in \cref{L2.5}.
 Fix $n\in\NN$ and
denote by $B=B_n(0)$. Consider the solution $\bm{w}^\varepsilon$ from
\cref{L2.5}.
We set $v^\varepsilon_1=w^\varepsilon_1(x)-w^\varepsilon_1(0)$ and
$v^\varepsilon_2(x)=w^\varepsilon_2(x)-w^\varepsilon_1(0)$. From \eqref{EL2.5A}
we then find
\begin{equation}\label{ET2.3B}
\begin{split}
-\Delta v^\varepsilon_1 + H_1(x, \grad v^\varepsilon_1) + \alpha_1(x)(v^\varepsilon_1-v^\varepsilon_2) + \varepsilon w^\varepsilon_1 &=f^n_{1,\alpha}\quad \text{in}\; B,
\\
-\Delta v^\varepsilon_2 + H_2(x, \grad v^\varepsilon_2) + \alpha_2(x)(v^\varepsilon_2-v^\varepsilon_1)
+ \varepsilon w^\varepsilon_2 &=f^n_{2,\alpha}\quad \text{in}\; B.
\end{split}
\end{equation}
From our choice of $\alpha$ and \eqref{EL2.5B} we see that $f_{i, \alpha}-\varepsilon w^\varepsilon_i\geq 
\frac{1}{2}f_{i, \alpha}$ near the boundary, and since 
$\max_{B_{1/2}}\{|v^\varepsilon_1|, |v^\varepsilon_2|\}$ is bounded uniformly in $\varepsilon$ (by \cref{P2.1}),
we can see that $v^\varepsilon_i\geq \kappa_3\xi^\delta_i-M$ for some 
$\kappa_3$, using \cref{TB.1}, where $\bm{\xi}^\delta$ is same as in 
\cref{L2.5}.
 Now let
$\delta\to 1$ to get a lower bound that blows up at the boundary.
Using \cref{P2.1} and the fact $\{\varepsilon\bm{w}^\varepsilon(0)\}$
is bounded, we let
$\varepsilon\to 0$ in \eqref{ET2.3B}
to find a solution to \eqref{ET2.3A}.

Now consider the sequence of solutions $\{\bm{u}^n, \lambda_n\}$ solving
\eqref{ET2.3A}. We claim that $\lambda_n\geq\lambda_{n+1}\geq \lamstr$. 
Suppose, on the contrary, that $\lambda_n<\lambda_{n+1}$. Choose
a constant $\kappa$ so that $\bm{u}^{n+1}+\kappa$ touches $\bm{u}^n$ from below in $B_n$. This is possible as $\bm{u}^n$ blows up at the boundary.
Let $\bm{v}^n=\bm{u}^{n}-\bm{u}^{n+1}$. Also, note that
$$f^{n+1}_{i, \alpha}(x)= f_i(x)\leq f^n_{i, \alpha}\quad \text{in}\; B_n.$$
Choose $D\Subset B_n$, so that $\bm{v}^n$ vanishes at some point inside $D$.
From \eqref{ET2.3A} we then have
\begin{equation*}
\begin{cases}
-\Delta v^n_1 + h^n_1\cdot \grad v^n_1 + \alpha_1(x)(v^n_1-v^n_2) &\geq \lambda_{n+1}-\lambda_n>0\quad \text{in}\; D,
\\
-\Delta v^n_2 + h^n_2\cdot \grad v^n_1 + \alpha_2(x)(v^n_2-v^n_1) 
&\geq\lambda_{n+1}-\lambda_n>0\quad \text{in}\; D,
\end{cases}
\end{equation*}
where 
$$h^n_i(x)=\int_0^1 \grad_p H_i(x, \grad u^{n+1}_i + t(\grad u^n_i-
\grad u^{n+1}_i))\, \D{t}, \quad i=1,2.$$
By strong maximum principle we obtain $\bm{v}^n=0$ in $D$. Since $D$ is 
arbitrary, we must have $\bm{v}^n=0$ in $B_n$ which is a contradiction.
Thus we have $\lambda_n\geq \lambda_{n+1}$. An analogous argument also
shows $\lambda_n\geq\lamstr$.

Using the estimates in \cref{P2.1}, we can now find a subsequence of 
$\{\bm{u}^n\}$ converging weakly in $\Sobl^{2, p}(\RN)$ to some
$\bm{u}$. Passing limit in \eqref{ET2.3A} we see that $\bm{u}$ solves
\eqref{EP} with the eigenvalue $\lamstr$ (since $\lim_{n\to\infty}\lambda_n$ is equal to $\lamstr$). To see that $\bm{u}$ is bounded from below, we 
consider a point $(x_n, i_u)\in B_n\times\{1,2\}$ so that 
$u^n_{i_n}(x_n)$ is the minimum of $\bm{u}^n$ in $B_n$. From \eqref{ET2.3A}
we then obtain
$$\lambda_1\geq\lambda_n\geq f^n_{i_n}(x_n)\geq f_{i_n}(x_n)\geq
\min\{f_1(x_n), f_2(x_n)\}.$$
Since $f_i$ is coercive, we can find a compact set $\cK$, independent of
$n$, so that $x_n\in \cK$. Thus $\bm{u}^n\geq \min_{\cK}\{u^n_1, u^n_2\}$.
This, of course, implies that $\bm{u}$ is bounded from below. We can now
translate $\bm{u}$ to make it nonnegative. This completes the proof.
\end{proof}

We complete the section by mentioning few properties of 
$\lamstr=\lamstr(\bm{f})$.
\begin{proposition}
Let $\bm{f}, \tilde{\bm f}$ be two $C^1$ functions. Then
\begin{itemize}
\item[(i)] For any $c\in\RR$ we have $\lamstr(\bm{f}+c)=\lamstr(f)+c$.
\item[(ii)] $\bm{f}\mapsto \lamstr({\bm f})$ is concave, that is, for
$t\in[0, 1]$ we have
$$\lamstr(t\bm{f} + (1-t) \bm{f})\geq t \lamstr(\bm{f}) + (1-t)
\lamstr(\tilde{\bm f}).$$
\item[(iii)] If $\bm{f}\leq\tilde{\bm f}$, then $\lamstr(\bm{f})
\leq \lamstr(\tilde{\bm f})$. Furthermore, if we assume the setting of
\cref{T2.1} or \cref{T-Ext}, then for $\bm{f}\lneq\tilde{\bm f}$
we have $\lamstr(\bm{f}) < \lamstr(\tilde{\bm f})$.
\end{itemize}
\end{proposition}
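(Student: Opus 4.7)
The plan is to prove (i) and (ii) directly from the variational definition \eqref{lam} of $\lamstr$, and to split (iii) into the easy monotonicity and the strict inequality, the latter relying on the LP duality established in \cref{L2.2,L2.3,T2.1}. For (i), I observe that $(\bm u,\lambda)$ is a $\Cc^2$ subsolution of \eqref{EP} for source $\bm f$ if and only if $(\bm u,\lambda+c)$ is one for source $\bm f+c$, since moving the constant $c$ across the equality leaves every other term unchanged; taking suprema in \eqref{lam} yields $\lamstr(\bm f+c)=\lamstr(\bm f)+c$. For (ii), given $\Cc^2$ subsolutions $(\bm u,\lambda)$ for $\bm f$ and $(\tilde{\bm u},\tilde\lambda)$ for $\tilde{\bm f}$, I set $\bm w\df t\bm u+(1-t)\tilde{\bm u}$. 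The Laplacian, coupling, and source contributions are linear in $(\bm u,\bm f)$, while the convexity of $p\mapsto H_k(x,p)$ from (A1) gives
$$H_k(x,\grad w_k)\,\leq\, tH_k(x,\grad u_k)+(1-t)H_k(x,\grad \tilde u_k),$$
so $(\bm w,\,t\lambda+(1-t)\tilde\lambda)$ is a subsolution for $t\bm f+(1-t)\tilde{\bm f}$. Passing to the suprema proves the concavity.

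For (iii), the weak monotonicity is immediate: whenever $\bm f\leq\tilde{\bm f}$, any subsolution for $\bm f$ with eigenvalue $\lambda$ is automatically a subsolution for $\tilde{\bm f}$. For the strict inequality under the setting of \cref{T2.1} or \cref{T-Ext}, I would invoke the dual representation as follows. By \cref{T1.2} and \cref{T2.1}(a), $\tilde{\bm f}$ admits a nonnegative $\Cc^2$ solution $\tilde{\bm u}$ with eigenvalue $\lamstr(\tilde{\bm f})$, and \cref{L2.3} together with \cite[Theorem~5.3.4]{book} supplies an optimal measure $\bm\mu_{\tilde{\bm u}}=(\tilde\nu_k(\D x)\,\delta_{\tilde\xi_k(x)}(\D\xi))\in\eom_{\tilde{\bm F}}$ with $\bm\mu_{\tilde{\bm u}}(\tilde{\bm F})=\lamstr(\tilde{\bm f})$ and strictly positive marginal densities $\tilde\nu_k$. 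Since $\ell_k$ is bounded below (via Legendre duality with $H_k$ and \eqref{H1}), $\bm f$ is coercive, and $\bm f\leq\tilde{\bm f}$, one has $\bm\mu_{\tilde{\bm u}}(\bm F)\leq\bm\mu_{\tilde{\bm u}}(\tilde{\bm F})<\infty$, so $\bm\mu_{\tilde{\bm u}}\in\eom_{\bm F}$. Then \cref{L2.2} (or its analogue used in the proof of \cref{T-Ext}) gives $\lamstr(\bm f)\leq\bar\lambda(\bm f)\leq\bm\mu_{\tilde{\bm u}}(\bm F)$, whence
$$\lamstr(\tilde{\bm f})-\lamstr(\bm f)\,\geq\, \bm\mu_{\tilde{\bm u}}(\tilde{\bm F}-\bm F)\,=\,\sum_{k=1}^2\int_{\RN}(\tilde f_k-f_k)\,\tilde\nu_k(\D x)\,.$$
Because $\bm f\lneq\tilde{\bm f}$ with both sides continuous, $\tilde f_k-f_k$ is strictly positive on some nonempty open set for at least one $k$, and strict positivity of $\tilde\nu_k$ on that set forces the right-hand side to be strictly positive.

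The main obstacle I anticipate is the routine but nontrivial verification that $\bm\mu_{\tilde{\bm u}}\in\eom_{\bm F}$, i.e., that $\sum_k\int(f_k+\ell_k(x,\tilde\xi_k))\,\D\tilde\nu_k$ is finite. This reduces to the lower bound on $f_k$ from coercivity (in the \cref{T2.1} setting) or from the polynomial growth in hypothesis \hyperlink{F}{(F)} (in the \cref{T-Ext} setting), combined with the known finiteness of $\bm\mu_{\tilde{\bm u}}(\tilde{\bm F})$ and the nonnegativity-up-to-a-constant of $\ell_k$. Everything else in the plan is essentially algebraic and should proceed without complication.
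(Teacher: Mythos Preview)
Your treatment of (i), (ii), and the monotonicity in (iii) matches the paper's: all three follow directly from the definition \eqref{lam} and the convexity of $p\mapsto H_k(x,p)$.

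For the strict inequality in (iii) you take a genuinely different route from the paper. The paper argues by contradiction: assuming $\lamstr(\bm f)=\lamstr(\tilde{\bm f})$, it observes that $\tilde{\bm u}$ is a nonnegative supersolution of \eqref{EP} for the source $\bm f$, so (via the supersolution version of \cref{L2.3}) $\tilde{\bm\mu}_{\tilde{\bm u}}$ is optimal in \eqref{LP} for $\bm f$. Feeding this into the equality case of \eqref{EL2.2G} and using the \emph{strict convexity} of $H_k$ forces $\grad u_k=\grad\tilde u_k$, hence $u_k=\tilde u_k+c_k$; substituting back into the two equations yields $\alpha_1(c_2-c_1)=\tilde f_1-f_1$ and $\alpha_2(c_1-c_2)=\tilde f_2-f_2$, whose sum contradicts $\bm f\lneq\tilde{\bm f}$. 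Your argument instead uses the dual representation directly: from $\lamstr(\tilde{\bm f})=\bm\mu_{\tilde{\bm u}}(\tilde{\bm F})$ and $\lamstr(\bm f)\le\bm\mu_{\tilde{\bm u}}(\bm F)$ you read off
\[
\lamstr(\tilde{\bm f})-\lamstr(\bm f)\ \ge\ \sum_{k=1}^2\int_{\RN}(\tilde f_k-f_k)\,\tilde\nu_k(\D x)\ >\ 0,
\]
the last inequality coming from continuity of $\tilde f_k-f_k\ge0$ and strict positivity of the densities of $\tilde\nu_k$. This is correct and, in fact, more economical: it never invokes the strict convexity of $H_k$ or the equality case in \eqref{EL2.2G}, and it avoids the contradiction detour entirely. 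The paper's approach, on the other hand, reuses the machinery already assembled for the uniqueness proof in \cref{T2.1}(b), so it costs nothing extra in that context. Your worry about $\bm\mu_{\tilde{\bm u}}\in\eom_{\bm F}$ is handled exactly as you indicate: $F_k\le\tilde F_k$ pointwise and $F_k$ is bounded below (coercive $f_k$, $\ell_k\ge -C_1$ from \eqref{H1}), so $\bm\mu_{\tilde{\bm u}}(\bm F)\le\bm\mu_{\tilde{\bm u}}(\tilde{\bm F})<\infty$.
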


\begin{proof}
(i) is obvious. (ii) follows from the convexity of $H_i$ and the definition
\eqref{lam}. Also, first part of (iii) follows from the definition \eqref{lam}. To Prove the second part, we suppose, on the contrary, that
$\lamstr(\bm{f}) = \lamstr(\tilde{\bm f})$. Let 
$\tilde{\bm u}$ be a non-negative solution to \eqref{EP} with right-hand
side $\tilde{\bm f}$ and eigenvalue $\lamstr(\tilde{\bm f})$. Then
$\tilde{\bm u}$ would be a supersolution to \eqref{EP} with right-hand
side $\bm{f}$. From \cref{L2.3} we know that for
$$\tilde\xi_k(x) = \grad_p H_k(x, \grad \tilde{u}_k(x))\quad k=1,2,$$
there exists a Borel probability measure $\tilde{\bm\nu}=(\tilde\nu_1,\tilde\nu_2)$ 
so that
$$\tilde{\bm\mu}_{\tilde{\bm u}}=(\tilde\mu_{1, u}, \tilde\mu_{2, u})
\quad \text{with}\quad 
\tilde\mu_{k, \tilde{\bm u}}\df \tilde\nu_k (\D{x})
\delta_{\tilde\xi_k(x)}(\D\xi)\in \eom_{\bm{F}}.
$$
Moreover, $\tilde{\bm\mu}_{\tilde{\bm u}}(\bm{F})\leq\lamstr(\bm{f})$. 
By \cref{T2.1} or \cref{T-Ext} we must have $\tilde{\bm\mu}_{\tilde{\bm u}}(\bm{F})=\lamstr(\bm{f})$. Again, using \eqref{EL2.2G}, we obtain
$$\sum_{k=1}^2\int_{\RN}
\Bigl(\ell_k(x, \tilde\xi_k(x)) -\tilde\xi_k(x)\cdot \grad{u_k}+H_k(x, \grad u_k)\Bigr)
\,\tilde\nu_k(\D{x})=0.$$
Since $\tilde\nu_k$ has strictly positive densities (cf. \cite[Theorem~5.3.4]{book}), it follows that $\grad u_k=\grad \tilde{u}_k$.
Thus $u_k=\tilde{u}_k+c_k$ for some constants $c_k$ for $k=1,2$. Subtracting
the equation satisfied by $\bm{u}$ and $\tilde{\bm u}$ we obtain
$$\alpha_1(x)(c_2-c_1)=\tilde{f}_1(x)-f_1(x),
\quad \text{and}\quad \alpha_2(x)(c_1-c_2)=\tilde{f}_2(x)-f_2(x),$$
which implies
$$\frac{\tilde{f}_1(x)-f_1(x)}{\alpha_1(x)} +
\frac{\tilde{f}_2(x)-f_2(x)}{\alpha_2(x)}=0.$$
But this is not possible as $\bm{f}\lneq \tilde{\bm f}$. Hence we must have
$\lamstr(\bm{f}) < \lamstr(\tilde{\bm f})$.
\end{proof}

\subsection{Application to optimal ergodic control}\label{S-App}
In this section, we describe the optimal ergodic control problem associated with the system of equations \eqref{EP}. Denote by $\cS=\{1,2\}$, the state space of 
the switching continuous time Markov process.
We introduce the regime switching controlled diffusion process
on a given complete probability space $(\Omega, \sF, \Prob)$.
This is a process $(X_{t}, S_{t})$ in $\RN\times \cS$
governed by the following stochastic differential equations:
\begin{equation}\label{E2.29}
\begin{aligned}
\D X_t &\,=\, \bm{b}(X_t,S_t) \D t - U_t\, \D{t} +  \D W_t\,,\\
\D S_t &\,=\, \int_{\RR} h(X_t,S_{t^-},z)\wp(\D t, \D z)\,,
\end{aligned}
\end{equation}
for $t\ge 0$, where
\begin{itemize}
\item[(i)]
$(X_0, S_{0})$ are prescribed deterministic initial data;
\item[(ii)]
$W$ is an $N$-dimensional standard Wiener process;
\item[(iii)]
$\wp(\D t,\D z)$ is a Poisson random measure on $\RR_{+}\times\RR$ with intensity
$\D t\times \mathfrak{m}(\D z)$, where $\mathfrak{m}$ is the Lebesgue measure on $\RR$;
\item[(iv)]
$\wp(\cdot,\cdot)$, $W(\cdot)$ are independent;
\item[(v)]
The function $h\colon\Rd\times\cS \times \RR \to \RR$ is defined by
\begin{equation*}
h(x,i,z)\,\df\,\begin{cases}
j - i & \text{if}\,\, z\in \Delta_{ij}(x),\\[1mm]
0 & \text{otherwise},
\end{cases}
\end{equation*}
where for $i,j\in\cS, i\neq j,$ and fixed $x$, $\Delta_{ij}(x)$ are left closed right open
disjoint intervals of $\RR$ having length $m_{ij}(x)$, and
$$m_{11}(x)=-\alpha_1(x),\; m_{12}=\alpha_1(x),\; m_{21}(x)=\alpha_2(x),\;
m_{22}(x)=-\alpha_2(x).$$
\end{itemize}
Note that $\bm{M}(x)\df(m_{ij})$
 can be interpreted as the rate matrix of the
Markov chain $S_{t}$ given that $X_t=x$. In other words, 
\begin{equation*}
\Prob(S_{t+h}=j\, |\, X_t, S_t) \,=\, \begin{cases}
m_{S_t j}(X_t)h + \sorder(h) & \text{if\ } S_t\neq j\,,
\\[2mm]
1+ m_{S_t j}(X_t)h + \sorder(h) & \text{if\ } S_t = j\,,
\end{cases}
\end{equation*}
and $X$ behaves like an ordinary diffusion process governed by \eqref{E2.29} between two consecutive 
jumps of $S$. 

We assume $\bm{b}:\RN\times\cS\to\RN$ to be a bounded $\Cc^1$ function
with bounded first derivatives. The process 
$\{U_t\}$ takes values in $\RN$ and non-anticipative in nature, that is,
the sigma fields%
\nomenclature[Da]{$\sB(\cX)$}{collection of Borel subsets of $\cX$}%
$$\sigma\{X_0, S_0, W_s, U_s, \wp(A, B)\; :\; A\in \sB([0, s]),\,
B\in\sB(\RR), \, s\leq t\},$$
and
$$\sigma\{W_s-W_t, \wp(A, B)\; :\; A\in \sB([s, \infty)),\, B\in\sB(\RR)
,\, s\geq t\},$$
are independent. To introduce the admissible class of controls we set 
$\gamma_1=\gamma_2=\gamma$ and define
$$\Adm=\left\{U\; :\; \Exp\left[\int_0^T |U_t|^{\gamma'}\D{t}\right]<\infty
\quad \text{for all}\; T>0\right\},$$
where $\gamma'$ is the H\"{o}lder conjugate of $\gamma$.
We also assume $\tilde\ell_i$ to satisfy the following bound
$$\kappa^{-1}|\xi|^{\gamma'}-\kappa\leq\tilde\ell_i(x, \xi)\leq
\kappa(1+\abs{\xi}^{\gamma'}),$$
for some $\kappa>0$ and $\xi\mapsto\ell_i(x, \xi)$ are strictly
convex, $i=1,2$. We let
$$H_i(x, p)= -b_i(x)\cdot p + \sup_{\xi\in\RN}\{p\cdot\xi-\tilde\ell_i(x, \xi)\}
\quad i=1,2.$$
Also, assume that $H_i\in \Cc^1(\RN\times\RN)$ and 
the functions $\xi\mapsto H_i(x, \xi)$ are
strictly convex for $i=1,2$.
It can be easily shown that \eqref{E2.29} has a unique
strong solution for $U\in\Adm$. Now we can state the main result of this section.

\begin{theorem}\label{T2.4}
Consider the setting of \cref{T2.1} or \cref{T-Ext}. We also assume that $\gamma_1=\gamma_2=\gamma$.
Then 
\begin{equation}\label{ET2.4A}
\inf_{U\in\Adm}\;\liminf_{T\to\infty}
\frac{1}{T}\Exp\left[\int_0^T \bigl(\bm{f}(X_t, S_t) + \tilde{\bm\ell}(X_t,S_t,U_t)\D{t}\bigr)\right]\, =\, \lamstr.
\end{equation}
Furthermore, the stationary Markov control 
$$(\grad_p H_1(x,\grad u_1(x)), \grad_p H_2(x,\grad u_2(x)))+\bm{b}$$
is optimal where $\bm{u}$ is a non-negative solution to \eqref{EP} corresponding to the eigenvalue $\lamstr$. Furthermore, from 
\eqref{EL2.2G}, we also see that this is the only optimal stationary Markov control.
\end{theorem}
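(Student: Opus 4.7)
The plan is a verification argument built on the coercive non-negative solution $\bm u$ of \eqref{EP} produced by \cref{T2.3} (coercivity via \cref{L2.1}), combined with the convex-analytic machinery of \cref{L2.2,L2.3}. Decompose $H_k(x,p) = -b_k(x)\cdot p + \widetilde H_k(x,p)$ with $\widetilde H_k(x,p) = \sup_\xi\{p\cdot\xi - \tilde\ell_k(x,\xi)\}$, so that the Fenchel--Young inequality gives
$$\widetilde H_k(x, \grad u_k(x)) + \tilde\ell_k(x, \xi) \;\geq\; \xi\cdot\grad u_k(x) \qquad \forall\, \xi\in\RN,$$
with equality if and only if $\xi = \hat v_k(x) := \grad_p H_k(x, \grad u_k(x)) + b_k(x)$, which identifies $\hat v_{S_t}(X_t)$ as the candidate optimal feedback. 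The substitution $\eta_t := U_t - b_{S_t}(X_t)$ converts \eqref{E2.29} into $\D X_t = -\eta_t\,\D t + \D W_t$ and the running cost $\bm f + \tilde{\bm\ell}$ into $\bm F(\cdot,\eta_t)$ in the notation of \eqref{Cost}; under $\hat v$ one has $\eta_t = \grad_p H_{S_t}(X_t, \grad u_{S_t}(X_t))$, matching the $\xi_k$ of \cref{L2.3} exactly.

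For the lower bound I apply It\^o's formula (with the jump contribution from the Poisson measure driving $S$) to $u_{S_t}(X_t)$ along any admissible $U$, localize by $\tau_n := \inf\{t\geq 0 : |X_t| > n\}$, substitute $\Delta u_{S_t}$ via \eqref{EP}, and invoke the Fenchel--Young inequality above. Rearranging and exploiting $\bm u\geq 0$ gives
$$\frac{1}{T}\Exp\!\left[\int_0^T\bigl(f_{S_t}(X_t) + \tilde\ell_{S_t}(X_t, U_t)\bigr)\D t\right] \;\geq\; \lamstr + \frac{u_{S_0}(X_0)}{T} - \frac{1}{T}\Exp\!\left[u_{S_T}(X_T)\right].$$
If the left-hand $\liminf$ is finite, then along a sequence $T_n\to\infty$ realizing it the mean occupation measures $\bm\zeta^{T_n} := T_n^{-1}\int_0^{T_n}\Exp\bigl[\delta_{(X_t,\eta_t,S_t)}\bigr]\,\D t$ are tight on $\RN\times\RN\times\{1,2\}$ by coercivity of $\bm f$ and the $|\xi|^{\gamma'}$ growth of $\tilde\ell_k$. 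A standard semimartingale argument, as in \cite[Lemma~2.5.3]{book}, shows that every weak limit point lies in $\eom$; hence $\liminf_n\bm\zeta^{T_n}(\bm F) \geq \bar\lambda = \lamstr$ by \cref{L2.2} together with \cref{T2.1} or \cref{T-Ext}. This yields the $\geq$ direction of \eqref{ET2.4A}.

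For optimality of $\hat v$, plugging $\hat v$ into \eqref{E2.29} produces the closed-loop drift $-\grad_p H_{S_t}(X_t,\grad u_{S_t}(X_t))$; coercivity of $\bm u$ supplies the Lyapunov function that \cref{L2.3} uses to produce an invariant probability measure $\bm\nu$. Under $\hat v$ the Fenchel--Young inequality becomes an equality, so the It\^o computation above is sharp; averaging against $\bm\nu$ makes the boundary term $T^{-1}\Exp[u_{S_T}(X_T)]$ vanish in the Ces\`aro limit, yielding $J(\hat v)=\lamstr$ and hence \eqref{ET2.4A}. For uniqueness of the optimal stationary Markov control, let $v$ be any other optimizer and $\bm\mu_v\in\eom_{\bm F}$ an associated invariant measure. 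Equality in \eqref{EL2.2G} combined with the strict convexity of $\ell_k$ from (A1) forces $v - \bm b = \grad_p H_k(\cdot,\grad u_k)$ $\bm\mu_v$-a.e., and the strict positivity of the density of $\bm\mu_v$ (cf.\ \cite[Theorem~5.3.4]{book}) upgrades this to an everywhere identity, so $v = \hat v$.

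The main technical obstacle will be the tightness and infinitesimal invariance of $\bm\zeta^{T_n}$ in the lower bound step, since admissible $U$ need not be Markovian and the controlled $(X,S)$ need not be recurrent. Coercivity of $\bm f$ is what confines the spatial mass of $\bm\zeta^{T_n}$, while the polynomial growth of $\tilde\ell_k$ controls mass in the $\xi$-variable; the identity $\bm\zeta^\infty(\bAg \bm g)=0$ for all $\bm g\in\Cc^2_c(\RN\times\{1,2\})$ is then verified through the semimartingale decomposition of $g_{S_t}(X_t)$ and passage to the limit, the boundary contributions being controlled by $\|\bm g\|_\infty/T_n$.
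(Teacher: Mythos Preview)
Your proposal is correct and follows essentially the same route as the paper: the lower bound via tightness of the mean empirical measures $\bm\zeta^{T_n}$, their limit points lying in $\eom_{\bm F}$ (cf.\ \cite[Lemma~2.5.3]{book}), and then \cref{L2.2}; the upper bound and optimality of the feedback via \cref{L2.3} and \cref{T2.1}(a); and uniqueness via equality in \eqref{EL2.2G} and strict convexity. One remark: the direct It\^o inequality you display for the lower bound,
\[
\frac{1}{T}\Exp\!\left[\int_0^T\bigl(f_{S_t}(X_t) + \tilde\ell_{S_t}(X_t, U_t)\bigr)\D t\right] \;\geq\; \lamstr + \frac{u_{S_0}(X_0)}{T} - \frac{1}{T}\Exp\bigl[u_{S_T}(X_T)\bigr],
\]
does not by itself give $\geq\lamstr$ since $\bm u$ is coercive and the last term can be arbitrarily negative; you correctly bypass this by the empirical-measure argument, which is exactly what the paper does, so the It\^o display is redundant scaffolding and can be dropped.
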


\begin{proof}
We only show that the lhs of \eqref{ET2.4A} is larger than $\lamstr$.
Rest of the proof follows from \cref{T2.1} or \cref{T-Ext}. Consider $U\in\Adm$ so that 
\begin{equation}\label{ET2.4B}
\liminf_{T\to\infty}
\frac{1}{T}\Exp\left[\int_0^T \bigl(\bm{f}(X_t, S_t) + \bm{\ell}(X_t,S_t, U_t)\D{t}\bigr)\right]
=
\lim_{T_n\to\infty}
\frac{1}{T_n}\Exp\left[\int_0^{T_n} \bigl(\bm{f}(X_t, S_t) + \bm{\ell}(X_t, S_t, U_t)\D{t}\bigr)\right]<\infty.
\end{equation}
We define the mean empirical measure as on $\RN\times\RN\times\cS$ as
follows
$$\bm{\mu}^n(A_1\times A_2\times C)=\frac{1}{T_n}
\Exp\left[\int_0^{T_n} \Ind_{A_1\times C\times A_2}(X_t,S_t, U_t)\D{t}\bigr)\right], \quad A_i\in \sB(\RN), C\subset \cS.$$
From the definition of $\bm{\mu}^n$ it follows that 
$$\bm{\mu}^n(\bm{F})=\frac{1}{T_n}\Exp\left[\int_0^{T_n} \bigl(\bm{f}(X_t, S_t) + \bm{\ell}(X_t, S_t, U_t)\D{t}\bigr)\right],$$
where $\bm{F}$ is given by \eqref{Cost}. From the 
coercivity property of $\bm{F}$ it can be easily seen that $\{\bm{\mu}^n\}$
is tight. Let $\bm{\mu}$ be a sub-sequential limit of $\{\bm{\mu}^n\}$.
Using \cite[Lemma~2.5.3]{book} and the lower-semicontinuity property of
weak convergence we see that $\bm{\mu}\in\eom_{\bm F}$. Again, from
\eqref{ET2.4B}, we get
$$\liminf_{T\to\infty}
\frac{1}{T}\Exp\left[\int_0^T \bigl(\bm{f}(X_t, S_t) + \bm{\ell}(X_t,S_t, U_t)\D{t}\bigr)\right]\geq \bm{\mu}(\bm{F}).$$
By \cref{L2.2} we obtain
$$\liminf_{T\to\infty}
\frac{1}{T}\Exp\left[\int_0^T \bigl(\bm{f}(X_t, S_t) + \bm{\ell}(X_t,S_t, U_t)\D{t}\bigr)\right]\geq\lamstr.$$
This completes the proof.
\end{proof}

\appendix

\section{Proof of \cref{P2.1}}\label{App-P2.1}
Part of the proof of this Proposition is inspired from \cite{Ichihara-12}.

\begin{proof}
With no loss of generality, we assume that $z=0$, $B_1=B_1(0)$, and 
$B_2=B_2(0)$. We first show that
\begin{equation}\label{EP2.1D}
	\sup_{B_1}\{|\grad u_1|^{2\gamma_1}, |\grad u_2|^{2\gamma_2}\}\leq C\big(1+ \sup_{B_2}\sum_{i=1}^2(f_i)_+^{2}
	+ \sup_{B_2}\sum_{i=1}^2|\grad f_i|^{2\gamma_i/(2\gamma_i-1)}+ |u_1(0)-u_2(0)|^{2}+\sup_{B_2}\sum_{i=1}^2(\varepsilon u_i)_{-}^{2}\big).
\end{equation}
Let $\rho:B_2\to [0, 1]$ be smooth, radial function which is decreasing along the radius, 
	$\rho=1$ in $B_1$, and  $\supp(\rho)\subset B_2$. We take 
	$\gamma=\min\{\gamma_1,\gamma_2\}$ and define $\eta=\rho^{\frac{4\gamma}{\gamma-1}}$. Without loss of generality we may assume that 
	$$\max_{B_2}\{\eta|\grad u_1|^{2}, \eta|\grad u_2|^{2}\}=\eta(x_0)|\grad u_1(x_0)|^{2}\quad \text{for some $x_0$ in }\, B_2.$$
	Define $\theta(x)=\eta(x)|\grad u_1(x)|^{2}=\eta(x) w(x)$
	where $w(x)=|\grad u_1(x)|^{2}$. Then we have 
	$\grad \theta(x_0)=0$ and $\Delta \theta(x_0)\leq 0$. 
	We may also assume that $\theta(x_0)>1$. Otherwise, if 
	$\theta(x_0)\leq 1$, 
	we get 
	$$\max_{B_1}\{\eta|\grad u_1|^{2}, \eta|\grad u_2|^{2}\}\leq \theta(x_0)\leq 1,$$ 
	and \eqref{EP2.1D} follows. Therefore, we work with $\theta(x_0)>1$. We see that
	\begin{equation}\label{EP2.1E}
		0=\grad \theta(x_0) = \eta(x_0)\grad w(x_0) + w(x_0)\grad \eta(x_0).
	\end{equation}
	Now onward we shall evaluate everything at the point $x=x_0$, 
	without explicitly mentioning the point $x_0$. Then
	\begin{align*}
		\Delta w &= \trace[(D^2 u_1)^2] + \grad(\Delta u_1)\cdot \grad u_1
		\\
		&=\trace[(D^2 u_1)^2] + \grad(H_1(x,\grad u_1) + \alpha_1(u_1-u_2)+\varepsilon u_1 -f_1)\cdot \grad u_1
		\\
		&=\trace[(D^2 u_1)^2] +\bigg[\grad_xH_1+(\grad_{p}H_1) D^2u_1
		+ (u_1-u_2)\grad\alpha_1
		+\alpha_1(\grad u_1-\grad u_2)+\varepsilon \grad u_1 -\grad f_1\bigg]\cdot \grad u_1.
\end{align*}
Using \eqref{EP2.1E}, we then obtain
\begin{align*}
0&\geq \Delta \theta=\eta\Delta w +2\grad\eta\cdot\grad w + w\Delta \eta
	\\
&=\eta\bigg[\trace[(D^2 u_1)^2] +  \grad_xH_1\cdot\grad u_1+(-2w\eta^{-1})\grad \eta\cdot \grad_{p}H_1+ (u_1-u_2)\grad\alpha_1\cdot\grad u_1
\\
&+\alpha_1(\grad u_1-\grad u_2)\cdot \grad u_1+\varepsilon w -\grad f_1\cdot \grad u_1\bigg]-2\eta^{-1}w|\grad\eta|^2+w\Delta\eta
\\
&\geq \eta\bigg[\trace[(D^2 u_1)^2] -|\grad_xH_1||\grad u_1|- 2w\eta^{-1}|\grad_{p}H_1| |\grad \eta|+ (u_1-u_2)\grad\alpha_1\cdot\grad u_1
\\
&+\alpha_1(\grad u_1-\grad u_2)\cdot \grad u_1 -|\grad f_1||\grad u_1|\bigg]-2\eta^{-1}w|\grad\eta|^2-w|\Delta\eta|.
\end{align*}
Using \eqref{EP2.1A} , \eqref{H1} and the inequality $(t_1+t_2+t_3+t_4)^2
\geq \frac{1}{4} t_1^2 -[(t_2)^2_{-}+ (t_3)^2_{-}+(t_4)^2_{-}]$,
we get (taking $t_1=H_1+C_1\geq 0$)
\begin{equation*}
N\trace[(D^2 u_1)^2]\geq (\Delta u_1)^2 \geq \bigg(\frac{1}{4C^2_1}|\grad u_1|^{2\gamma_1} -(f_1+C_1)_+^2-\alpha_1^2(u_1-u_2)^2-(\varepsilon u_1)_{-}^2\bigg).
\end{equation*}
Since $N\geq 1$ and $\eta\leq 1$, we obtain
\begin{align}\label{EP2.1F}
\frac{1}{4NC^2_1}\eta|\grad u_1|^{2\gamma_1}
&\leq \eta\trace[(D^2 u_1)^2] +(f_1+C_1)_+^2+\eta\alpha_1^2(u_1-u_2)^2+(\varepsilon u_1)_{-}^2\nonumber
\\
&\leq (f_1 + C_1)_+^2+\eta\alpha_1^2(u_1-u_2)^2+(\varepsilon u_1)_{-}^2
+\eta|\grad_xH_1||\grad u_1|+ 2w|\grad_{p}H_1| |\grad \eta|\nonumber
 \\
&\quad -\eta(u_1-u_2)\grad\alpha_1\cdot\grad u_1
-\eta\alpha_1(\grad u_1-\grad u_2)\cdot \grad u_1\nonumber
\\
&\qquad +\eta|\grad f_1||\grad u_1|+2 \eta^{-1} w|\grad\eta|^2 +  w|\Delta\eta|.
\end{align}
We observe that
	$$\eta(x_0)\alpha_1(x_0)(|\grad{u_1}(x_0)|^2-\grad u_2(x_0)\cdot \grad u_1(x_0))
	\geq \eta(x_0)\alpha_1(x_0)(|\grad{u_1}(x_0)|^2-|\grad u_2(x_0)||\grad u_1(x_0)|)\geq 0.$$
Also, by Mean Value Theorem, there exist $\zeta\in B_2$
, with $|\zeta|<|x_0|$,
and a constant $\kappa_1>0$, dependent on  $\sup_{B_2}|\alpha_1|$,
	 such that 
\begin{align*}
\eta(x_0)\alpha_1^2(u_1(x_0)-u_2(x_0))^2
&\leq \eta(x_0) \kappa_1 \big(|\grad u_1(\zeta)-\grad u_2(\zeta)|^2 + |u_1(0)-u_2(0)|^2\big)
\\
&\leq \eta(\zeta) \kappa_1\big(|\grad u_1(\zeta)-\grad u_2(\zeta)|^2 + |u_1(0)-u_2(0)|^2\big)
\\
& \leq \kappa_1\big(4\theta(x_0)+|u_1(0)-u_2(0)|^2\big),
\end{align*}
where in the second line we use the fact that $\eta$ is radially decreasing.
Another application of the Mean Value Theorem and  a similar  estimate
as above gives us, for some $\zeta_1$ with $|\zeta_1|<|x_0|$,
\begin{align*}
-\eta(x_0)(u_1(x_0)-u_2(x_0))\grad\alpha_1(x_0)\cdot\grad u_1(x_0)
&\leq  \eta(x_0)|u_1(x_0)-u_2(x_0)||\grad \alpha_1(x_0)||\grad u_1(x_0)|
\\
& \leq \kappa_2  \sqrt{\eta(x_0)}\big(|\grad u_1(\zeta_1)|+|u_1(0)-u_2(0)|\big)
\sqrt{\theta(x_0)}
\\
& \leq \kappa_2 \big(\sqrt{\eta(\zeta_1)}|\grad u_1(\zeta_1)|+|u_1(0)-u_2(0)|\big)
\sqrt{\theta(x_0)}
\\
&\leq \kappa_2 \big(2\theta(x_0) + |u_1(0)-u_2(0)|^2\big),
\end{align*}
for some constant $\kappa_2$ dependent on $\sup_{B_2}|\grad \alpha_1|$,
where in the last part we used $ab\leq{2}^{-1}(a^2+b^2)$. Again, using \cref{H2}-\eqref{H2A} and above three estimates in \eqref{EP2.1F} we deduce
that for some constant $\kappa_3$, dependent only on the bounds of $\alpha_1$, it holds
\begin{align}\label{EP2.1G}
&\frac{1}{4N C^2_1}\eta|\grad u_1|^{2\gamma_1} \nonumber
\\
&\quad\leq 2(f_1)_+^2 + 2C^2_1 + (\varepsilon u_1)_{-}^2+ C_1\eta(1+|\grad u_1|^{\gamma_1})|\grad u_1|
+ 2\tilde{C}_1 (1+|\grad u_1|^{\gamma_1-1}) \abs{\grad u_1}^2|\grad \eta|
\nonumber
\\
&\qquad + \kappa_3\big(\eta|\grad u_1|^2+|u_1(0)-u_2(0)|^2\big)+\eta|\grad f_1||\grad u_1|+|\grad u_1|^2(2\eta^{-1}|\grad\eta|^2+ |\Delta\eta|).
\end{align}
Using Young's inequality for appropriate $\delta>0$ to $|\grad u_1||\grad f_1|$, we obtain  $\kappa_\delta>0$ satisfying
	$$|\grad u_1||\grad f_1|\leq \delta|\grad u_1|^{2\gamma_1}+ \kappa_\delta|\grad f_1|^{2\gamma_1/(2\gamma_1-1)}.$$
Since $|\grad u_1(x_0)|\geq 1$, and $\gamma_1>1$, we also have 
$$(1+|\grad u_1|^{\gamma_1})|\grad u_1|\leq 2|\grad u_1|^{\gamma_1+1},
\quad \text{and}\quad (1+|\grad u_1|^{\gamma_1-1}) \abs{\grad u_1}^2
\leq 2 \abs{\grad u_1}^{\gamma+1}.
$$
Thus, from \eqref{EP2.1G} we obtain a constant $\kappa_4>0$, dependent on $N, C_1,
\kappa_1, \kappa_2,\kappa_3.$ and $\kappa_\delta$,  such that 
\begin{align*}
\eta|\grad u_1|^{2\gamma_1} &\leq  \kappa_4\bigg(1+ (f_1)_+^2+|u_1(0)-u_2(0)|^2
+(\varepsilon u_1)_{-}^2 + |\grad f_1|^{2\gamma_1/(2\gamma_1-1)}
\\
&+|\grad u_1|^{\gamma_1+1}  |\grad \eta|+|\grad u_1|^2\big(2\eta^{-1}|\grad\eta|^2+ |\Delta\eta|\big)\bigg).
\end{align*}
Now we define $V(x_0)=\eta(x_0)|\grad u_1(x_0)|^{2\gamma_1}$ and 
$\beta= \frac{\gamma_1+1}{2\gamma_1}\in(\frac{1}{\gamma_1},1)$. Then 
\begin{align*}
\eta|\grad u_1|^{2\gamma_1}& \leq  \kappa_4 \bigg(1+ (f_1)_+^2+|u_1(0)-u_2(0)|^2+(\varepsilon u_1)_{-}^2+|\grad f_1|^{2\gamma_1/(2\gamma_1-1)}
\\
&\quad +V^\beta\eta^{-\beta} |\grad \eta|+V^{1/\gamma_1}\big(2\eta^{-(\gamma_1+1)/\gamma_1}|\grad\eta|^2+ \eta^{-1/\gamma_1}|\Delta\eta|\big)\bigg)
\\
&\leq \kappa_4\bigg(1+ (f_1)_+^2+|u_1(0)-u_2(0)|^2+(\varepsilon u_1)_{-}^2+|\grad f_1|^{2\gamma_1/(2\gamma_1-1)}\bigg)
\\
&\quad + \kappa_4 V^\beta\bigg(\eta^{-\beta} |\grad \eta|+2\eta^{-2\beta}|\grad\eta|^2+ \eta^{-\beta}|\Delta\eta|\bigg),
\end{align*}
where in the last line we used $V(x_0)\geq (\eta(x_0)|\grad u_1|^2)^{\gamma_1}>1$, $\eta\leq 1$ and $\frac{1}{\gamma_1}<\beta$. 
To conclude the proof of \eqref{EP2.1D} it is enough to show that 
$\eta^{-\beta} |\grad \eta|$ and $\eta^{-\beta}|\Delta\eta|$ are bounded quantities. Recall that $\eta=\rho^\tau$ where $\tau={\frac{4\gamma}{\gamma-1}}$ with $\gamma=\min\{\gamma_1,\gamma_2\}$. It is easily seen that 
	$\tau= \max\{\frac{4\gamma_1}{\gamma_1-1}, \frac{4\gamma_2}{\gamma_2-1}\}$.
	A simple calculation yields
	\begin{align*}
		\eta^{-\beta} |\grad \eta|&=\tau\rho^{\tau-1-\tau\beta}|\grad\rho|,
		\\
		\eta^{-\beta}|\Delta\eta| &\leq\tau\{\rho^{\tau-1-\tau\beta}|\Delta\rho|+(\tau-1)\rho^{\tau-2-\tau\beta}|\grad\rho|^2\}.
	\end{align*}
	We observe that $1-\beta=\frac{\gamma_1-1}{2\gamma_1}$, and thus,
\begin{align*}
		\tau(1-\beta)-1\geq \frac{\gamma_1-1}{2\gamma_1}{\frac{4\gamma_1}{\gamma_1-1}}-1=1,\quad 
		\text{ and }\quad  \tau(1-\beta)-2\geq 0.
\end{align*}
	Hence, there exist constant $C>0$ satisfying 
\begin{align*}
		\eta(x_0) |\grad u_1|^{2\gamma_1}\leq C\bigg(1+ (f_1)_+^2+|u_1(0)-u_2(0)|^2+(\varepsilon u_1)_{-}^2+|\grad f_1|^{2\gamma_1/(2\gamma_1-1)}\bigg).
\end{align*}
Now taking supremum over $B_2$, we can write
\begin{equation*}
		\sup_{B_1}\{|\grad u_1|^{2\gamma_1},|\grad u_2|^{2\gamma_2}\}\leq C\big(1+ \sup_{B_2}(f_1)_+^2
		+ \sup_{B_2}|\grad f_1|^{2\gamma_1/(2\gamma_1-1)}+ |u_1(0)-u_2(0)|^2+\sup_{B_2}(\varepsilon u_1)_{-}^2\big).
\end{equation*}
If the maximum is attained at the second component we can repeat an 
analogous argument.  This gives us \cref{EP2.1D}.

Next, we prove \eqref{EP2.1C}.
Suppose, on the contrary, that there exists 
$\{(u^n_i, f^n_i, \alpha^n_i, \varepsilon_n)\}_n$ with $\alpha^n_i$ satisfying \eqref{EA1.1A}, and
\begin{equation}\label{EP2.1H}
\begin{cases}
-\Delta u^n_1(x) + H_1(x,\grad u^n_1) + \alpha^n_1(x)(u^n_1(x)-u^n_2(x))+\varepsilon u^n_1(x) &=\,f^n_{1}(x) \quad \text{in}\; D,
\\
-\Delta u^n_2(x) + H_2(x,\grad u^n_2) + \alpha^n_2(x)(u^n_2(x)-u^n_1(x))+\varepsilon_n u^n_2(x) &=\,f^n_{2}(x) \quad \text{in}\; D,
\end{cases}
\end{equation}
 and 
\begin{equation}\label{EP2.1I}
|u^n_1(0)-u^n_2(0)|^2 > n \bigl(1+ \sup_{B_2}\sum_{i=1}^2(f_i^n)_+^2
		+ \sup_{B_2}\sum_{i=1}^2|\grad f_i^n|^{2\gamma_i/(2\gamma_i-1)}+\sup_{B_2}\sum_{i=1}^2(\varepsilon u_i)_{-}^{2}\bigr).
	\end{equation}
First of all note that we can always set $u^n_1(0)=0$. Therefore,
by \eqref{EP2.1I}, we see that
$|u^n_2(0)|\to \infty$. Suppose that there is a subsequence,
denoted by the actual sequence, along which
 $u^n_2(0)\to \infty$.
	Define $v^n_i =\frac{1}{u^n_2(0)} u^n_i$.
Since $a^2\leq \kappa_i + a^{2\gamma_i}$ for some $\kappa_i$, for all $a\geq 0$, using \eqref{EP2.1D} and \eqref{EP2.1I} we find that 
$$\sup_{B_1}\{|\grad{v^n_1}|^{2\gamma_1},  |\grad{v^n_2}|^{2\gamma_2}\}< C \quad \text{ for all }\; n.$$
Since $(v^n_1(0), v^n_2(0))=(0, 1)$, from above estimate if follows that
$\sup_{B_1}(|v^n_1| + |v^n_2|)$ uniformly bounded in $n$. Using \eqref{H1}
and \eqref{EP2.1I} we also get
\begin{equation}\label{EP2.1J}
\sup_n\; \sup_{B_1} [\frac{1}{u_2(0)} |H_1(x, \grad u_1)|+
\frac{1}{u_2(0)} |H_2(x, \grad u_1)|]< \widehat{C}.
\end{equation}
Therefore, it follows from \eqref{EP2.1H} that
$\norm{v^n_1}_{\Sob^{2, p}(B_{\frac{1}{2}})}$, and $\norm{v^n_2}_{\Sob^{2, p}(B_{\frac{1}{2}})}$ are uniformly bounded in $n$ (cf. \cite[Theorem~9.11]{GilTru}) for any $p>N$,
and hence we can extract a weakly convergence subsequence converging to some 
$v=(v_1, v_2)\in \Sob^{2, p}(B_{\frac{1}{2}})\times \Sob^{2, p}(B_{\frac{1}{2}})$.
From the Sobolev embedding we also see that $v^n_2\to v_2$ in $C^{1, \alpha}(B_{\frac{1}{2}})$. Since $|\grad v_i^n| \to |\grad v_i| $ in $B_{\frac{1}{2}}$ and $\sup_{n}\, \sup_{B_{\frac{1}{2}}}\frac{1}{|u^n_2(0)|}|\grad u^n_i|^{\gamma_i}$
is bounded, by \eqref{H1} and \eqref{EP2.1J}, it follows that $\grad v_i=0$
in $B_{\frac{1}{2}}$. Thus, $v=(0, 1)$ in $B_{\frac{1}{2}}$. 
Now from the second equation of \eqref{EP2.1H} we get
$$-\Delta v^n_2 + \alpha^n_2 (v^n_2-v^n_1)= \frac{1}{u^n_2(0)} f^n_2 - 
	\frac{1}{ u^n_2(0)}H_2(x,\grad u^n_2)\leq
	\frac{1}{u^n_2(0)} f^n_2 + \frac{C_1}{u^n_2(0)},
$$ 
by  \eqref{H1}.  Let $\varphi$ be a nonzero, non-negative test function
supported in $B_{\frac{1}{2}}$. Multiplying the above equation by
$\varphi$, integrating over $B_{\frac{1}{2}}$ and letting $n\to\infty$
we obtain
\begin{align*}
\upalpha^{-1}_0\int_{B_{\frac{1}{2}}} \varphi(x)\D{x} 
&\leq \liminf_{n\to \infty} \int_{B_{\frac{1}{2}}}\alpha^n_2(x) v^n_2(x) \varphi(x)\D{x}
\\
&\leq \liminf_{n\to \infty} \int_{B_{\frac{1}{2}}}\varphi\big[\Delta v^n_2 + \frac{1}{u^n_2(0)} f^n_2+\alpha_2^n v_1^n + \frac{C_1}{u^n_2(0)}\big]\D{x}=0,
\end{align*}
where we use the fact that  $\sup_{B_{1/2}}|\alpha_2^n v_1^n|\leq
\upalpha_0\sup_{B_{1/2}}|v^n_1|\to 0$. Thus we arrive at a contradiction.

A similar contradiction is also arrived is $u^n_2(0)\to -\infty$ along
some subsequence. This establishes \eqref{EP2.1C}.

Finally \eqref{EP2.1B} follows from \eqref{EP2.1C} and \eqref{EP2.1D}. This completes the proof.
\end{proof}
\section{Existence results in bounded domains}\label{App-Exis}
By $D$ we denote a bounded $\Cc^{2, \delta}$ domain in $\RN$
for some $\delta>0$.
\begin{theorem}[Comparison principle]\label{TB.1}
Let $H_i\in \Cc^{1}(\RN\times\RN), i=1,2$ be given functions. Let 
$\bm{u}=(u_1, u_2)\in \Cc^2(D\times\{1, 2\})\cap \Cc^1(\bar{D}\times\{1, 2\})$
be a subsolution to
\begin{equation}\label{ETB.1}
\begin{split}
-\Delta u_1 + H(x, \grad u_1) + \alpha_1(x)(u_1-u_2)&= f_1\quad \text{in}\; D,
\\
-\Delta u_2 + H(x, \grad u_2) + \alpha_2(x)(u_2-u_1) &= f_2\quad \text{in}\; D,
\end{split}
\end{equation}
and $\bm{v}=(v_1, v_2)\in \Cc^2(D\times\{1, 2\})\cap \Cc^1(\bar{D}\times\{1, 2\})$
be a supersolution to \eqref{ETB.1}. Moreover, assume that $\bm{v}\geq \bm{u}$ on $\partial D$. Then we have $\bm{v}\geq \bm{u}$ in $\bar{D}$.
\end{theorem}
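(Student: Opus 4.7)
The plan is to linearize the two Hamilton--Jacobi inequalities, reducing the comparison problem to a linear weakly coupled (quasi-monotone) elliptic system, and then to invoke a Hopf-type strong maximum principle component-by-component.

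Set $w_i \df u_i - v_i$ for $i=1,2$; by hypothesis $w_i \leq 0$ on $\partial D$, and the goal is to prove $w_i \leq 0$ in $\bar D$. Since $H_i \in \Cc^1(\RN \times \RN)$ and $u_i, v_i \in \Cc^1(\bar D)$, the fundamental theorem of calculus yields bounded continuous vector fields
\[
b_i(x) \,\df\, \int_0^1 \grad_p H_i\bigl(x,\, \grad v_i(x) + t\,\grad w_i(x)\bigr)\, \D{t},
\]
satisfying $H_i(x, \grad u_i(x)) - H_i(x, \grad v_i(x)) = b_i(x)\cdot\grad w_i(x)$ on $\bar D$. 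Subtracting the sub- and supersolution inequalities for $(i,j) = (1,2)$ and $(2,1)$ produces the linear quasi-monotone system
\[
-\Delta w_i + b_i(x)\cdot\grad w_i + \alpha_i(x)\bigl(w_i - w_j\bigr) \,\leq\, 0 \qquad \text{in } D,\ i = 1,2.
\]

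Assume without loss of generality that $D$ is connected (otherwise argue on each connected component separately), and suppose toward contradiction that $M \df \max_{i \in \{1,2\}}\,\max_{\bar D} w_i > 0$. Since $w_i \leq 0$ on $\partial D$, this maximum is attained at some interior point $x_0 \in D$ and some index $k \in \{1,2\}$; let $j$ denote the other index. Using the global bound $w_j \leq M$ on $\bar D$ together with $\alpha_k \geq 0$, the $k$-th inequality above absorbs the coupling term, giving
\[
-\Delta \psi + b_k(x)\cdot\grad \psi + \alpha_k(x)\,\psi \,\geq\, 0 \quad \text{in } D, \qquad \psi \df M - w_k \geq 0, \quad \psi(x_0) = 0.
\]
Since $b_k$ and $\alpha_k$ are bounded on $\bar D$ and $\alpha_k \geq 0$, Hopf's strong maximum principle (\cite[Theorem~3.5]{GilTru}, applied to $-\psi$) forces $\psi \equiv 0$ throughout $D$. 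By continuity, $w_k \equiv M$ on $\bar D$, so $w_k = M > 0$ on $\partial D$, contradicting $w_k \leq 0$ there. Hence $M \leq 0$, i.e., $\bm v \geq \bm u$ in $\bar D$.

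The only subtlety is the treatment of the coupling term $-\alpha_k w_j$ in the $k$-th inequality, which carries the ``wrong'' sign if viewed in isolation; this is precisely where the quasi-monotone structure of the system (off-diagonal coefficients $-\alpha_i \leq 0$) becomes decisive. At the point where the global maximum $M$ is attained, the bound $w_j \leq M$ combined with $\alpha_k \geq 0$ converts the coupling into a nonnegative zero-order term on the left-hand side, reducing the problem to a scalar strong maximum principle and closing the argument.
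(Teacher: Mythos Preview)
Your proof is correct. Both you and the paper begin with the identical linearization step---writing $w_i$ as the difference of sub- and supersolution and using the integral form of the mean value theorem to produce bounded drifts $b_i$ (the paper's $h_i$), thereby reducing to a linear quasi-monotone system. At that point the paper simply invokes the maximum principle for weakly coupled cooperative systems from Busca--Sirakov \cite[Theorem~3.1]{BS04} and Sirakov \cite[Theorem~1]{Sirakov}, whereas you supply a direct elementary argument: take the global interior maximum $M>0$ in the component $k$, use $w_j\le M$ and $\alpha_k\ge0$ to absorb the coupling into a nonnegative zero-order term, and then apply the scalar strong maximum principle to $\psi=M-w_k$. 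Your route is more self-contained and avoids the external references; the paper's route is shorter on the page but hides exactly the mechanism (the quasi-monotone structure letting the coupling term be dominated at the maximum) that you make explicit in your final paragraph.
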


\begin{proof}
Write $w_i=v_i-u_i$. Then it follows from \eqref{ETB.1} that
\begin{equation*}
\begin{split}
-\Delta w_1 + h_1(x)\cdot \grad w_1 + \alpha_1(x)(w_1-w_2)&\geq 0\quad \text{in}\; D,
\\
-\Delta w_2 + h_2(x)\cdot \grad w_2 + \alpha_2(x)(w_2-w_1) &\geq 0\quad \text{in}\; D,
\end{split}
\end{equation*}
where 
$$ h_i(x) =\int_0^1 \grad_p H_i(x, \grad u_i(x) + t (\grad v_i(x)-
\grad u_i(x))) \D{t}, \quad i=1,2.$$ 
The result follows by applying the maximum principle, 
Busca-Sirakov \cite[Theorem~3.1]{BS04}, Sirakov \cite[Theorem~1]{Sirakov}.
\end{proof}

We next recall an existence result from \cite{AC78}. Let 
${K}_i:\bar{D}\times\RN\to\RR, i=1,2,$ be two continuous functions satisfying
$$|K_i(x, \xi)|\leq \kappa (1+|\xi|^2)\quad \text{for all}\; 
(x, \xi)\in \bar{D}\times\RN, \; i=1,2,$$
for some constant $\kappa$. We also assume that $\xi\mapsto K_i(x, \xi)$
is continuously differentiable.
\begin{theorem}\label{TB.2}
Let $\bar{\bm v}, \underline{\bm v}\in \Cc^2(\bar{D}\times\{1, 2\})$ be 
respectively a subsolution and supersolution to
\begin{align*}
-\Delta u_1 + K_1(x, \grad u_1) + \alpha_1(u_1-u_2)&=0\quad \text{in}\; D,
\\
-\Delta u_2 + K_2(x, \grad u_2) + \alpha_1(u_2-u_1)&=0\quad \text{in}\; D,
\\
u_1, u_2 &=0 \quad \text{on}\; \partial D.
\end{align*}
Also, assume that $\underline{\bm v}\leq \bar{\bm v}$ in $D$. Then there
exists a solution $\bm{u}\in \Sob^{2,p}(D\times\{1,2\})\cap \Cc(\bar{D}\times\{1,2\})$ of the above equations satisfying 
$\underline{\bm v}\leq\bm{u}\leq \bar{\bm v}$.
\end{theorem}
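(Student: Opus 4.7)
The plan is to obtain the solution via a Schauder fixed-point argument on the order interval
\[
\cI \,\df\, \{\bm w \in \Cc(\bar D\times\{1,2\})\,:\, \underline{\bm v}\le \bm w\le \bar{\bm v}\},
\]
exploiting the quasi-monotone structure of the coupling term $\alpha_i(u_i-u_j)$. The idea is to decouple the two equations: given $\bm w=(w_1,w_2)\in\cI$, for each $i=1,2$ I solve the scalar Dirichlet problem
\begin{equation*}
-\Delta u_i + K_i(x,\grad u_i) + \alpha_i(x)\,u_i \,=\, \alpha_i(x)\, w_j,\qquad u_i=0 \ \text{on}\ \partial D,\qquad j\ne i,
\end{equation*}
and set $T(\bm w)\df(u_1,u_2)$. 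The equation above is a scalar quasilinear Dirichlet problem with quadratic gradient growth ($|K_i(x,\xi)|\le\kappa(1+|\xi|^2)$) and a bounded right-hand side (since $w_j$ is pinched between the two given $\Cc^2$ functions). Its solvability in $\Sob^{2,p}(D)\cap\Cc(\bar D)$ for every $p>N$ is exactly the content of the scalar existence theory of Amann--Crandall, once an a priori $\Lp^\infty$ bound is in hand; I would obtain that bound by using $\underline v_i$ and $\bar v_i$ as barriers in the scalar comparison principle.

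The next step is to verify that $T(\cI)\subseteq\cI$. Fix $\bm w\in\cI$. Since $\bar{\bm v}$ is a subsolution of the full system, its $i$-th component satisfies $-\Delta\bar v_i+K_i(x,\grad\bar v_i)+\alpha_i\bar v_i\le\alpha_i\bar v_j$, and because $w_j\le\bar v_j$, the function $\bar v_i$ is a subsolution of the decoupled scalar problem defining $u_i$. The scalar version of \cref{TB.1} (one readily checks that the vector comparison there reduces to the scalar statement when the coupling is absent) then gives $u_i\le\bar v_i$; the opposite bound $\underline v_i\le u_i$ is obtained symmetrically using the supersolution property of $\underline{\bm v}$ and $w_j\ge\underline v_j$. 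This is precisely where the quasi-monotone nature of $\alpha_i(u_i-u_j)$ is decisive: the coupling is non-increasing in the ``other'' variable, so freezing $w_j$ with the correct one-sided bound propagates the order inequality to $u_i$.

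The remaining ingredients for Schauder's theorem are continuity and compactness of $T:\cI\to\cI$, where $\cI$ is viewed as a closed convex subset of $\Cc(\bar D\times\{1,2\})$. Standard $\Sob^{2,p}$ estimates for the scalar problem (using the $\Lp^\infty$ bound on $u_i$ and $w_j$, together with the quadratic gradient growth of $K_i$ and a Bernstein-type argument in the spirit of \cref{P2.1}) give a uniform bound on $\|T(\bm w)\|_{\Sob^{2,p}(D\times\{1,2\})}$ for $p>N$; the compact embedding $\Sob^{2,p}(D)\hookrightarrow\Cc^1(\bar D)$ yields compactness of $T$, while continuity follows from stability of weak $\Sob^{2,p}$ limits in the decoupled quasilinear equation. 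Schauder's theorem then supplies a fixed point $\bm u=T(\bm u)$, which by construction solves the weakly coupled system and lies in the order interval. The main obstacle is the scalar quasilinear existence under quadratic gradient growth, which is the non-trivial ingredient quoted from \cite{AC78}; once that is black-boxed, the coupling is handled cleanly by the quasi-monotone fixed-point scheme just described.
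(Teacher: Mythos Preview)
The paper's own ``proof'' is a single sentence deferring to Amann--Crandall \cite{AC78}; there is no detailed argument to compare against, and you have supplied one where the paper does not.

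Your scheme---freeze $w_j$, solve the decoupled scalar Dirichlet problem via the scalar Amann--Crandall theory, and close with Schauder on the order interval---is a valid route and is the standard way to handle quasi-monotone systems once the scalar theory is in hand. A literal ``mimicking'' of \cite{AC78} would instead run their truncation/iteration directly on the coupled system; your decoupling is a cleaner decomposition that black-boxes the hard scalar step.

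There is, however, a genuine slip in your barrier verification, caused by what is evidently a typo in the statement: as written, $\bar{\bm v}$ is declared a subsolution and $\underline{\bm v}$ a supersolution, but the application in \cref{TB.3} (and the overbar/underbar convention) shows these labels are meant to be swapped. You followed the statement literally, and this breaks your argument. From the subsolution inequality
\[
-\Delta\bar v_i+K_i(x,\grad\bar v_i)+\alpha_i\bar v_i\le\alpha_i\bar v_j
\]
together with $w_j\le\bar v_j$ you \emph{cannot} conclude that $\bar v_i$ is a subsolution of the decoupled problem with right-hand side $\alpha_i w_j$: the inequality $\alpha_i w_j\le\alpha_i\bar v_j$ points the wrong way. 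And even if you could, a subsolution below a solution would give $\bar v_i\le u_i$, not $u_i\le\bar v_i$. With the intended labelling ($\bar{\bm v}$ supersolution, $\underline{\bm v}$ subsolution) your argument becomes correct verbatim: $\bar v_i$ is then a \emph{super}solution of the decoupled scalar problem, scalar comparison gives $u_i\le\bar v_i$, and the lower bound is symmetric.

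A smaller point: compactness of $T$ in $\Cc(\bar D\times\{1,2\})$ requires a uniform $\Sob^{2,p}(D)$ bound up to the boundary, whereas \cref{P2.1} is purely interior. Since you are already invoking \cite{AC78} for solvability of the decoupled scalar problems, you should quote their global estimate directly rather than the interior Bernstein argument.
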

\begin{proof}
This can be established by mimicking the arguments of Amann-Crandall
\cite[Theorem~1]{AC78}.
\end{proof}
Note that \cref{TB.2} can be applied to find the solution for our
model provided the Hamiltonian has at-most quadratic growth in the gradient.
To apply the theorem for a general Hamiltonian we need to introduce
certain approximations.

\begin{lemma}\label{LB.1}
Suppose that $\gamma>2$. Given $C_1>0$, there exists a sequence of increasing $\Cc^{1,1}$ functions $\psi_n:[-C_1, \infty)\to [-C_1, \infty)$ satisfying the following
\begin{itemize}
\item[(i)] $\psi_n(x)\leq x$ for all $x\geq -C_1$,
\item[(ii)] $\psi_n(x)\geq \eta_1 x^{\frac{2}{\gamma}}-\eta_2$,
\item[(iii)] $0\leq \psi^\prime_n(x)\leq 1$,
\end{itemize}
where $\eta_1, \eta_2$ are positive constants independent of $n$. Furthermore,
	$$\sup_{x}\frac{\psi_n(x)}{1+|x|^2}<\infty,$$
and $\psi_n(x)\to x$ as $n\to\infty$, uniformly on compact sets.
\end{lemma}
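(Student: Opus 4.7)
The plan is to construct $\psi_n$ explicitly by gluing the identity on $[-C_1, n]$ to a slowly-growing power function on $(n, \infty)$. Specifically, I would set
\[
\psi_n(x) \df \begin{cases} x & \text{if } -C_1 \le x \le n, \\ n + \tfrac{\gamma}{2}\bigl((1 + x - n)^{2/\gamma} - 1\bigr) & \text{if } x > n. \end{cases}
\]
Both pieces equal $n$ at $x = n$, and their one-sided derivatives agree there (both take value $1$), so $\psi_n \in C^1$. On the right branch $\psi_n'(x) = (1 + x - n)^{2/\gamma - 1}$; since $2/\gamma - 1 < 0$ and $1 + x - n \ge 1$, this lies in $(0, 1]$, giving (iii). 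Because $\psi_n''$ is bounded on each side of $n$ (identically zero on the left, bounded by $|2/\gamma - 1|$ on the right), $\psi_n'$ is Lipschitz and hence $\psi_n \in C^{1,1}$. Finally, the concavity inequality $(1+t)^{2/\gamma} \le 1 + (2/\gamma) t$ for $t \ge 0$ gives $\psi_n(x) \le n + (x - n) = x$ on the right branch, proving (i).

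For the lower bound (ii), the main observation is that $\psi_n$ is nondecreasing in $n$: on the region $x \le n$, $\psi_n(x) = x$ is already the largest value allowed by (i); on $x > n$, a short computation gives $\partial_n \psi_n(x) = 1 - (1 + x - n)^{2/\gamma - 1} \ge 0$ since $1 + x - n > 1$ and the exponent is negative. Consequently, for every $n \ge 1$ and every $x \ge -C_1$, $\psi_n(x) \ge \psi_1(x)$. Direct evaluation yields $\psi_1(x) = \tfrac{\gamma}{2} x^{2/\gamma} - \bigl(\tfrac{\gamma}{2} - 1\bigr)$ for $x \ge 1$, which proves (ii) on $[1, \infty)$ with $\eta_1 = \gamma/2$ and $\eta_2 = \gamma/2 - 1$. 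On the bounded interval $[-C_1, 1]$ the discrepancy between $\psi_n(x) = x$ and $\tfrac{\gamma}{2} x^{2/\gamma}$ is bounded by a constant depending only on $\gamma$ and $C_1$, which can be absorbed by enlarging $\eta_2$. Thus (ii) holds with constants independent of $n$.

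The remaining conclusions are immediate. The quadratic growth bound $\sup_x \psi_n(x)/(1 + |x|^2) < \infty$ follows from $|\psi_n(x)| \le \max(x, C_1)$, which in turn is a consequence of (i) combined with monotonicity of $\psi_n$ and $\psi_n(-C_1) = -C_1$. For uniform convergence on compacts, any compact $K \subset [-C_1, \infty)$ lies in $[-C_1, M]$ for some $M$, and $\psi_n \equiv \mathrm{id}$ on $K$ for every $n \ge M$, so the convergence is in fact eventually exact rather than merely uniform. The one step where real care is needed---and which I regard as the main obstacle---is establishing (ii) with $\eta_1, \eta_2$ genuinely independent of $n$; the monotonicity-in-$n$ argument is the crucial device, since it reduces a family of delicate uniform estimates to a single explicit computation on $\psi_1$.
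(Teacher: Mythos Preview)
Your construction is identical to the paper's---the same piecewise formula gluing the identity on $[-C_1,n]$ to $n-\tfrac{\gamma}{2}+\tfrac{\gamma}{2}(x-n+1)^{2/\gamma}$ on $(n,\infty)$---and your verifications of $C^{1,1}$ regularity, (i), (iii), the sublinear growth, and the uniform convergence on compacts match the paper's. The only difference is in (ii): the paper bounds $\psi_n$ directly via the subadditivity $(a+b)^{2/\gamma}\le a^{2/\gamma}+b^{2/\gamma}$ (with $a=n-1$, $b=x-n+1$), whereas you observe that $n\mapsto\psi_n(x)$ is nondecreasing and reduce to the explicit formula $\psi_1(x)=\tfrac{\gamma}{2}x^{2/\gamma}-(\tfrac{\gamma}{2}-1)$; both arguments are correct, and yours yields the slightly better constant $\eta_1=\gamma/2$ instead of $\eta_1=1$.
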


\begin{proof}
	Define for each $n\in \mathbb{N},$
	\[
	\psi_n(x)=\left\{
	\begin{array}{lll}
		x & \text{for}\; x\leq n,
		\\[2mm]
		n-\frac{\gamma}{2}+\frac{\gamma}{2}\big(x-n+1\big)^{\frac{2}{\gamma}}& \text{for}\; x>n.
	\end{array}
	\right.
	\]
    Differentiating $\psi_n$ we get that
	\[
	\psi^\prime_n(x)=\left\{
	\begin{array}{lll}
		 1 & \text{for}\; x\leq n,
		\\[2mm]
		 \big(x-n+1\big)^{\frac{2}{\gamma}-1} & \text{for}\; x> n.
	\end{array}
	\right.
	\]
(i) and (iii) are obvious. To see (ii), we note that 
$\psi_n(x)\geq x^{\frac{2}{\gamma}}-(1+C_1^{\frac{2}{\gamma}}+C_1)$ for $x\in[-C_1, n]$. For $x>n$ we also note that 
\begin{align*}
n-\frac{\gamma}{2}+\frac{\gamma}{2}\big(x-n+1\big)^{\frac{2}{\gamma}}
&\geq (n-1)^{\frac{2}{\gamma}}+ \big(x-n+1\big)^{\frac{2}{\gamma}}
-\frac{\gamma}{2}
\\
&\geq x^\frac{\gamma}{2} -\frac{\gamma}{2}.
\end{align*}
This gives us (ii).
\end{proof}

We also require the following gradient estimate which follows by
repeating the arguments in the proof of \cref{P2.1}.
\begin{lemma}\label{LB2}
Grant \cref{A1.1}.
Let $\epsilon\in [0,1)$ and $f_1,f_2\in \Cc^1(\Rd)$. 
Let $\bm{u}$ be a $\Cc^2$ function satisfying 
\begin{align*}
-\Delta u_1(x) + \psi_n^1(H_1(x,\grad u_1)) + \alpha_1(x)(u_1(x)-u_2(x))+\varepsilon u_1(x) &=f_1(x)\quad \text{ in}\; \bar{B}_2,
\\
-\Delta u_2(x) +\psi^2_n(H_2(x,\grad u_2)) + \alpha_2(x)(u_2(x)-u_1(x)) + \varepsilon u_2(x) &=f_2(x)\quad \text{ in}\; \bar{B}_2,
\end{align*}
where $\psi^i_n$ is the approximating sequence in \cref{LB.1} if 
$\gamma_i>2$, otherwise $\psi^i_n(x)=x$. Suppose that $B_1\Subset B_2$
and $B_1, B_2$ are concentric.
Then there exists a constant $C>0$, dependent on $\dist(B_1,\partial B_2),\gamma_i$, $d,\eta_1, \eta_2,$ and $\upalpha_0$ but not on $n$ and $\bm{u}$,
 satisfying 
\begin{align*}
&\sup_{B_1}\{[\psi^1_n(H_1(x,\grad u_1))]^{2}, 
[\psi^2_n(H_2(x,\grad u_2))]^{2}\}
\\
&\quad \leq C\big(1+\sup_{B_2}\sum_{i=1}^2(f_i)_+^2
		+ \sup_{B_2}\sum_{i=1}^2|\grad f_i|^{4/3}+ |u_1(0)-u_2(0)|^2+\sup_{B_2}\sum_{i=1}^2(\varepsilon u_i)_{-}^2\big).
	\end{align*}
\end{lemma}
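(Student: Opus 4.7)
The plan is to mimic the Bernstein--type argument from the proof of \cref{P2.1}, with $H_i$ replaced throughout by $\psi^i_n(H_i(x,\grad u_i))$. The three structural properties of $\psi_n$ recorded in \cref{LB.1} ---that $0\leq \psi'_n\leq 1$, that $\psi_n(y)\geq \eta_1 y^{2/\gamma_i}-\eta_2$, and that $\psi_n(y)\geq -C_1$ on $[-C_1,\infty)$--- are exactly what is needed to carry out the argument with all constants independent of $n$. In the easy case $\gamma_i\leq 2$, where $\psi^i_n$ is the identity, the conclusion is just (EP2.1D) from \cref{P2.1} combined with $H_i\leq C_1(|\grad u_i|^{\gamma_i}+1)$, so it suffices to treat the case $\gamma_1,\gamma_2>2$.

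As in \cref{P2.1}, I would assume (after possibly swapping indices) that the maximum of $\eta|\grad u_1|^2$ and $\eta|\grad u_2|^2$ on $\overline B_2$ is attained at some point $x_0$ by $\eta(x_0)|\grad u_1(x_0)|^2$, with $\eta=\rho^{4\gamma/(\gamma-1)}$ and $\rho$ the standard radial cut-off. The chain rule $\grad\psi^1_n(H_1)=(\psi^1_n)'(H_1)\bigl[\grad_x H_1+(D^2u_1)\grad_p H_1\bigr]$ lets the algebraic manipulations of \cref{P2.1} go through verbatim, only with every occurrence of $\grad_{x,p}H_1$ now carrying an extra factor $(\psi^1_n)'(H_1)\in[0,1]$. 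On the LHS, writing the PDE as $\Delta u_1=\psi^1_n(H_1)+\alpha_1(u_1-u_2)+\varepsilon u_1-f_1$ and using the identity of \cref{P2.1} with $t_1=\psi^1_n(H_1)+C_1\geq 0$, together with the bound $(\psi^1_n(H_1)+C_1)^2\geq [\psi^1_n(H_1)]^2-C_1^2$ (which follows from $\psi^1_n(H_1)\geq -C_1$), one obtains
\[
(\Delta u_1)^2\,\geq\,\tfrac{1}{4}[\psi^1_n(H_1)]^2-\tfrac{1}{4}C_1^2-(f_1+C_1)_+^2-\alpha_1^2(u_1-u_2)^2-(\varepsilon u_1)_-^2
\]
in place of the corresponding step in \cref{P2.1}; the $\alpha_1^2(u_1-u_2)^2$ factor at $x_0$ is then reduced to $|u_1(0)-u_2(0)|^2$ plus a controlled fraction of $\theta(x_0)$ by the same mean--value argument used in \cref{P2.1}.

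The main obstacle will be absorbing the top-order cross term $\eta\,(\psi^1_n)'(H_1)|\grad u_1|^{\gamma_1+1}$ coming from $|\grad_xH_1|\leq C_1(1+|\grad u_1|^{\gamma_1})$. In \cref{P2.1} the analogous term was absorbed into $|\grad u_1|^{2\gamma_1}$ via $2\gamma_1>\gamma_1+1$, but here the only coercive quantity on the LHS is $[\psi^1_n(H_1)]^2$. Since $H_1\leq C_1(|\grad u_1|^{\gamma_1}+1)$, the task reduces to the one-variable inequality
\[
(\psi^1_n)'(y)\,y^{(\gamma_1+1)/\gamma_1}\ \leq\ C\bigl(1+[\psi^1_n(y)]^2\bigr)\qquad\text{for every } y\geq 0,
\]
with $C$ independent of $n$. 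For $y\leq n$ this is $y^{(\gamma_1+1)/\gamma_1}\leq C(1+y^2)$, which follows from $(\gamma_1+1)/\gamma_1<2$. For $y>n$, writing $z=y-n+1\geq 1$ and using the explicit formulae of \cref{LB.1}, the LHS is bounded by $C(z^{3/\gamma_1}+n^{(\gamma_1+1)/\gamma_1})$ (since $z^{2/\gamma_1-1}\leq 1$), while $[\psi^1_n(y)]^2$ bounds below $z^{4/\gamma_1}+n^2$ up to a positive constant, and $3/\gamma_1\leq 4/\gamma_1$, $(\gamma_1+1)/\gamma_1\leq 2$ close the case. Young's inequality then absorbs this cross term and the remaining $|\grad f_1||\grad u_1|$ term into $\tfrac{\eta}{8N}[\psi^1_n(H_1)]^2$ at the cost of an $|\grad f_1|^{4/3}$ contribution on the RHS, while the $|\grad\eta|$ and $|\Delta\eta|$ factors are controlled by the exponent $4\gamma/(\gamma-1)$ in $\eta$ exactly as in \cref{P2.1}. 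The case where the maximum is realized by $|\grad u_2|^2$ is handled symmetrically, giving the combined bound stated in the lemma.
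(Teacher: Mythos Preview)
Your proposal is correct and follows exactly the route the paper indicates: the paper does not give a separate proof of this lemma but simply states that it ``follows by repeating the arguments in the proof of \cref{P2.1},'' and you have carried out precisely that adaptation, correctly isolating the three properties of $\psi_n$ from \cref{LB.1} that make the Bernstein computation go through with constants independent of $n$. In particular, your one-variable absorption inequality $(\psi^1_n)'(y)\,y^{(\gamma_1+1)/\gamma_1}\le C(1+[\psi^1_n(y)]^2)$ is the genuine new ingredient relative to \cref{P2.1}, and your case analysis for it is sound.
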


Now we can prove our existence result.
\begin{theorem}\label{TB.3}
Grant \cref{A1.1}.
Suppose $\varepsilon\in[0, 1]$ and $\bm{f}=(f_1, f_2)\in \Cc^1(\bar{D}\times\{1,2\})$. Let $\underline{\bm{v}}\in \Cc^2(\bar{D}\times\{1,2\})$
be a subsolution to 
\begin{equation}\label{ETB.3A}
\begin{split}
-\Delta u_1 + H_1(x, \grad u_1) + \alpha_1(x) (u_1- u_2) + \varepsilon u_1&= f_1
\quad \text{in} \; D,
\\
-\Delta u_2 + H_1(x, \grad u_2) + \alpha_2(x) (u_2- u_1) + \varepsilon u_2&= f_2
\quad \text{in} \; D.
\end{split}
\end{equation}
There there exists a solution $\bm{u}\in \Cc^2(D\times\{1, 2\})$
to \eqref{ETB.3A} satisfying $\bm{u}\geq \underline{\bm{v}}$ in $D$. 
\end{theorem}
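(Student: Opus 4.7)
The plan is to approximate the possibly superquadratic Hamiltonians by ones of at-most quadratic growth via \cref{LB.1}, to invoke the quadratic-growth existence result \cref{TB.2} at each level of the approximation, and then to pass to the limit using the uniform gradient estimate of \cref{LB2}. For $i=1,2$, I set $\psi^i_n=\mathrm{id}$ when $\gamma_i\le 2$, and otherwise take $\psi^i_n$ from \cref{LB.1}, with the constant there equal to the $C_1$ of \eqref{H1} so that $[-C_1,\infty)$ contains the range of $H_i(x,\cdot)$. Since $\psi^i_n(y)\le y$ and $\sup_y\psi^i_n(y)/(1+y^2)<\infty$, the truncated Hamiltonian $\psi^i_n\circ H_i$ grows at most quadratically in $p$; moreover, because $\psi^i_n\le\mathrm{id}$, $\underline{\bm v}$ remains a subsolution of the truncated system.

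Next I would construct a supersolution of the form $\bar{\bm v}_n=\underline{\bm v}+K$. The coupling terms $\alpha_i((\underline v_i+K)-(\underline v_j+K))$ are unchanged by the additive constant, and using $\psi^i_n\le\mathrm{id}$ together with the subsolution property of $\underline{\bm v}$ the required inequality reduces to $\varepsilon K\ge C_*$, where $C_*$ depends only on $\|\underline{\bm v}\|_{\Cc^2(\bar D)}$, $\sup_D|f_i|$, $\upalpha_0$, $C_1$ (but not on $n$). For $\varepsilon>0$ this fixes $K$ uniformly in $n$. In the borderline case $\varepsilon=0$, I would instead solve with an auxiliary parameter $\tilde\varepsilon>0$, obtain solutions $\bm u^{n,\tilde\varepsilon}$ sandwiched by $\underline{\bm v}\le\bm u^{n,\tilde\varepsilon}\le\underline{\bm v}+C_*/\tilde\varepsilon$, and pass $\tilde\varepsilon\to 0$; the lower sandwich $\bm u^{n,\tilde\varepsilon}\ge\underline{\bm v}$ keeps $\sup_D(\tilde\varepsilon u_i^{n,\tilde\varepsilon})_-\le\tilde\varepsilon\|\underline{\bm v}\|_\infty$ bounded uniformly in $\tilde\varepsilon$ and $n$.

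With sub- and supersolutions in hand, \cref{TB.2} (applied after the change of variable $\bm w=\bm u-\underline{\bm v}$ to restore zero Dirichlet data) delivers a solution $\bm u^n$ of the truncated system with $\underline{\bm v}\le\bm u^n\le\underline{\bm v}+K$. The resulting $L^\infty$ sandwich bounds $|u_1^n(z)-u_2^n(z)|$ uniformly, so the right-hand side of \cref{LB2} stays bounded independently of $n$, yielding
\begin{equation*}
\sup_{B_1}[\psi^i_n(H_i(\cdot,\grad u_i^n))]^2\,\le\, C\qquad\text{whenever }\; B_1\Subset B_2\Subset D.
\end{equation*}
Combined with property (ii) of \cref{LB.1} and \eqref{H1}, this yields a uniform local bound on $|\grad u_i^n|$. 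Standard $W^{2,p}$-estimates applied to each scalar equation (treating the other component and the truncated Hamiltonian as lower-order sources) then produce local uniform bounds on $\{\bm u^n\}$ in $\Sobl^{2,p}$ for any $p>N$.

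A diagonal extraction yields a limit $\bm u\in\Sobl^{2,p}(D\times\{1,2\})$ with $\bm u^n\to\bm u$ in $\Cc^{1,\alpha}_{\mathrm{loc}}$. Since $\psi^i_n\to\mathrm{id}$ uniformly on compact intervals and $H_i(\cdot,\grad u_i^n)$ is locally uniformly bounded, $\psi^i_n(H_i(\cdot,\grad u_i^n))\to H_i(\cdot,\grad u_i)$ locally uniformly, so $\bm u$ solves the original system; an elliptic bootstrap lifts $\bm u$ to $\Cc^2(D\times\{1,2\})$, and $\bm u\ge\underline{\bm v}$ passes to the limit. The main obstacle is the combination of (a)~the superquadratic growth of $H_i$ when $\gamma_i>2$, which forces the $\psi^i_n$-truncation, and (b)~the absence of a natural supersolution in the critical case $\varepsilon=0$; the latter requires the auxiliary $\tilde\varepsilon$-regularization, and one must carefully verify that \cref{LB2} remains uniform in the joint limit $\tilde\varepsilon\to 0$, $n\to\infty$, which is ensured by the persistent lower sandwich $\bm u\ge\underline{\bm v}$.
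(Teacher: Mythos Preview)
Your overall architecture---truncate $H_i$ via the $\psi^i_n$ of \cref{LB.1}, invoke the sub/supersolution existence result \cref{TB.2} at each level, and pass to the limit using \cref{LB2}---is exactly the paper's. The difference lies in the supersolution. The paper does \emph{not} use $\underline{\bm v}+K$; instead it solves the \emph{linear} coupled Dirichlet system
\[
-\Delta \bar v_i + \alpha_i(\bar v_i-\bar v_j) + \varepsilon \bar v_i = f_i + \mathrm{const},
\qquad \bar v_i=M\ \text{on }\partial D,
\]
appealing to Sweers' existence result. Because $\psi^i_n\!\circ H_i$ is bounded below by a constant independent of $n$ (from \cref{LB.1}(ii) and \eqref{H1}), this single $\bar{\bm v}$ is a supersolution of every truncated problem, uniformly in $n$ \emph{and} in $\varepsilon\in[0,1]$. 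Thus the paper never needs a separate $\tilde\varepsilon$-regularization for $\varepsilon=0$, and the sandwich $\underline{\bm v}\le\bm w^n\le\bar{\bm v}$ immediately controls $\sup_D|w^n_1-w^n_2|$ so that \cref{LB2} applies directly.

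Your treatment of the borderline $\varepsilon=0$ has two soft spots. First, $\underline{\bm v}$ is a subsolution of the auxiliary $\tilde\varepsilon$-problem only if $\underline{\bm v}\le 0$; in general you must either shift or perturb the right-hand side by $\tilde\varepsilon\|\underline{\bm v}\|_\infty$, and then check that the conclusion $\bm u\ge\underline{\bm v}$ survives the limit. Second, and more importantly, the ``persistent lower sandwich $\bm u\ge\underline{\bm v}$'' controls only $\sup(\tilde\varepsilon u_i)_-$; it does \emph{not} bound $|u_1(z)-u_2(z)|$, which sits on the right-hand side of \cref{LB2}. A joint limit therefore does not go through as stated. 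The repair is to take limits sequentially: for fixed $\tilde\varepsilon$ the blown-up upper barrier $\underline{\bm v}+C_*/\tilde\varepsilon$ still bounds $|u^n_1-u^n_2|$ uniformly in $n$, so $n\to\infty$ is fine; then for $\tilde\varepsilon\to 0$ you are in the \emph{untruncated} setting and may invoke \cref{P2.1}, whose estimate \eqref{EP2.1C} bounds $|u_1-u_2|$ solely in terms of $\sup(\tilde\varepsilon u_i)_-$. This works, but the paper's linear-supersolution device is cleaner and sidesteps the issue entirely.
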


\begin{proof}
The main idea of the proof is to use the existence result from 
\cref{TB.2} by making use of the approximation sequence in \cref{LB.1}.
A similar method was also used by Lions in \cite{Lions80} for scalar equations. In fact, the method of Lions uses more sophisticated tools 
like the Bony maximum principle to obtain an up to the boundary bounds of the
gradient. We do not use such results.
We split the proof in to two steps.

{\bf Step 1.} Fix $n\geq 1$ and consider the system of equations
\begin{equation}\label{ETB.3B}
\begin{split}
-\Delta w_1 + \psi^1_{n}(H_1(x, \grad w_1)) + \alpha_1(x) (w_1- w_2)
+ \varepsilon w_1 &= f_1
\quad \text{in} \; D,
\\
-\Delta w_2 + \psi^2_n(H_1(x, \grad w_2)) + \alpha_2(x) (w_2- w_1)
+ \varepsilon w_2 &= f_2
\quad \text{in} \; D,
\end{split}
\end{equation}
where $\psi^i_n$ is the approximating sequence from \cref{LB.1} if 
$\gamma_i>2$, otherwise $\psi^i_n(x)=x$. By \cref{LB.1}(i), we note that
$\underline{\bm{v}}$ is a subsolution to \eqref{ETB.3B}. So to apply
\cref{TB.2} we need to find a super-solution. Denote by 
$M=\max_{\partial D}\{\underline{v}_1, \underline{v}_2\}$. Let 
$\bar{\bm v}\in \Cc^2(\bar{D}\times\{1,2\})$ be the unique solution to
\begin{equation}\label{ETB.3C}
\begin{split}
-\Delta \bar{v}_1  + \alpha_1(x) (\bar{v}_1- \bar{v}_2)
+ \varepsilon \bar{v}_1 &= f_1 + \eta_2\wedge C_1
\quad \text{in} \; D,
\\
-\Delta \bar{v}_2  + \alpha_2(x) (\bar{v}_2- \bar{v}_1)
+ \varepsilon \bar{v}_2 &= f_2 + \eta_2\wedge C_1
\quad \text{in} \; D,
\\
\bar{v}_1, \bar{v}_2&= M\quad \text{on}\; \partial D,
\end{split}
\end{equation}
where $\eta_2$ is given by \cref{LB.1}(ii). In fact, using 
Sweers \cite[Theorem~1.1]{Sweer}, we can find a unique solution of \eqref{ETB.3C}
in $\Sobl^{2, p}(D)\times \Cc(\bar{D})$ and then using a standard bootstrapping
argument we can improve the regularity. Using \cref{LB.1}(ii) and \eqref{H1}
 we then obtain
from \eqref{ETB.3C} that
\begin{equation*}
\begin{split}
-\Delta \bar{v}_1 +\psi^1_{n}(H_1(x, \grad \bar{v}_1)) + \alpha_1(x) (\bar{v}_1- \bar{v}_2)
+ \varepsilon \bar{v}_1 &\geq f_1 
\quad \text{in} \; D,
\\
-\Delta \bar{v}_2 +\psi^2_{n}(H_2(x, \grad \bar{v}_2)) + \alpha_2(x) (\bar{v}_2- \bar{v}_1)
+ \varepsilon \bar{v}_2 &\geq f_2 
\quad \text{in} \; D,
\\
\bar{v}_1, \bar{v}_2&= M\quad \text{on}\; \partial D.
\end{split}
\end{equation*}
This gives us the super-solution. By \cref{TB.1} we also have
$\underline{\bm v}\leq \bar{\bm v}$ in $\bar{D}$. Now we can
apply \cref{TB.2} to find a solution 
$\bm{w}^n=(w^n_1, w^n_2)\in \Cc^2(D\times\{1,2\})\cap \Cc(\bar{D}\times\{1,2\})$
to \eqref{ETB.3B} satisfying $\underline{\bm v}\leq \bm{w}^n\leq \bar{\bm v}$ in $\bar{D}$ for all $n$. It should also be noted that $\bar{\bm v}$ is
independent of $n$.

{\bf Step 2.} We now pass to the limit in \eqref{ETB.3B} with the help
of the gradient estimate in \cref{LB2}. From step 1 we notice that
$\sup_{D}|w^n_1-w^n_2|<\infty$ uniformly in $n$. Thus, for any compact
$\cK\subset D$ we have $\max_{\cK}\{|\grad w^n_1|, |\grad w^n_2|\}<\infty$
uniformly in $n$, by \cref{LB2}. Using \eqref{ETB.3B} and
standard elliptic estimates, we get 
$$\sup_n\left\{\norm{w^n_1}_{\Sob^{2, p}(\cK)}, \norm{w^n_2}_{\Sob^{2, p}(\cK)}\right\}<\infty\quad \text{for every compact}\; \cK\subset D.$$
Using a standard diagonalization argument we can find a subsequence,
denoted by the actual one, so that $w^n_i\to u_i$ in $\Sobl^{2,p}(D)$
for $p>N$ and $w^n_i\to u_i$ in $\Cc^1_{\rm loc}(D)$, as $n\to\infty$.
Thus passing to the limit in \eqref{ETB.3B} we obtain
\begin{equation*}
\begin{split}
-\Delta u_1 + H_1(x, \grad u_1) + \alpha_1(x) (u_1- u_2)
+ \varepsilon u_1 &= f_1
\quad \text{in} \; D,
\\
-\Delta u_2 + H_1(x, \grad u_2) + \alpha_2(x) (u_2- u_1)
+ \varepsilon u_2 &= f_2
\quad \text{in} \; D,
\end{split}
\end{equation*}
and $\underline{\bm v}\leq \bm{u}\leq \bar{\bm v}$ in $D$. Moreover,
using standard theory of elliptic pde we obtain $\bm{u}\in \Cc^2(D\times\{1, 2\})$. This completes the proof.
\end{proof}

\subsection*{Acknowledgment}
We are very grateful to the referee for the careful reading of our manuscript.
The research of Anup Biswas was supported in part by a Swarnajayanti fellowship and a DST-SERB grant MTR/2018/000028. Prasun Roychowdhury was supported in part by the Council of Scientific \& Industrial Research (File no. 09/936(0182)/2017-EMR-I).

\bibliography{System-Viscous}

\end{document}